\documentclass[a4paper,11pt]{amsart}
\usepackage{amssymb}
\usepackage{amsmath}
\usepackage{cancel}
\usepackage{mathrsfs}
\usepackage{color}
\usepackage{caption}
\usepackage{subcaption}
\usepackage{hyperref}
\usepackage{backref}
\usepackage{enumerate}
\usepackage{mathtools}
\usepackage{comment}
\usepackage{color}
\usepackage{multirow}
\usepackage{bm}

\pdfoutput=1
\usepackage[a4paper,margin=3cm]{geometry}
\DeclareSymbolFont{script}{U}{eus}{m}{n}
\DeclareSymbolFontAlphabet{\amathscr}{script}
\DeclareMathSymbol{\Wedge}{0}{script}{"5E}
\DeclareMathAlphabet{\mathrmsl}{OT1}{cmr}{m}{sl}

\theoremstyle{definition}

\theoremstyle{definition}
\newtheorem{lemma}{Lemma}[section]
\newtheorem{proposition}[lemma]{Proposition}
\newtheorem{theorem}[lemma]{Theorem}
\newtheorem{corollary}[lemma]{Corollary}
\newtheorem{definition}[lemma]{Definition}
\newtheorem{remark}[lemma]{Remark}
\newtheorem{example}[lemma]{Example}

\DeclareMathOperator{\grad}{grad}
\DeclareMathOperator{\Aut}{Aut}
\DeclareMathOperator{\Ric}{Ric}

\DeclareMathOperator{\vol}{vol}

\DeclareMathOperator{\Fut}{Fut}

\makeatletter

\@addtoreset{equation}{section}
\makeatother

\allowdisplaybreaks

\makeatletter
\@namedef{subjclassname@2020}{\textup{2020} Mathematics Subject Classification}
\makeatother

\subjclass[2020]{Primary 53C25; Secondary 14J45, 32Q26.}

\begin{document}

\title[Mabuchi solitons and Mabuchi constants on Fano admissible manifolds]{Mabuchi solitons and Mabuchi constants \\on Fano admissible manifolds}

\author{Shotaro Murayama}
\address{S.\,Murayama\\ Department of Mathematics \\ Graduate School of Science \\Tokyo University of Science}
\email{\href{mailto:1126708@ed.tus.ac.jp}{1126708@ed.tus.ac.jp}}

\author{Yasufumi Nitta}
\address{Y.\,Nitta\\ Department of Mathematics \\ Faculty of Science Division II \\Tokyo University of Science}
\email{\href{mailto:nitta@rs.tus.ac.jp}{nitta@rs.tus.ac.jp}}

\thanks{The second author was supported by JSPS KAKENHI Grant Numbers JP21K03234, 26K06790.}

\begin{abstract}
In this paper, we study the existence of Mabuchi solitons on admissible manifolds as defined by Apostolov--Calderbank--Gauduchon--T\o nnesen-Friedman. 
We prove that a Fano admissible manifold admits a Mabuchi soliton if and only if the Mabuchi constant is less than 1. 
We also provide an explicit formula for the Mabuchi constant on Fano admissible manifolds, which generalizes that of Mabuchi. 
Using this formula, we completely determine the existence and non-existence of Mabuchi solitons on Fano admissible manifolds over the complex projective space $\mathbf{P}^{n}$. 
\end{abstract}

\maketitle


\section{Introduction}\label{sec1_intro}
\subsection{Background}
A \emph{Mabuchi soliton}, introduced by Mabuchi in \cite{Ma01}, is a generalization of a K\"ahler--Einstein metric. 
Let $X$ be an $n$-dimensional Fano manifold, and let $G$ be a maximal compact subgroup of $\Aut^{0}(X)$. 
Let $\omega$ be a $G$-invariant K\"ahler metric on $X$ representing $2\pi c_{1}(X)$. 
By the $dd^{c}$-lemma, there exists a unique function $h_{\omega} \in C^{\infty}(X, \mathbf{R})^{G}$ satisfying 
\[
\Ric(\omega) - \omega = \frac{1}{2}dd^{c}h_{\omega},\quad \int_{X}(1-e^{h_{\omega}})\frac{\omega^{n}}{n!} = 0. 
\]
This function $h_{\omega}$ is called the \emph{Ricci potential} of $\omega$. 

A K\"ahler metric $\omega \in 2\pi c_{1}(X)^{G}$ is called a \emph{Mabuchi soliton} if the gradient 
\[
\grad_{\omega}(1-e^{h_{\omega}}) 
\]
with respect to $\omega$ is a real holomorphic vector field on $X$. 
It is immediate that any $G$-invariant K\"ahler--Einstein metric is a Mabuchi soliton. 
Moreover, a Mabuchi soliton is a K\"ahler--Einstein metric if and only if the Futaki invariant of $X$ vanishes (see \cite[Section 1]{Ma01}). 

An obstruction to the existence of Mabuchi solitons is given by the \emph{Mabuchi constant}. 
Let $P_\omega$ be the space of Killing potentials with respect to $\omega$, and let $\Pi_\omega^{G} \colon C^{\infty}(X,\mathbf{R}) \to P_{\omega}$ be the $L^{2}$-projection. 
Then Mabuchi showed that the constant 
\[
M_{X} \coloneq \max_{X}\Pi_{\omega}^{G}(1-e^{h_{\omega}})
\]
is independent of the choice of $\omega$ (\cite[Section 2]{Ma01}). 
This $M_{X}$ is called the \emph{Mabuchi constant} of $X$. 
The Mabuchi constant satisfies $M_{X} < 1$ if $X$ admits a Mabuchi soliton. 
In this paper, we study a special class of manifolds, called admissible manifolds (\cite[Section 1]{ACGTF08-3}), and prove that the converse holds for Fano admissible manifolds. 
(We also point out that a similar equivalence holds for toric Fano manifolds; see \cite{Y22b} for details) 
Moreover, we give an explicit formula for the Mabuchi constant of Fano admissible manifolds, generalizing \cite[Theorem 6.5]{Ma01}. 
Using this, we completely determine the existence and non-existence of Mabuchi solitons on admissible manifolds over a complex projective space $\mathbf{P}^{n}$. 

\subsection{Main results} 
An \emph{admissible manifold} is a projective bundle $\mathbf{P}_{Y}(E_{0} \oplus E_{\infty}) \to Y$ satisfying the following conditions: 
\begin{itemize}
\item $Y$ is covered by a product $\tilde{Y}\coloneq\prod_{a \in \mathcal{A}}Y_{a}$ ($\mathcal{A} \subset \mathbf{Z}_{>0}, \#\mathcal{A}<\infty$) of simply-connected compact K\"ahler manifolds $(Y_{a}, g_{a}, \omega_{a})$ of real dimension $2d_{a}$, with $(g_{a}, \omega_{a})$ being pullbacks of tensors on $Y$. 
\item $E_{0}$ and $E_{\infty}$ are holomorphic projectively-flat Hermitian vector bundles over $Y$ of ranks $d_{0} + 1$ and $d_{\infty} + 1$, respectively, satisfying 
\[
\frac{c_{1}(E_{\infty})}{d_{\infty}+1} - \frac{c_{1}(E_{0})}{d_{0}+1} = \left[\frac{\omega_{Y}}{2\pi}\right],\quad \omega_{Y} \coloneq \sum_{a \in \mathcal{A}} \varepsilon_{a}\omega_{a}, 
\]
where $\varepsilon_{a} \in \{-1,1\}$ ($a \in \mathcal{A}$). 
\end{itemize}
Assume further that each $\omega_{a}$ is a Ricci positive K\"ahler--Einstein metric with $\Ric(\omega_{a})=\varepsilon_{a}s_{a}\omega_{a}$ for $s_{a} \in \mathbf{R}$. 
Then $X$ is a Fano manifold if and only if 
\begin{align*}
\begin{cases*}
s_{a} > d_0+1 & \text{(if $\varepsilon_{a}=1$)}, \\
s_{a} < -(d_{\infty} + 1)&\text{(if $\varepsilon_{a}=-1$)}
\end{cases*}
\end{align*}
for any $a \in \mathcal{A}$ (\cite[Theorem 3.1]{ACGTF08-4}). 
Such an $X$ is called a \emph{Fano admissible manifold}. 

Our first result shows that, in Fano admissible manifolds, the existence of a Mabuchi soliton can be completely determined by the Mabuchi constant. 

\begin{theorem}[{cf. Corollary \ref{existence_adm_M-sol}}]\label{Main_thm_0}
A Fano admissible manifold $X$ admits a Mabuchi soliton if and only if $M_{X} < 1$. 
\end{theorem}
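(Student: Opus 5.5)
The plan is to reduce the Mabuchi soliton equation on $X$ to an ODE on the momentum interval, using the admissible (Calabi-type) ansatz of Apostolov--Calderbank--Gauduchon--T\o nnesen-Friedman, and then solve that ODE explicitly. Necessity of $M_X<1$ is Mabuchi's general result quoted in the introduction, so only sufficiency needs work.

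\textbf{Step 1: the ansatz.} Fix the fibrewise $S^1$-action and the moment coordinate $z\in[-1,1]$. Up to scaling, an admissible Kähler metric in $2\pi c_1(X)$ has the form
\[
\omega=\sum_{a}\frac{1+x_a z}{x_a}\,\omega_a+\cdots,
\]
where the omitted terms are the fibre contributions. It is determined by the constants $x_a$, fixed by the Kähler class, and a profile function $\Theta(z)$. Write $\Theta(z)=F(z)/p_c(z)$ with
\[
p_c(z)=(1+z)^{d_0}(1-z)^{d_\infty}\prod_a(1+x_az)^{d_a}.
\]
The boundary conditions are $F(\pm1)=0$, $F'(\pm1)=\mp 2p_c(\pm1)$, and $F>0$ on $(-1,1)$.

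\textbf{Step 2: the soliton equation becomes an ODE.} For a Fano class one can choose $x_a$ so that the scalar-curvature-type formula gives $\Ric-\omega=\tfrac12dd^c h$ with $h=h(z)$ expressed through $F'$ and $F$. Concretely, the Ricci form involves $\tfrac12 dd^c\log(F/\text{poly})$, so $h$ is a function of $z$. The reductive part of the automorphism group relevant here contains the fibre $\mathbf{C}^*$. By averaging over $G$, the Killing potentials in $P_\omega^G$ that are functions of $z$ alone are the affine functions $a+bz$; the other components of $\Pi$ vanish by symmetry, since the projection of a $z$-function onto the base Killing potentials integrates to zero.

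The Mabuchi soliton condition is then $1-e^{h}=\ell(z)$, with $\ell$ affine and equal to $\Pi^G_\omega(1-e^{h})$; this $\ell$ is independent of $\omega$. After substitution, this becomes a first-order linear ODE for $F$:
\[
F'(z)-(\text{linear})F(z)\cdot(\ldots)=2\,p_c(z)\,(1-\ell(z))\cdot(\ldots).
\]
More precisely, $e^{h}=(1-\ell)$ turns into
\[
\bigl(F\,q\bigr)'=2(1-\ell(z))\,p_c(z)
\]
for an explicit factor $q$ coming from the volume form; in the Kähler–Einstein case this reduces to the familiar $F'=2p_c\cdot(\ldots)$. Integrating from $-1$ gives
\[
F(z)=\frac{2}{q(z)}\int_{-1}^{z}(1-\ell(t))\,p_c(t)\,dt.
\]
The condition $F(1)=0$ is exactly the statement that $\ell$ is the $L^2$-projection, i.e.\ that $1-\ell$ is orthogonal to constants against $p_c\,dt$. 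The second moment condition, orthogonality to $t$, fixes $\ell$ completely, and the derivative conditions at $\pm1$ follow automatically from $\ell(\pm1)$ being matched.

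\textbf{Step 3: positivity, the main obstacle.} $M_X<1$ means $\max_{[-1,1]}\ell<1$, since $\ell$ is affine and its maximum over $X$ is attained at an endpoint. Hence $1-\ell>0$ on $[-1,1]$ and $p_c>0$ on the open interval by the Fano conditions on $s_a$ and $x_a$. Therefore the integral defining $F$ is positive on $(-1,1)$, and $F$ is a valid profile. The derivative boundary conditions $F'(\pm1)=\mp2p_c(\pm1)$ must still be checked; they require $1-\ell(\pm1)$ to match the normalisation, and I expect this to hold through the choice of the $\frac12 dd^c$ normalisation of $h$ together with $\int(1-e^h)=0$. The hardest part is verifying that the constructed $\Theta$ genuinely yields $\Ric-\omega=\tfrac12dd^c\log(1-\ell)$ with the correct normalisation, and identifying $\ell$ with $\Pi^G_\omega(1-e^{h_\omega})$ so that its maximum is $M_X$. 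Once these are in place, $\grad_\omega\ell$ is a multiple of the Killing generator $J\partial_z$ and is holomorphic, which gives the Mabuchi soliton.
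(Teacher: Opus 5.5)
Your outline, reducing to an ODE for $F=p_\Omega\Theta$ and using $M_X<1$ to make the weight $1-\ell$ positive, is the paper's route through admissible $v$-solitons with $u=1-\ell$ ($p_c$ is a positive multiple of $p_\Omega$). However, the ODE you write down is wrong, and your solvability and positivity steps rest on it. The reduction combines $\Ric(\omega_a)=\varepsilon_a s_a\omega_a$ with the Fano values $c/x_a=s_a+\tfrac{d_\infty-d_0}{2}$. This produces the factor $s_a-c(1/x_a+z)=-\tfrac12(d_0+d_\infty+2)(z-w)$, so $e^{h}=1-\ell$ becomes
\[
\bigl((1-\ell)F\bigr)'=-(d_0+d_\infty+2)\,(z-w)\,(1-\ell)\,p_\Omega .
\]
Thus $q=1-\ell$, and the right-hand side changes sign at $w=\frac{d_0-d_\infty}{d_0+d_\infty+2}\in(-1,1)$. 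With your right-hand side $2(1-\ell)p_c>0$, the product $qF$ increases strictly from $0$, so $F(1)=0$ can never hold. Your claimed equivalence ``$F(1)=0$ iff $1-\ell\perp$ constants'' is also false. The projection property gives $\int\ell\,p_\Omega=\int(1-e^{h})p_\Omega=0$, hence $\int(1-\ell)p_\Omega=\int p_\Omega>0$. This agrees with your own Step 3 ($1-\ell>0$) but contradicts Step 2.

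With the correct equation, $F(1)=0$ is equivalent to $\int_{-1}^{1}(t-w)(1-\ell(t))p_\Omega(t)\,dt=0$, which is the vanishing of the $v$-Futaki invariant for $v=1-\ell$. Use the orthogonality relations $\int(1-e^{h}-\ell)p_\Omega=\int t(1-e^{h}-\ell)p_\Omega=0$ and the normalisation $\int e^{h}p_\Omega=\int p_\Omega$. Then this integral equals $\int_{-1}^{1}(t-w)e^{h(t)}p_\Omega(t)\,dt$. So the missing key input is exactly Proposition~\ref{mommap_normalization}: the same $w$ that appears in the ODE also normalises the moment map. The paper proves it by integrating by parts with the explicit Ricci potential $h=-(d_\infty-d_0)A_\Theta-2cB_\Theta-\log(p_\Omega\Theta)$. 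The boundary terms vanish because $e^{h}p_\Omega\Theta\to0$ at $\pm1$. Nothing in your sketch substitutes for this identity.

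Positivity of $F$ then has to use the sign change. For $x\le w$ integrate from $-1$; for $x>w$ use $F(1)=0$ to write $F$ as an integral from $x$ to $1$. You must also check that $\Theta=F/p_\Omega$ is smooth at $\pm1$ with $\Theta(\pm1)=0$ and $\Theta'(\pm1)=\mp2$. In the blow-down case this means showing that $F$ vanishes to order $d_0+1$ at $-1$ and $d_\infty+1$ at $1$. The endpoint derivatives do not depend on $\ell$, since $F'(\pm1)=(uF)'(\pm1)/u(\pm1)$.

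By contrast, the step you flag as hardest is easy. The constructed metric satisfies $e^{h}=C(1-\ell)$, and $\int\ell\,p_\Omega=0$ forces $C=1$. Hence $1-e^{h}=\ell(z)$ is affine in $z$, so it is a Killing potential and the metric is a Mabuchi soliton.
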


The proof of Theorem \ref{Main_thm_0} is obtained as a consequence of a more general existence theory for \emph{$v$-solitons}, and will be given in Section \ref{sec:ex_v-sol_Fano_adm}. 
In general, the existence of a $v$-soliton is obstructed by the $v$-Futaki invariant (cf. Proposition \ref{v-sol_vanish_vFut}). 
For Fano admissible manifolds, however, we show that the existence of an admissible $v$-soliton is in fact equivalent to the vanishing of the $v$-Futaki invariant (see Theorem \ref{existence_adm_v-sol}). On the other hand, the $v$-Futaki invariant corresponding to Mabuchi solitons always vanishes. 
Therefore, \ref{Main_thm_0} follows immediately from Theorem \ref{existence_adm_v-sol}.

By Theorem \ref{Main_thm_0}, we can determine the existence of Mabuchi solitons on Fano admissible manifolds in terms of the Mabuchi constant. 
We next compute the Mabuchi constant for Fano admissible manifolds. 
Let 
\[
\Omega \coloneqq 2\pi c_{1}(X),\quad x_{a} \coloneq \frac{d_{0} + d_{\infty} + 2}{2s_{a}+d_{\infty} - d_{0}}\quad (a \in \mathcal{A}), 
\]
and define the \emph{characteristic polynomial} $p_{\Omega}$ of $\Omega$ by 
\[
p_{\Omega}(x) \coloneq (1+x)^{d_{0}}(1-x)^{d_{\infty}}\prod_{a \in \mathcal{A}}\left(\frac{\varepsilon_{a}}{x_{a}}+\varepsilon_{a} x\right)^{d_{a}} 
\]
(cf. Section \ref{sec3_characteristic_poly}). 
For $i \in \{0, 1, 2\}$, set 
\[
b_{i} \coloneq \int_{-1}^{1}x^{i}p_{\Omega}(x)\,dx. 
\]

In the case $d_{0} = d_{\infty} = 0$, Mabuchi obtained an explicit formula for $M_{X}$ in terms of $b_{0}$, $b_{1}$, $b_{2}$ (\cite[Theorem 6.5]{Ma01}). 
We extend this to general Fano admissible manifolds as follows.

\begin{theorem}[{cf. Theorem \ref{M-const_proj_bdle}}]\label{Main_thm_1}
Let $w \coloneqq \dfrac{d_{0} - d_{\infty}}{d_{0} + d_{\infty} + 2}$. 
Then 
\begin{align*}
M_{X} 
= \frac{b_{0}|b_{1} - wb_{0}| - b_{1}(b_{1} - wb_{0})}{b_{0}b_{2} - b_{1}^{2}} 
= 1 + \frac{b_{0}(|b_{1} - wb_{0}| - (b_{2} - wb_{1}))}{b_{0}b_{2}-b_{1}^{2}}. 
\end{align*}
\end{theorem}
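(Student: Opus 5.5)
Since $M_X$ does not depend on the choice of $G$-invariant metric in $2\pi c_1(X)$, I will compute it with an admissible metric. In the ACGT framework such a metric is given by a momentum profile $\Theta(z)$ on $[-1,1]$, where $z$ is the moment map of the $S^1$-action on the fibres. Its volume form, pushed forward to $z$, is a constant multiple of $p_\Omega(z)\,dz$.

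The first step is to identify the space of Killing potentials $P_\omega$. It should reduce to the span of $1$ and $z$. Potentials coming from $\Aut^0$ of the base factors $Y_a$, or from fibrewise automorphisms of $\mathbf{P}(E_0)$ and $\mathbf{P}(E_\infty)$, have zero mean on each $z$-level set. Hence they are $L^2$-orthogonal to every function of $z$. Because $e^{h_\omega}$ is a function of $z$ alone for an admissible metric, the projection $\Pi^G_\omega(1-e^{h_\omega})$ is the $L^2(p_\Omega\,dz)$-projection of $1-e^{h_\omega}$ onto affine functions $\alpha+\beta z$.

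The second step is to compute the moments $\int_{-1}^1 e^{h}\,p_\Omega\,dz$ and $\int_{-1}^1 z\,e^{h}\,p_\Omega\,dz$. In the Fano normalization $e^{h_\omega}\omega^n$ is the canonical volume form determined by $\Omega$. Writing $\Ric-\omega$ in terms of $\Theta$, one gets $e^{h}p_\Omega = C\bigl(\Theta p_\Omega\bigr)' + (\text{affine})\cdot p_\Omega$, or more concretely a total derivative plus a multiple of $p_\Omega$. The boundary values are $\Theta(\pm1)=0$ and $\Theta'(\pm1)=\mp2$, and the orders of vanishing $d_0,d_\infty$ of $p_\Omega$ at $\mp1$ enter here. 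This should give
\[
\int_{-1}^1 e^{h}p_\Omega\,dz=b_0,\qquad \int_{-1}^1 z\,e^{h}p_\Omega\,dz = w\,b_0 .
\]
The first identity is just the normalization of $h$. The second is really the statement that the Futaki invariant of the $z$-field is proportional to $b_1-wb_0$, and the shift $w$ comes from the fibre degrees $d_0,d_\infty$.

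The third step is to solve the normal equations. We need $\int(1-e^h-\alpha-\beta z)\,p_\Omega=0$ and $\int(1-e^h-\alpha-\beta z)\,z\,p_\Omega=0$, that is,
\[
\alpha b_0+\beta b_1=0,\qquad \alpha b_1+\beta b_2=b_1-wb_0 .
\]
Solving gives $\beta=\dfrac{b_0(b_1-wb_0)}{b_0b_2-b_1^2}$ and $\alpha=-\dfrac{b_1(b_1-wb_0)}{b_0b_2-b_1^2}$. The denominator is positive by Cauchy--Schwarz. On $[-1,1]$ the maximum of $\alpha+\beta z$ is $\alpha+|\beta|$, which is exactly the first expression in the theorem. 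The second expression follows by the algebraic check that $b_0|b_1-wb_0|-b_1(b_1-wb_0) - (b_0b_2-b_1^2) = b_0\bigl(|b_1-wb_0|-(b_2-wb_1)\bigr)$, which holds because $-b_1^2+wb_0b_1-b_0b_2+b_1^2=-b_0(b_2-wb_1)$.

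The main obstacle is the second step: deriving the ODE for the admissible Ricci potential and carefully tracking the boundary terms to get $\int z\,e^hp_\Omega = wb_0$. A secondary point is justifying that the non-$z$ Killing potentials project trivially. For this I would use $G$-invariance and the fact that those potentials have zero fibre and base averages.
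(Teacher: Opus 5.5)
Your overall plan is the same as the paper's. You work with an admissible metric and use that $\Pi^G_\omega(1-e^{h_\omega})$ is affine in $z$; the paper simply cites \cite[Proposition 6]{ACGTF08-3} and \cite[Theorem 9.1]{Ma21} for this, so your level-set-average sketch is a reasonable stand-in. You then reduce to the two moments of $e^{h}p_\Omega$ and solve the $2\times 2$ system. Your normal equations, their solution, the maximum $\alpha+|\beta|$ and the final algebraic rewriting are all correct.

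The gap is your second step, which is the only place where real content enters. The identity $\int_{-1}^1 z\,e^{h}p_\Omega\,dz = wb_0$ is asserted, not proved, and the mechanism you sketch for it does not work. There is no identity $e^{h}p_\Omega = C(\Theta p_\Omega)' + (\text{affine})\cdot p_\Omega$ for a general admissible profile. The function $e^{h}$ depends on $\Theta$ nonlocally, through primitives of $1/\Theta$ and $z/\Theta$. If such an identity held with a fixed $C\neq 0$ and a fixed affine factor, pairing it with $z$ would give $\int z e^{h}p_\Omega\,dz = -C\int_{-1}^1\Theta p_\Omega\,dz + (\text{a combination of } b_1, b_2)$. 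This varies with $\Theta$, so it cannot equal the $\Theta$-independent value $wb_0$. Appealing to the Futaki invariant does not rescue this as stated either. Knowing it is ``proportional to $b_1-wb_0$'' is not enough, since the exact constant enters $\alpha$ and $\beta$, and hence $M_X$.

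What you need is the paper's Proposition \ref{mommap_normalization}, which rests on the explicit Ricci potential. Start from $\Ric(\omega)=\sum_{a\in\hat{\mathcal{A}}}\Ric(\omega_a)-\frac12 dd^c\log(p_\Omega(z)\Theta(z))$. Use the Fano relation $c/x_a = s_a+(d_\infty-d_0)/2$ for every $a\in\hat{\mathcal{A}}$, together with $d^cz=\Theta(z)\theta$ and $d\theta=\sum_{a}\varepsilon_a\omega_a$. One gets $h=-(d_\infty-d_0)A_\Theta-2cB_\Theta-\log(p_\Omega\Theta)$, where $A_\Theta'=1/\Theta$ and $B_\Theta'=z/\Theta$. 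Since $d_\infty-d_0+2cz=2c(z-w)$, this says
\[
\bigl(e^{h}\Theta p_\Omega\bigr)'=-2c\,(z-w)\,e^{h}p_\Omega ,
\]
so it is $(z-w)e^{h}p_\Omega$, not $e^{h}p_\Omega$, that is an exact derivative.

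The boundary term vanishes because $h$ extends smoothly to $[-1,1]$ while $\Theta p_\Omega$ vanishes at $\pm1$. The smooth extension is where $\Theta'(\pm1)=\mp2$ cancels the poles coming from the vanishing orders $d_\infty$ and $d_0$ of $p_\Omega$ at $+1$ and $-1$ (Lemma \ref{Ricpot_ext}). This gives $\int_{-1}^1(z-w)e^{h}p_\Omega\,dz=0$, and with it the rest of your argument goes through unchanged.
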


Finally, we consider admissible manifolds over the complex projective space $\mathbf{P}^{n}$ and discuss the existence of Mabuchi solitons on them.
Any Fano admissible manifold $X$ over $\mathbf{P}^{n}$ can be written as 
\[
X = \mathbf{P}_{\mathbf{P}^n}(\mathcal{O}^{\oplus(d_{0} + 1)} \oplus \mathcal{O}(k)^{\oplus(d_{\infty} + 1)})
\]
for some $k \in \mathbf{Z}$ satisfying $k \ge 1$ and $n+1>k(d_{0}+1)$. 
Moreover, the existence of a Mabuchi soliton on $X$ is equivalent to the condition $M_{X} < 1$ (see Corollary \ref{existence_adm_M-sol}). 
Using Theorem \ref{Main_thm_1}, we prove the following theorem. 

\begin{theorem}[{cf. Theorem \ref{M-sol_adm/P^n}}]\label{Main_thm_2}
The Fano admissible manifold $X$ admits a Mabuchi soliton if and only if 
\[
(k, d_{\infty})=(1,0)\quad \text{or}\quad (n, k, d_{0}, d_{\infty}) = (1, 1, 0, 1). 
\]
In all other cases, $M_{X} > 1$. 
\end{theorem}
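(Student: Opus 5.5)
By Theorem \ref{Main_thm_1}, the condition $M_{X}<1$ is equivalent to $|b_{1}-wb_{0}|<b_{2}-wb_{1}$, because $b_{0}>0$ (as $p_{\Omega}>0$ on $(-1,1)$) and $b_{0}b_{2}-b_{1}^{2}>0$ (by Cauchy--Schwarz). The inequality splits into two conditions, and combining terms rewrites them as integrals:
\begin{align*}
I_{+} &\coloneqq (b_{2}-wb_{1})+(b_{1}-wb_{0})=\int_{-1}^{1}(1+x)(x-w)\,p_{\Omega}(x)\,dx>0,\\
I_{-} &\coloneqq (b_{2}-wb_{1})-(b_{1}-wb_{0})=\int_{-1}^{1}(1-x)(w-x)\,p_{\Omega}(x)\,dx>0 .
\end{align*}
Similarly, $M_{X}>1$ holds exactly when $\min(I_{+},I_{-})<0$. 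So the whole theorem reduces to a sign analysis of $I_{\pm}$ for the data of $X$.

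I will first put $X=\mathbf{P}_{\mathbf{P}^{n}}(\mathcal{O}^{\oplus(d_{0}+1)}\oplus\mathcal{O}(k)^{\oplus(d_{\infty}+1)})$ into admissible form. Here $\mathcal{A}$ has a single element, $Y=\mathbf{P}^{n}$, and $d_{a}=n$, $\varepsilon_{a}=1$. The metric $\omega_{a}$ is the Fubini--Study metric scaled to represent $2\pi k$ times the hyperplane class, which gives $s_{a}=(n+1)/k$. Then
\[
x_{a}=\frac{k(d_{0}+d_{\infty}+2)}{2(n+1)+k(d_{\infty}-d_{0})},\qquad p_{\Omega}(x)=(1+x)^{d_{0}}(1-x)^{d_{\infty}}\bigl(x_{a}^{-1}+x\bigr)^{n}.
\]
Next I substitute $x=2t-1$ and write $x_{a}^{-1}+x=2(t+\lambda)$ with $\lambda=(x_{a}^{-1}-1)/2>0$. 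I expand $(t+\lambda)^{n}$ binomially and integrate term by term with Beta integrals $\int_{0}^{1}t^{\alpha}(1-t)^{\beta}dt=\alpha!\beta!/(\alpha+\beta+1)!$. This turns $I_{\pm}$ into explicit finite sums in $n,k,d_{0},d_{\infty}$. For the linear factors I use $x-w=2(t-t_{0})$ with $t_{0}=(d_{0}+1)/(d_{0}+d_{\infty}+2)$. The point $t_{0}$ is exactly the barycenter of the weight $t^{d_{0}}(1-t)^{d_{\infty}}$. Hence $\int t^{d_{0}}(1-t)^{d_{\infty}}(t-t_{0})\,dt=0$, so all the mass of $I_{\pm}$ comes from the factor $(t+\lambda)^{n}$ and from the extra factor $t$ or $1-t$.

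The structure I expect is that one of $I_{+},I_{-}$ is automatically positive. Since $(t+\lambda)^{n}$ is increasing, $b_{1}-wb_{0}$ should be positive, and then $I_{+}>0$ trivially. The whole question is then the sign of
\[
I_{-}\propto\int_{0}^{1}t^{d_{0}}(1-t)^{d_{\infty}+1}(t_{0}-t)(t+\lambda)^{n}\,dt .
\]
The kernel $t^{d_{0}}(1-t)^{d_{\infty}+1}(t_{0}-t)$ has zero moment of order $0$ up to a computable correction, and it is positive for small $t$ and negative for large $t$. I will therefore compute $I_{-}$ exactly as a sum over $j$ of $\binom{n}{j}\lambda^{n-j}$ times a rational function of $j,d_{0},d_{\infty}$. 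The aim is to show that every coefficient except those of the first few $j$ is negative, and to treat the remaining low-order terms separately.

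The main obstacle is this final inequality: proving $I_{-}<0$ uniformly in all admissible parameters ($k\ge1$, $n+1>k(d_{0}+1)$), except exactly in the cases $(k,d_{\infty})=(1,0)$ and $(n,k,d_{0},d_{\infty})=(1,1,0,1)$. I plan to check directly that $I_{-}>0$ in those cases; for $k=1$, $d_{\infty}=0$ the manifold is a blow-up type/homogeneous-like case where the sum is explicitly positive. For all other cases I will first try a monotonicity argument. The idea is to show that $I_{-}/\int t^{d_{0}}(1-t)^{d_{\infty}}(t+\lambda)^{n}\,dt$ is monotone in $\lambda$, or equivalently in $k$, and in $d_{\infty}$. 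This would reduce the problem to boundary cases: $k=2$ with $d_{\infty}=0$, and $k=1$ with $d_{\infty}=1$ and $n\ge2$, where the sign can be verified by hand. If monotonicity fails, the fallback is a Chebyshev-type estimate that compares $(t+\lambda)^{n}$ with its tangent line at $t_{0}$.
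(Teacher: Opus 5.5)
Your reduction is correct and is the paper's up to sign. Your $I_{-}$ is $-I$ for the paper's $I=b_{1}-wb_{0}+wb_{1}-b_{2}$, and your $\lambda$ is the paper's $a$. Your covariance argument for $b_{1}-wb_{0}>0$ (the weight $t^{d_{0}}(1-t)^{d_{\infty}}$ has barycenter $t_{0}$, and $(t+\lambda)^{n}$ is increasing) is a valid substitute for the paper's integration by parts, and $I_{+}>0$ then follows from $(1+x)(x-w)=(x-w)^{2}+(1+w)(x-w)$. After that, however, you only have a plan, and the sign of $I_{-}$ over an infinite family of parameters is the entire content of the theorem.

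\textbf{Existence case.} The one concrete claim you make is false: for $k=1$, $d_{\infty}=0$ the sum is not explicitly positive. In
\[
I_{-}\propto\sum_{j=0}^{n}\binom{n}{j}\lambda^{n-j}\bigl[(d_{0}+1)-(d_{\infty}+1)j\bigr]B(j+d_{0}+1,d_{\infty}+3)
\]
every term with $j>d_{0}+1$ is negative, and such $j$ occur as soon as $n\ge d_{0}+2$. For example, $n=3$, $d_{0}=0$ (so $\lambda=3/2$) gives the terms $9/8$, $0$, $-3/20$, $-1/30$. The paper needs real work for this family:
\begin{itemize}
\item an integration by parts making $-I_{-}$ a positive multiple of $\int_{0}^{1}(\lambda+u)^{n-1}u^{d_{0}}(1-u)^{2}\bigl[(n-d_{0}-1)u-\lambda(d_{0}+1)\bigr]\,du$;
\item Jensen's inequality for the concave function $t\mapsto(t+d_{0}+1)/(t+d_{0}+4)$;
\item a Chebyshev sum inequality using that $B(j+d_{0}+1,3)$ decreases in $j$;
\item a final polynomial inequality in $(n,d_{0})$.
\end{itemize}

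\textbf{Non-existence case.} You neither prove the monotonicity in $\lambda$ (or $k$) and in $d_{\infty}$, nor is it a formal consequence of $(t+\lambda)^{n}$ being increasing. The probability measures $\propto t^{d_{0}}(1-t)^{d_{\infty}}(t+\lambda)^{n}\,dt$ are stochastically ordered in $\lambda$, but your test function $(1-t)(t_{0}-t)$ is not monotone on $[0,1]$. Changing $d_{\infty}$ also moves $t_{0}$, the weight and $\lambda$ simultaneously.

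Even granting monotonicity, the boundary families $k=2,\ d_{\infty}=0$ and $k=1,\ d_{\infty}=1,\ n\ge 2$ are still infinite in $(n,d_{0})$, so they cannot be checked by hand. The margins are also thin: $(n,k,d_{0},d_{\infty})=(2,1,0,1)$ gives $I_{-}=-16/405$, so a loose tangent-line comparison is unlikely to close them.

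The missing idea is the paper's Lemma~\ref{inequity}. Write the Beta sum as $(d_{\infty}+1)S_{1}-(d_{0}+1)S_{0}$ and observe that $S_{0}=\lambda L+K$ and $S_{1}=nK$, where
\[
K=\int_{0}^{1}t^{d_{0}+1}(1-t)^{d_{\infty}+2}(\lambda+t)^{n-1}\,dt,\qquad L=\int_{0}^{1}t^{d_{0}}(1-t)^{d_{\infty}+2}(\lambda+t)^{n-1}\,dt.
\]
Chebyshev's integral inequality for the increasing function $(\lambda+t)^{n-1}$ against the weight $t^{d_{0}}(1-t)^{d_{\infty}+2}$ gives $K/L\ge(d_{0}+1)/(d_{0}+d_{\infty}+4)$, strictly for $n\ge 2$. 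This turns into the explicit sufficient condition $\lambda\le\bigl(n(d_{\infty}+1)-(d_{0}+1)\bigr)/(d_{0}+d_{\infty}+4)$ for $M_{X}>1$, with a separate equality check when $n=1$.

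A short case analysis verifies this condition everywhere except $(1,1,0,1)$ and $(2,1,0,1)$, which are settled by direct computation. For $(k,d_{0},d_{\infty})=(2,0,0)$ the condition holds with equality for every $n$, which shows how little room any cruder estimate would have.
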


Theorem \ref{Main_thm_2} can also be interpreted from the viewpoint of stability. 
By the Yau--Tian--Donaldson type correspondence for Mabuchi solitons, established by Han and Li \cite{HL23}, the existence of a Mabuchi soliton on a Fano manifold is equivalent to \emph{uniform relative Ding stability} (see also \cite{Y22b, Nak19, Y22a, Hi23}).
Moreover, by \cite[Theorem 1]{Y22a}, the Mabuchi constant of any relatively Ding semistable Fano manifold is bounded above by 1. 
Combining these results with Theorem \ref{Main_thm_2}, we obtain the following. 

\begin{corollary}\label{D-stab_adm/P^n}
$X$ is uniformly relatively Ding polystable if and only if 
\[
(k, d_{\infty})=(1,0)\quad \text{or}\quad (n, k, d_{0}, d_{\infty}) = (1, 1, 0, 1). 
\]
Otherwise, $X$ is relatively Ding unstable. 
\end{corollary}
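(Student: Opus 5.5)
The corollary follows by combining Theorem \ref{Main_thm_2} with the two external results quoted just before it: the Yau--Tian--Donaldson correspondence of Han--Li \cite{HL23} and Yao's bound \cite[Theorem 1]{Y22a}. I would argue the two directions separately, splitting the parameter set $(n,k,d_0,d_\infty)$ into the exceptional list and its complement.

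\emph{Exceptional cases.} Suppose $(k,d_\infty)=(1,0)$ or $(n,k,d_0,d_\infty)=(1,1,0,1)$. By Theorem \ref{Main_thm_2}, $X$ admits a Mabuchi soliton. By \cite{HL23}, a Fano manifold admits a Mabuchi soliton if and only if it is uniformly relatively Ding polystable. Hence $X$ is uniformly relatively Ding polystable. One point needs checking here: the notion of polystability used in \cite{HL23} is taken relative to a maximal torus, or to the soliton vector field, in $\Aut^0(X)$. I would verify that this matches the $G$-equivariant setup of the present paper, which should be immediate because $G$ is a maximal compact subgroup.

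\emph{Remaining cases.} In every other case, Theorem \ref{Main_thm_2} gives $M_X>1$. Suppose $X$ were relatively Ding semistable. Then \cite[Theorem 1]{Y22a} would give $M_X\le 1$, a contradiction. So $X$ is not relatively Ding semistable, which is exactly the statement that $X$ is relatively Ding unstable.

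It remains to close the ``only if'' direction. Uniform relative Ding polystability implies relative Ding semistability, so in the non-exceptional cases $X$ cannot be uniformly relatively Ding polystable. This completes the equivalence.

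I do not expect any real obstacle, since all the substance sits in Theorem \ref{Main_thm_2}. The only care needed is to make the definitions of relative stability in \cite{HL23} and \cite{Y22a} consistent with each other. In particular, Yao's Mabuchi constant must coincide with the $M_X$ defined here, including its normalization via $\Pi^G_\omega(1-e^{h_\omega})$. I would check this by comparing the definitions directly.
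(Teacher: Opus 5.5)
Your proposal is correct and is essentially the paper's argument. Theorem \ref{Main_thm_2} combined with the Han--Li correspondence \cite{HL23} handles the exceptional cases, and $M_X>1$ together with Yao's bound \cite[Theorem 1]{Y22a} gives relative Ding instability in all other cases, which rules out uniform relative Ding polystability there (as would the Han--Li equivalence directly, since no Mabuchi soliton exists in those cases).
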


\subsection{Organization of the paper}
We start by reviewing basic notions on Mabuchi solitons in Section \ref{sec2_M-sol}. 
The Mabuchi constant is also introduced. 
In Section \ref{sec3_adm_mfds}, following \cite{ACGTF08-3}, we introduce the notion of admissible manifolds. 
After introducing admissible manifolds, we discuss admissible K\"ahler classes and admissible K\"ahler metrics, which are the main tools of this paper, as well as the characteristic polynomial associated with an admissible K\"ahler class. 
In Section \ref{sec: Fano_admissible_manifold}, we focus on those admissible manifolds that are Fano. 
We give an explicit formula for the Ricci potential of admissible K\"ahler metrics on Fano admissible manifolds in Section \ref{sec:Ricci_potential} (cf. Proposition \ref{adm_Ric_pot}). 
This is a refinement of the result of Maschler and T\o nnesen-Friedman \cite[Corollary 3.3]{MTF11}. 
Using Proposition \ref{adm_Ric_pot}, we obtain Proposition \ref{mommap_normalization} for the normalization of the moment map, which plays an important role in the proof of Theorem \ref{Main_thm_1}. 
In Section \ref{sec:v-sol_Fano_adm}, we introduce the notion of $v$-solitons on general Fano manifolds and explain that Mabuchi solitons form a special class of $v$-solitons. 
We also describe the $v$-Futaki invariant, which is an obstruction to the existence of $v$-solitons, and show that, for Fano admissible manifolds, the existence of an admissible $v$-soliton is equivalent to the vanishing of the $v$-Futaki invariant (cf. Theorem \ref{existence_adm_v-sol}). 
From Theorem \ref{existence_adm_v-sol} and the fact that the $v$-Futaki invariant corresponding to Mabuchi solitons always vanishes (see Remark \ref{M-sol_vFut_vanish}), we obtain Theorem \ref{Main_thm_0}.
In Section \ref{prf_Main_thm_1}, we study the Mabuchi constant for Fano admissible manifolds and give its explicit formula (Theorem \ref{Main_thm_1}). 
Finally, in Section \ref{sec_computation}, we prove Theorem \ref{Main_thm_2}. 
The proof proceeds by computing the Mabuchi constant for Fano admissible manifolds over $\mathbf{P}^{n}$ using Theorem \ref{Main_thm_1}, and applying Theorem \ref{Main_thm_0} to determine the existence of Mabuchi solitons. 

\section{Mabuchi solitons and the Mabuchi constant}\label{sec2_M-sol}
We first recall the definitions of Mabuchi solitons and the Mabuchi constant. 
Let $X$ be an $n$-dimensional Fano manifold, and let $G$ be a maximal compact subgroup of $\mathrm{Aut}^{0}(X)$. 
Let $\omega$ be a $G$-invariant K\"ahler metric on $X$ representing $2\pi c_{1}(X)$, and denote by $h_{\omega}$ the Ricci potential of $\omega$. 

\begin{definition}
Let $\omega \in 2\pi c_{1}(X)^{G}$ be a $G$-invariant K\"ahler metric on $X$. 
We say that $\omega$ is a \emph{Mabuchi soliton} if the gradient 
\[
\grad_{\omega}(1-e^{h_{\omega}}) 
\]
with respect to $\omega$ is a real holomorphic vector field on $X$. 
\end{definition}

\begin{example}\label{KE_M-sol}
Let $\omega \in 2\pi c_{1}(X)^{G}$ be a $G$-invariant K\"ahler--Einstein metric on $X$: $\Ric(\omega) = \omega$. 
Then $h_{\omega} = 0$ and hence 
\[
\mathrm{grad}_{\omega}(1-e^{h_{\omega}}) = \mathrm{grad}_{\omega}0 = 0. 
\] 
Therefore, $\omega$ is a Mabuchi soliton. 
\end{example}

\begin{example}[{\cite[Example 5.8]{Ma01}}]
Let $X \coloneqq \mathbf{P}_{\mathbf{P}^{n}}(\mathcal{O}_{\mathbf{P}^{n}}\oplus\mathcal{O}_{\mathbf{P}^{n}}(1))$ be the Fano manifold obtained by blowing up $\mathbf{P}^{n}$ at a point. 
Then $X$ admits a Mabuchi soliton but does not admit a K\"ahler--Einstein metric. 
\end{example}

Next, we explain the Mabuchi constant, which is known as an obstruction to the existence of Mabuchi solitons. 
Let $P_{\omega}$ denote the space of Killing potentials with respect to $\omega$, i.e., the space of real-valued smooth functions $f \in C^{\infty}(X, \mathbf{R})$ such that $\grad_{\omega}f$ is a real holomorphic vector field on $X$. 
Let $\Pi_{\omega}^{G} \colon C^{\infty}(X, \mathbf{R})^{G} \to P_{\omega}$ be the $L^{2}$-projection with respect to $\omega$. 
Then a $G$-invariant K\"ahler metric $\omega \in 2\pi c_{1}(X)^{G}$ is a Mabuchi soliton if and only if $\Pi_{\omega}^{G}(1-e^{h_{\omega}}) = 1-e^{h_{\omega}}$. 
Moreover, Mabuchi showed in \cite{Ma01} that the constant 
\[
\max_{X}\Pi_{\omega}^{G}(1-e^{h_{\omega}})
\]
is independent of the choice of $\omega \in 2\pi c_{1}(X)^{G}$. 

\begin{definition}
We define $M_{X} \coloneqq \max_{X}\Pi_{\omega}^{G}(1-e^{h_{\omega}})$ and call it the \emph{Mabuchi constant} of the Fano manifold $X$. 
\end{definition}

The following examples show that the Mabuchi constant serves as an obstruction to the existence of K\"ahler--Einstein metrics and Mabuchi solitons. 

\begin{example}\label{KE_M-const}
Assume that $2\pi c_{1}(X)^{G}$ contains a K\"ahler--Einstein metric $\omega$. 
Then $h_{\omega} = 0$, and hence 
\[
\Pi_{\omega}^{G}(1-e^{h_{\omega}}) = \Pi_{\omega}^{G}0 = 0. 
\]
Therefore, we have ${\displaystyle M_{X} = \max_{X}\Pi_{\omega}^{G}(1-e^{h_{\omega}}) = 0}$. 
\end{example}

\begin{example}[{\cite[Theorem 3.1]{Ma01}}]\label{mc_obst}
Assume that $2\pi c_{1}(X)^{G}$ contains a Mabuchi soliton $\omega$. 
Then 
\[
\Pi_{\omega}^{G}(1-e^{h_{\omega}}) = 1-e^{h_{\omega}} < 1 
\]
holds on $X$. 
Hence, ${\displaystyle M_{X} = \max_{X}\Pi_{\omega}^{G}(1-e^{h_{\omega}}) < 1}$. 
\end{example}

\section{Admissible manifolds}\label{sec3_adm_mfds}
\subsection{Definition of admissible manifolds}
In this section, following the description in \cite{ACGTF08-3}, we explain the definition of admissible manifolds. For details, we refer the reader to~\cite{ACGTF08-3}. 

\begin{definition}
An \emph{admissible manifold} is a projective bundle $\mathbf{P}_{Y}(E_{0} \oplus E_{\infty}) \to Y$ satisfying the following conditions: 
\begin{itemize}
\item $Y$ is covered by a product $\tilde{Y}\coloneq\prod_{a \in \mathcal{A}}Y_{a}$ ($\mathcal{A} \subset \mathbf{Z}_{>0}, \#\mathcal{A}<\infty$) of simply-connected compact K\"ahler manifolds $(Y_{a}, g_{a}, \omega_{a})$ of real dimension $2d_{a}$, with $(g_{a}, \omega_{a})$ being pullbacks of tensors on $Y$. 
\item $E_{0}$ and $E_{\infty}$ are holomorphic projectively-flat Hermitian vector bundles over $Y$ of ranks $d_{0} + 1$ and $d_{\infty} + 1$, respectively, satisfying 
\begin{align}\label{adm_conn_curv}
\frac{c_{1}(E_{\infty})}{d_{\infty}+1} - \frac{c_{1}(E_{0})}{d_{0}+1} = \left[\frac{\omega_{Y}}{2\pi}\right],\quad \omega_{Y} \coloneq \sum_{a \in \mathcal{A}} \varepsilon_{a}\omega_{a}, 
\end{align}
where $\varepsilon_{a} \in \{-1,1\}$ ($a \in \mathcal{A}$). 
\end{itemize}
\end{definition}
Let $X = \mathbf{P}_{Y}(E_{0} \oplus E_{\infty})$ be an admissible manifold. 
By the condition \eqref{adm_conn_curv} above, there exist Hermitian metrics on $E_{0}$ and $E_{\infty}$ such that the curvatures of their Chern connections are given by $\Omega_{0} \otimes \mathrm{id}_{E_{0}}$, $\Omega_{\infty} \otimes \mathrm{id}_{E_{\infty}}$, and satisfy $\Omega_{\infty} - \Omega_{0} = \omega_{Y}$. 
Let $(g_{0}, \omega_{0})$ and $(g_{\infty}, \omega_{\infty})$ denote the fiberwise Fubini--Study metrics of $\mathbf{P}(E_{0})$ and $\mathbf{P}(E_{\infty})$. 
We normalize them so that their scalar curvatures are $2d_{0}(d_{0}+1)$ and $2d_{\infty}(d_{\infty}+1)$, respectively. 
We also set $\varepsilon_{0} \coloneqq 1$ and $\varepsilon_{\infty} \coloneqq -1$. 

We summarize below the notation for admissible manifolds used in this paper: 
\begin{enumerate}[(1)]
\item We set $\hat{\mathcal{A}} \coloneqq \{a \in \mathcal{A} \cup \{0, \infty\} \mid d_{a} > 0\}$. 
\item Let $\tilde{E}_{0}$ and $\tilde{E}_{\infty}$ be the pullbacks of $E_{0}$ and $E_{\infty}$ to $\tilde{Y}$, respectively. 
Since $\tilde{Y}$ is simply-connected, they can be written as 
\[
\tilde{E}_{0} = \mathcal{E}_{0} \otimes \mathcal{O}^{\oplus (d_{0}+1)},\quad \tilde{E}_{\infty} = \mathcal{E}_{\infty} \otimes \mathcal{O}^{\oplus (d_{\infty}+1)} 
\]
for some holomorphic line bundles $\mathcal{E}_{0}$ and $\mathcal{E}_{\infty}$ over $\tilde{Y}$. 
Let $\mathcal{L} \coloneqq \mathcal{E}_{0}^{-1} \otimes \mathcal{E}_{\infty}$. 
Then $\mathcal{L}$ is a holomorphic line bundle on $\tilde{Y}$, and can be written as 
\[
\mathcal{L} = \bigotimes_{a \in \mathcal{A}}\mathcal{L}_{a}, 
\]
where each $\mathcal{L}_{a}$ is a holomorphic line bundle over $Y_{a}$ satisfying 
\[
c_{1}(\mathcal{L}_{a}) = \left[\frac{\varepsilon_{a}\omega_{a}}{2\pi}\right]. 
\]
\item Let $e_{0} \coloneqq \mathbf{P}_{Y}(E_{0} \oplus 0)$, $e_{\infty} \coloneqq \mathbf{P}_{Y}(0 \oplus E_{\infty})$. 
Their universal covers are, respectively, $Y_{0} \times \tilde{Y} \coloneqq \mathbf{P}^{d_{0}} \times \tilde{Y}$ and $\tilde{Y} \times Y_{\infty} \coloneqq \tilde{Y} \times \mathbf{P}^{d_{\infty}}$. 
\item The blow-up of $X$ along $e_{0} \cup e_{\infty}$ is given by $\hat{X} \coloneqq \mathbf{P}_{\hat{Y}}(\mathcal{O} \oplus \hat{\mathcal{L}}) \to \hat{Y}$, where $\hat{Y} \coloneqq \mathbf{P}_{Y}(E_{0}) \times_{Y} \mathbf{P}_{Y}(E_{\infty}) \to Y$ and $\hat{\mathcal{L}} \coloneqq \mathcal{O}_{E_{0}}(1) \otimes \mathcal{O}_{E_{\infty}}(-1)$. 
In this case, for $\omega_{\hat{Y}} \coloneqq \sum_{a \in \hat{\mathcal{A}}}\varepsilon_{a}\omega_{a}$ we have $c_{1}(\hat{\mathcal{L}}) = [\omega_{\hat{Y}}/2\pi]$. 
We say \emph{a blow-down occurs} if $d_{0}>0$ or $d_{\infty}>0$. 
\item Let $\hat{e}_{0}$ and $\hat{e}_{\infty}$ denote the zero and infinity sections of $\hat{X}$, respectively. 
\end{enumerate}

\subsection{Admissible K\"ahler classes and metrics}\label{sec:adm_Kaehler_met}
In this section, we explain a special class of K\"ahler metrics on admissible manifolds, called \emph{admissible K\"ahler metrics}. 
As in the previous section, we refer the reader to \cite{ACGTF08-3} for further details. 

We begin by describing admissible K\"ahler classes, which serve as the natural setting for admissible K\"ahler metrics. 
Let $X = \mathbf{P}_{Y}(E_{0} \oplus E_{\infty})$ be an admissible manifold. 

\begin{definition}
A K\"ahler class $\Omega$ on $X$ is called an \textbf{admissible K\"ahler class} if its pullback to $\hat{X}$ can be written as 
\begin{align}\label{defn_AKC}
c\left(\sum_{a \in \hat{\mathcal{A}}}\frac{[\varepsilon_{a}\omega_{a}]}{x_{a}} + \hat{\Xi}\right)
\end{align}
for some $c \in \mathbf{R}$, $x_{a} \in \mathbf{R} \setminus \{0\}$ with $x_{0} = 1$, $x_{\infty} = -1$, where $\hat{\Xi}$ is the Poincar\'e dual of $2\pi[\hat{e}_{0} + \hat{e}_{\infty}]$. 
If $c = 1$, the admissible K\"ahler class is said to be \emph{normalized}, or is called a \emph{normalized admissible K\"ahler class}. 
\end{definition}

Let $\Xi$ be a 2-form on $X$ whose pullback to $\hat{X}$ is $[\omega_{0}] - [\omega_{\infty}] + \hat{\Xi}$. 
Then a K\"ahler class $\Omega$ is admissible if and only if 
\[
c\left(\sum_{a \in \mathcal{A}}\frac{[\varepsilon_{a}\omega_{a}]}{x_{a}} + \Xi\right)
\]
for some $c \in \mathbf{R}$ and $x_{a} \in \mathbf{R} \setminus \{0\}$. 

\begin{proposition}[{\cite[p.557]{ACGTF08-3} by the discussion therein}]\label{AKC_equiv}
Let $c \in \mathbf{R}$ and $x_{a} \in \mathbf{R} \setminus \{0\}$. 
Let a cohomology class $\Omega \in H^{1, 1}(X, \mathbf{R})$ be given by \eqref{defn_AKC}. 
Then the following two conditions are equivalent: 
\begin{enumerate}[(1)]
\item $\Omega$ is an admissible K\"ahler class of $X$. 
\item $c \in \mathbf{R}_{>0}$ and $0 < \varepsilon_{a}x_{a} < 1$ for any $a \in \mathcal{A}$. 
\end{enumerate}
\end{proposition}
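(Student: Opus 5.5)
The plan is to prove both implications by restricting the class $\Omega$ to natural test curves and submanifolds, and, for the converse, by writing down explicit Calabi-type admissible metrics in $\Omega$. First we make the structure concrete. On $\hat X=\mathbf{P}_{\hat Y}(\mathcal{O}\oplus\hat{\mathcal L})$ there is the standard $S^1$-action with moment map $z\in[-1,1]$, normalized so that $\hat e_0=\{z=1\}$ and $\hat e_\infty=\{z=-1\}$. The admissible metrics in the class \eqref{defn_AKC} take the form
\[
\omega = c\Big(\sum_{a\in\hat{\mathcal A}} \frac{1+x_a z}{x_a}\,\omega_a + dz\wedge\theta\Big), \qquad g = c\Big(\sum_{a} \frac{1+x_a z}{x_a}\,g_a + \frac{dz^2}{\Theta(z)} + \Theta(z)\theta^2\Big).
\]
Here $\Theta$ is any smooth function that is positive on $(-1,1)$, vanishes at $\pm1$, and satisfies $\Theta'(\pm1)=\mp2$. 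Its cohomology class is exactly \eqref{defn_AKC}, since $d\theta=\sum\varepsilon_a\omega_a$ up to convention. For $a=0,\infty$ the coefficients $(1\pm z)/(\pm1)$ vanish at the appropriate end. This is exactly what lets the metric descend from $\hat X$ to $X$, so that the $\mathbf{P}^{d_0}$ or $\mathbf{P}^{d_\infty}$ fibers collapse onto $e_\infty$, respectively $e_0$.

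For (2)$\Rightarrow$(1), we assume $c>0$ and $0<\varepsilon_a x_a<1$. The key observation is that $\varepsilon_a(1+x_az)/x_a=(1+x_az)/(\varepsilon_a x_a)>0$ for all $z\in[-1,1]$, because $|x_a|<1$. Hence every horizontal factor $\pm\omega_a$, with the sign making it positive, has positive coefficient on the whole closed interval. We choose for instance $\Theta(z)=1-z^2$, which meets the boundary conditions. The resulting form is positive on the open set $z\in(-1,1)$. It extends smoothly across $\hat e_0$ and $\hat e_\infty$, and after blow-down across $e_0$ and $e_\infty$. Away from the collapsing directions this is the standard Calabi ansatz extension argument from \cite{ACGTF08-3}. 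Along the collapsing $\mathbf{P}^{d_0}$ directions, the coefficient $(1+z)$ together with $\Theta$ reproduces, near $z=-1$, a smooth Fubini--Study-type metric on $\mathbf{C}^{d_0+1}$, the cone over $\mathbf{P}^{d_0}$. This uses $\Theta'(-1)=2$ and the normalization of $\omega_0$ with scalar curvature $2d_0(d_0+1)$. So $\Omega$ contains a Kähler form, and its pullback has the form \eqref{defn_AKC}; hence $\Omega$ is an admissible Kähler class.

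For (1)$\Rightarrow$(2), we assume $\Omega$ is Kähler and test it against curves. For a fiber line $\ell$ of $X\to Y$, consider $\mathbf{P}^1\subset\mathbf{P}(\mathbb{C}\oplus\mathbb{C})$ joining $e_0$ and $e_\infty$. Then $\Omega\cdot\ell$ equals $c\,\hat\Xi\cdot\hat\ell$, which is $2\pi c$ up to a positive normalization, since the classes $\omega_a$ restrict trivially to fibers. Positivity gives $c>0$. Next, for each $a\in\mathcal A$ take a curve $C\subset Y_a$, lifted into the section $\hat e_0$, respectively $\hat e_\infty$, and pushed down to $X$. The restriction of $\hat\Xi$ to $\hat e_0$ is $2\pi c_1$ of the normal bundle, that is $\pm[\omega_{\hat Y}]$. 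Restricted to the $Y_a$ factor, this gives $\Omega|_{Y_a\subset e_0}=c\,(1/x_a+1)[\varepsilon_a\omega_a]$ and $\Omega|_{Y_a\subset e_\infty}=c\,(1/x_a-1)[\varepsilon_a\omega_a]$. Equivalently, these are the coefficients $(1+x_az)/x_a$ at $z=\pm1$. Since $\Omega$ is Kähler and $[\omega_a]$ is positive on $Y_a$, both $\varepsilon_a(1+x_a)/x_a>0$ and $\varepsilon_a(1-x_a)/x_a>0$. Multiplying gives $(1-x_a^2)/x_a^2>0$, so $|x_a|<1$, and then either inequality forces $\varepsilon_a x_a>0$. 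Hence $0<\varepsilon_ax_a<1$.

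The main obstacle is the sign bookkeeping in the restriction of $\hat\Xi$ to $\hat e_0$ and $\hat e_\infty$, namely which of $\pm\omega_{\hat Y}$ appears with which orientation convention. I would fix it by checking against the Hirzebruch-surface case $Y=\mathbf{P}^1$ and $d_0=d_\infty=0$, where the classes are known explicitly. The smooth extension across the blow-down loci is standard, and I would cite \cite{ACGTF08-3} for it.
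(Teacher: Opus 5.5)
Your proof is correct. The paper gives no argument of its own and only cites the discussion in \cite{ACGTF08-3}. The content of that discussion is your (2)$\Rightarrow$(1): the explicit form $c\bigl(\sum_{a}\frac{1+x_az}{x_a}\varepsilon_a\omega_a+dz\wedge\theta\bigr)$ is positive exactly when $c>0$ and $\varepsilon_a(1+x_az)/x_a>0$ on $[-1,1]$, and any profile with the boundary conditions (e.g.\ $\Theta=1-z^2$) turns it into an admissible K\"ahler metric in $\Omega$.

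Your (1)$\Rightarrow$(2) takes a genuinely different route. Positivity of the explicit form constrains $c$ and $x_a$ only for classes already known to contain an admissible metric. The paper's definition of an admissible class, however, assumes only that $\Omega$ is K\"ahler and has the shape \eqref{defn_AKC}. You pair $\Omega$ with the strict transform of a fibre line, giving $4\pi c>0$, and with $Y_a$-slices in $e_0$ and $e_\infty$, giving $c\,\varepsilon_a(1\pm x_a)/x_a>0$. This extracts the constraints from K\"ahlerness alone, which is exactly what that definition requires, so your argument supplies a step the citation leaves implicit.

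Two minor simplifications. First, the sign issue you flag needs no Hirzebruch check. The closed form above is smooth on $\hat X$ and represents \eqref{defn_AKC} for every value of $c$ and $x_a$; positivity plays no role. Restricting it at $z=\pm1$ gives $\hat\Xi|_{\hat e_0}=[\omega_{\hat Y}]$ and $\hat\Xi|_{\hat e_\infty}=-[\omega_{\hat Y}]$ directly. Second, when $Y$ is only covered by $\tilde Y=\prod_a Y_a$, first pull back to the finite cover $\mathbf{P}_{\tilde Y}(\tilde E_0\oplus\tilde E_\infty)$ of $X$, where K\"ahler classes stay K\"ahler and the slices $Y_a\times\{\mathrm{pt}\}$ actually exist. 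You can then pair with $[\omega_a]^{d_a-1}$ on the slice instead of choosing curves.
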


Let $\Omega$ be an admissible K\"ahler class of $X$ given by \eqref{defn_AKC}. 
By Proposition \ref{AKC_equiv}, $c \in \mathbf{R}_{>0}$ and $0 < \varepsilon_{a}x_{a} < 1$ for any $a \in \mathcal{A}$. 
The scalar multiplication on the fiber of $E_{0}$ induces a $\mathbf{C}^{\ast}$-action on $X$. 
By identifying $X_{0} \coloneqq X \setminus (e_{0} \cup e_{\infty}) \cong \hat{X} \setminus (\hat{e}_{0} \cup \hat{e}_{\infty})$ with an open subset of $\hat{X}$, $X_{0}$ is a principal $\mathbf{C}^{\ast}$-bundle over $\hat{Y}$ associated to the holomorphic line bundle $\hat{\mathcal{L}}$. 

We now consider the $S^{1}$-action on $X$ obtained by restricting the $\mathbf{C}^{\ast}$-action. 
The Hermitian metrics on $E_{0}$ and $E_{\infty}$ induce a fiberwise moment map $z \colon X \to [-1, 1]$. 
The critical submanifolds of $z$ are given by 
\[
z^{-1}(\{1\}) = e_{0},\quad z^{-1}(\{-1\}) = e_{\infty}. 
\]
(A concrete construction of $z$ can be found in \cite[Section 2.1]{ACGTF08-3}) 
Let $T$ denote the infinitesimal generator of the $S^{1}$-action on $X$. 
Let $\theta$ be a connection form on $X_{0}$ satisfying 
\begin{align}\label{curvature_of_conn_form}
d\theta = \sum_{a \in \hat{\mathcal{A}}}\varepsilon_{a}\omega_{a}. 
\end{align}

\begin{definition}
A K\"ahler metric $\omega \in \Omega$ on $X$ is called an \emph{admissible K\"ahler metric} if 
\begin{align}\label{defn_adm_met}
\begin{split}
&g = c\left(\sum_{a \in \hat{\mathcal{A}}}\frac{1+x_{a}z}{x_{a}}\varepsilon_{a}g_{a} + \frac{dz^{2}}{\Theta(z)} + \Theta(z)\theta^{2}\right), \\
&\omega = c\left(\sum_{a \in \hat{\mathcal{A}}}\frac{1+x_{a}z}{x_{a}}\varepsilon_{a}\omega_{a} + dz \wedge \theta \right)
\end{split}
\end{align}
on $X_{0}$, where $g$ is the Riemannian metric associated to $\omega$, and $\Theta$ is a smooth function on $[-1, 1]$ satisfying 
\begin{enumerate}[(1)]
\item $\Theta(x) > 0$ ($x \in (-1, 1)$), 
\item $\Theta(\pm 1) = 0$, $\Theta'(\pm 1) = \mp 2$. 
\end{enumerate}
If $c = 1$, the admissible K\"ahler metric $\omega$ is said to be \emph{normalized}. 
\end{definition}

Let $\omega \in \Omega$ be an admissible K\"ahler metric in the class $\Omega$ defined by \eqref{defn_adm_met}. 
It is immediate that $z$ is a moment map for $\dfrac{1}{c}\omega$: 
\[
-dz = \iota_{T}\dfrac{1}{c}\omega. 
\]
Moreover, the complex structure on $X$ corresponding to this K\"ahler structure is obtained by taking the pullback of the complex structures on each $Y_{a}$ ($a \in \mathcal{A}$), together with the condition $d^{c}z = \Theta(z)\theta$. 

\begin{remark}
Let $\omega$ be a K\"ahler metric on $X$, and suppose that it can be written in the form \eqref{defn_adm_met} on $X_{0}$.  
Then, using \eqref{curvature_of_conn_form}, one can show that $\omega$ belongs to $\Omega$. 
Furthermore, the 2-form $d(z\theta)$, regarded as a differential form on $X_{0} \subset \hat{X}$, extends to the whole of $\hat{X}$, and its de Rham cohomology class coincides with $\hat{\Xi}$. 
For details, see \cite[pp. 556--557]{ACGTF08-3}. 
\end{remark}

\subsection{Characteristic polynomials}\label{sec3_characteristic_poly}
In this section, we explain the characteristic polynomial associated with an admissible Kähler class. 
As will be stated in Proposition \ref{prop_chara_poly} below, this polynomial coincides, up to a constant multiple, with the \emph{Duistermaat--Heckman polynomial} associated with the $S^{1}$-action on $X$. 

Let $X = \mathbf{P}_{Y}(E_{0} \oplus E_{\infty})$ be a $d$-dimensional admissible manifold, and let $\Omega$ be an admissible K\"ahler class on $X$ given by \eqref{defn_AKC}. 

\begin{definition}
Let $\lambda_{a} \coloneq \varepsilon_{a}/x_{a}$ for each $a \in \hat{\mathcal{A}}$ (note that $\lambda_{0} = \lambda_{\infty} = 1$), and define 
\begin{align*}
p_{\Omega}(x) 
\coloneqq \prod_{a \in \hat{\mathcal{A}}}(\lambda_{a} + \varepsilon_{a}z)^{d_{a}} 
= (1 + x)^{d_{0}}(1 - x)^{d_{\infty}}\prod_{a \in \mathcal{A}}(\lambda_{a} + \varepsilon_{a}x)^{d_{a}}. 
\end{align*}
We call $p_{\Omega}$ the \emph{characteristic polynomial} of the admissible K\"ahler class $\Omega$. 
\end{definition}

A careful reading of the proof of \cite[Proposition 6]{ACGTF08-3} yields the following proposition. 

\begin{proposition}\label{prop_chara_poly}
Let $\omega \in \Omega$ be an admissible K\"ahler metric on $X$ given by \eqref{defn_adm_met}. 
Then 
\begin{align*}
\int_{X}f(z)\frac{\omega^{d}}{d!} = 2\pi c^{d}\left(\prod_{a \in \hat{\mathcal{A}}}\vol(Y_{a}, \omega_{a})\right)\int_{-1}^{1}f(x)p_{\Omega}(x)\,dx
\end{align*}
for any integrable function $f$ on the closed interval $[-1, 1]$. 
\end{proposition}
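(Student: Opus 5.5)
The plan is a local computation on the dense open set $X_{0}$, followed by a fibration (Fubini) argument over $\hat{Y}$. The complement $e_{0}\cup e_{\infty}$ has measure zero, so the integral of $f(z)\,\omega^{d}/d!$ over $X$ equals the integral over $X_{0}$. On $X_{0}$ I use the explicit form \eqref{defn_adm_met}. Write $\omega = c(\beta + dz\wedge\theta)$ with
\[
\beta = \sum_{a\in\hat{\mathcal{A}}}\frac{1+x_{a}z}{x_{a}}\varepsilon_{a}\omega_{a}
= \sum_{a\in\hat{\mathcal{A}}}(\lambda_{a}+\varepsilon_{a}z)\,\omega_{a},
\]
using $\varepsilon_{a}/x_{a}=\lambda_{a}$ and $\varepsilon_{a}^{2}=1$.

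The first step is to expand the top power. Since $(dz\wedge\theta)^{2}=0$,
\[
\omega^{d} = c^{d}\, d\,\beta^{d-1}\wedge dz\wedge\theta,
\qquad d-1=\sum_{a\in\hat{\mathcal{A}}} d_{a}.
\]
The forms $\omega_{a}$ live on different factors, so only the product term survives in $\beta^{d-1}$:
\[
\beta^{d-1} = \frac{(d-1)!}{\prod_{a} d_{a}!}\prod_{a\in\hat{\mathcal{A}}}(\lambda_{a}+\varepsilon_{a}z)^{d_{a}}\,\omega_{a}^{d_{a}}.
\]
Together these give
\[
\frac{\omega^{d}}{d!} = c^{d}\, p_{\Omega}(z)\,\Bigl(\bigwedge_{a}\frac{\omega_{a}^{d_{a}}}{d_{a}!}\Bigr)\wedge dz\wedge\theta,
\]
where for $a=0,\infty$ the forms $\omega_{a}$ are the fibrewise Fubini--Study forms. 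They enter $\omega$ through $\hat{Y}$ and are handled by the same formula.

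The second step is to integrate. $X_{0}$ is a principal $\mathbf{C}^{\ast}$-bundle over $\hat{Y}$, and $(z,t)$ serve as fibre coordinates, with $z\in(-1,1)$ and $t$ the $S^{1}$-angle satisfying $\theta(T)=1$. Integrating first over the fibre gives $\int_{S^{1}}\theta = 2\pi$ and $\int_{-1}^{1} f(z)p_{\Omega}(z)\,dz$. Integrating over the base gives $\int_{\hat{Y}}\bigwedge_{a}\omega_{a}^{d_{a}}/d_{a}! = \prod_{a}\vol(Y_{a},\omega_{a})$. This uses that $\hat{Y}$ is locally (via the covering of $Y$ and the projective bundles $\mathbf{P}(E_{0}),\mathbf{P}(E_{\infty})$) a product whose volume form is this wedge.

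The step I expect to be the main obstacle is this base-volume identification. $\hat{Y}$ is only a quotient of, and fibred over, the product $\tilde{Y}\times\mathbf{P}^{d_{0}}\times\mathbf{P}^{d_{\infty}}$. The factors $\vol(Y_{a},\omega_{a})$ for $a\in\mathcal{A}$ must therefore be read as volumes of the tensors on $Y$, or equivalently the fibration identity $\int_{\hat{Y}} = \int_{Y}\int_{\text{fibre}}$ must be combined with the fact that the fibrewise Fubini--Study volumes are constant. I would handle this by working with the integral over $Y$ of the pushed-forward density, following the setup in \cite[Proposition 6]{ACGTF08-3}. The remaining points are routine: the orientation sign of $dz\wedge\theta$ relative to the complex orientation (fixed by positivity of $\omega$), and the measure-zero claim for $e_{0}\cup e_{\infty}$. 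For integrable rather than smooth $f$, the argument goes through unchanged by Fubini's theorem.
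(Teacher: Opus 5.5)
Your argument is correct and matches the paper's approach. The paper gives no proof of its own and defers to the proof of \cite[Proposition 6]{ACGTF08-3}, which rests on this same identity $\omega^{d}/d! = c^{d}p_{\Omega}(z)\bigwedge_{a}(\omega_{a}^{d_{a}}/d_{a}!)\wedge dz\wedge\theta$ on $X_{0}$, followed by integration over the $\mathbf{C}^{\ast}$-fibres and then over $\hat{Y}$; note that the dropped term $\beta^{d}$ vanishes for the same pigeonhole reason you use for $\beta^{d-1}$. Your caveat about the base is also justified: when $Y\neq\tilde{Y}$ the integral over $\hat{Y}$ is $\prod_{a}\vol(Y_{a},\omega_{a})$ divided by the finite degree of $\tilde{Y}\to Y$, which changes only the $f$-independent constant, and in the Fano setting where the result is used one has $Y=\tilde{Y}$ and $\hat{Y}=Y\times\mathbf{P}^{d_{0}}\times\mathbf{P}^{d_{\infty}}$, so the formula holds exactly there.
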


\section{Fano admissible manifolds}\label{sec: Fano_admissible_manifold}
\subsection{Definition of Fano admissible manifolds}
In this section we explain a necessary and sufficient condition for an admissible manifold to be Fano, following \cite{ACGTF08-4}. 
We here assume that $Y = \tilde{Y} = \prod_{a \in \mathcal{A}}Y_{a}$ and each $\omega_{a}$ is a Ricci positive K\"ahler--Einstein metric on $Y_{a}$: $\Ric(\omega_{a}) = \varepsilon_{a}s_{a}\omega_{a}$ ($a \in \mathcal{A}$). 
Let $X$ be an admissible manifold over $Y$. 
Since $Y$ is simply-connected, $X$ can be written as 
\[
X = \mathbf{P}_{Y}(\mathcal{O}^{\oplus(d_{0}+1)} \oplus (\mathcal{L} \otimes \mathcal{O}^{\oplus(d_{\infty}+1)})), 
\]
where $\mathcal{L}$ is a holomorphic line bundle over $Y$ satisfying $\mathcal{L} = \otimes_{a \in \mathcal{A}}\mathcal{L}_{a}$ with each $\mathcal{L}_{a}$ a holomorphic line bundles over $Y_{a}$ satisfying $c_{1}(\mathcal{L}_{a}) = [\varepsilon_{a}\omega_{a}/2\pi]$. 

\begin{theorem}[{\cite[Theorem 3.1]{ACGTF08-4}}]\label{adm_antican_bdle}
For $X$, the following conditions are equivalent: 
\begin{enumerate}[(1)]
\item $X$ is a Fano manifold. 
\item For each $a \in \mathcal{A}$, 
\[
\begin{cases}
s_{a} > d_{0} + 1 & (\text{if }\varepsilon_{a} = 1), \\
s_{a} < -(d_{\infty} + 1) & (\text{if }\varepsilon_{a} = -1). 
\end{cases}
\]
\end{enumerate}
Moreover, if $X$ satisfies these conditions, then $2\pi c_{1}(X)$ is an admissible K\"ahler class, and satisfies 
\begin{align}\label{defn_Fano_AKC}
2\pi c_{1}(X) = c\left[\left(\sum_{a \in \mathcal{A}}\frac{[\varepsilon_{a}\omega_{a}]}{x_{a}}\right) + \Xi\right]
\end{align} 
for 
\begin{align}\label{Fano_cond}
c = \frac{d_{0} + d_{\infty} + 2}{2},\quad x_{a} = \frac{d_{0} + d_{\infty} +2}{2s_{a} + d_{\infty} - d_{0}}\quad (a \in \mathcal{A}). 
\end{align}
\end{theorem}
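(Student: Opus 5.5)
The plan is to compute $2\pi c_{1}(X)$ explicitly, show that it has the admissible form \eqref{defn_AKC} with the constants \eqref{Fano_cond}, and then read off the Fano condition from Proposition \ref{AKC_equiv}. Since $X$ is Fano exactly when $2\pi c_{1}(X)$ is a K\"ahler class, once $2\pi c_{1}(X)$ is written in the form \eqref{defn_AKC}, Proposition \ref{AKC_equiv} says this happens if and only if $c>0$ and $0<\varepsilon_{a}x_{a}<1$ for all $a\in\mathcal{A}$.

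\textbf{Step 1: compute on $\hat{X}$.} I would work on the blow-up $\hat{X}=\mathbf{P}_{\hat{Y}}(\mathcal{O}\oplus\hat{\mathcal{L}})$. For a $\mathbf{P}^{1}$-bundle of this type, the standard formula is
\[
c_{1}(\hat{X}) = \pi^{\ast}c_{1}(\hat{Y}) + [\hat{e}_{0}] + [\hat{e}_{\infty}].
\]
I then write $c_{1}(\hat{Y})$ in terms of the $[\omega_{a}]$.
\begin{itemize}
\item The base factors contribute $\sum_{a\in\mathcal{A}} \varepsilon_{a}s_{a}[\omega_{a}]/2\pi$, using $\Ric(\omega_{a})=\varepsilon_{a}s_{a}\omega_{a}$.
\item The fibres $\mathbf{P}(E_{0})$ and $\mathbf{P}(E_{\infty})$ contribute $(d_{0}+1)[\omega_{0}]/2\pi$ and $(d_{\infty}+1)[\omega_{\infty}]/2\pi$. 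Here I use the normalization $\Ric(\omega_{0})=(d_{0}+1)\omega_{0}$, and similarly for $\omega_{\infty}$.
\item The twisting of $E_{\infty}=\mathcal{L}\otimes\mathcal{O}^{\oplus(d_{\infty}+1)}$ must be accounted for through the Chern connections, whose curvatures satisfy $\Omega_{\infty}-\Omega_{0}=\omega_{Y}$. This may shift the base coefficients by multiples of $\varepsilon_{a}[\omega_{a}]$.
\end{itemize}

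\textbf{Step 2: descend to $X$ and symmetrize.} I would use the blow-up formula for canonical bundles. The map $\hat{X}\to X$ blows up $e_{0}$, of codimension $d_{\infty}+1$, and $e_{\infty}$, of codimension $d_{0}+1$, with exceptional divisors $\hat{e}_{0}$ and $\hat{e}_{\infty}$. So the pullback of $c_{1}(X)$ equals $c_{1}(\hat{X})$ plus $d_{\infty}[\hat{e}_{0}]+d_{0}[\hat{e}_{\infty}]$. I then rewrite the asymmetric combination of $[\hat{e}_{0}]$ and $[\hat{e}_{\infty}]$ as a symmetric part $\tfrac{d_{0}+d_{\infty}+2}{2}([\hat{e}_{0}]+[\hat{e}_{\infty}])$ plus a multiple of $[\hat{e}_{0}]-[\hat{e}_{\infty}]$. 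The difference $[\hat{e}_{0}]-[\hat{e}_{\infty}]$ is, up to sign, $\pi^{\ast}c_{1}(\hat{\mathcal{L}}) = \sum_{a\in\hat{\mathcal{A}}}\varepsilon_{a}[\omega_{a}]/2\pi$, so it can be absorbed into the base terms. Comparing with \eqref{defn_AKC}, I expect to obtain $c=(d_{0}+d_{\infty}+2)/2$ and $c/x_{a} = s_{a}+\tfrac{d_{\infty}-d_{0}}{2}$, which is \eqref{Fano_cond}. The $a=0,\infty$ terms should come out automatically with $x_{0}=1$ and $x_{\infty}=-1$, which serves as a consistency check.

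\textbf{Step 3: translate the K\"ahler condition.} Here $c>0$ holds automatically. If $\varepsilon_{a}=1$, the condition $0<x_{a}<1$ is equivalent to $2s_{a}+d_{\infty}-d_{0}>d_{0}+d_{\infty}+2$, that is, $s_{a}>d_{0}+1$. If $\varepsilon_{a}=-1$, the condition $0<-x_{a}<1$ is equivalent to $-(2s_{a}+d_{\infty}-d_{0})>d_{0}+d_{\infty}+2$, that is, $s_{a}<-(d_{\infty}+1)$. This gives the equivalence of (1) and (2), and the "moreover" statement follows from the formula obtained in Step 2.

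\textbf{Main obstacle.} I expect the bookkeeping in Steps 1 and 2 to be the hardest part. This includes the signs in the relation between $[\hat{e}_{0}]-[\hat{e}_{\infty}]$ and $c_{1}(\hat{\mathcal{L}})$, which divisor lies over which blow-up centre, and the contribution of the twisted projectivizations to $c_{1}(\hat{Y})$. I would first check the formulas in the case $d_{0}=d_{\infty}=0$, where no blow-down occurs and one should recover $x_{a}=1/s_{a}$. I would then test them on $\mathbf{P}_{\mathbf{P}^{n}}(\mathcal{O}\oplus\mathcal{O}(1))$ to fix the sign conventions before doing the general case.
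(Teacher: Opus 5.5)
Your proposal is correct, and it takes a genuinely different route from the one behind this statement. The paper gives no proof here; it quotes \cite{ACGTF08-4}, which works with the Ricci form of an admissible metric (their (3.2), quoted in Section~\ref{sec:Ricci_potential}).

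In that approach, for any admissible metric one has $\Ric(\omega)=\sum_{a\in\hat{\mathcal A}}\Ric(\omega_a)-\frac12 dd^c\log F(z)$ with $F=p_\Omega\Theta$. The last term is $d(f(z)\theta)$ with $f=-\frac12F'/p_\Omega$. Its class depends only on the endpoint values $f(1)=d_\infty+1$ and $f(-1)=-(d_0+1)$. These values are forced by $\Theta(\pm1)=0$, $\Theta'(\pm1)=\mp2$ and the vanishing orders of $p_\Omega$, and they give $\frac{d_\infty-d_0}{2}[\omega_{\hat Y}]+\frac{d_0+d_\infty+2}{2}\hat\Xi$.

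Your $\mathbf P^1$-bundle formula plus the discrepancies $d_\infty[\hat e_0]+d_0[\hat e_\infty]$ produces the same coefficients $d_\infty+1$ and $d_0+1$ on $[\hat e_0]$ and $[\hat e_\infty]$. It needs no metric, and it explains the shift $\frac{d_\infty-d_0}{2}$ geometrically. The metric route fixes all sign conventions automatically, and it is the same computation that later yields the Ricci potential (Lemma~\ref{Ricpot_ext}).

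Your bookkeeping does close:
\begin{itemize}
\item $c_1(\hat Y)$ needs no twist correction, since $\hat Y\cong Y\times\mathbf P^{d_0}\times\mathbf P^{d_\infty}$.
\item The sign is $[\hat e_0]-[\hat e_\infty]=+\pi^*c_1(\hat{\mathcal L})$, because the normal bundle of $\hat e_0$ is $\hat{\mathcal L}$.
\item You obtain $c/x_a=s_a+\frac{d_\infty-d_0}{2}$ for all $a\in\hat{\mathcal A}$, with $s_0=d_0+1$ and $s_\infty=-(d_\infty+1)$. Hence $x_0=1$ and $x_\infty=-1$.
\end{itemize}

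Two small repairs are needed.

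First, Proposition~\ref{AKC_equiv} presupposes a finite $x_a\neq0$. So in (1)$\Rightarrow$(2) you must also exclude the case $2s_a+d_\infty-d_0=0$, where the $[\omega_a]$-coefficient vanishes and no $x_a$ exists. It is cleaner to avoid dividing by $x_a$ altogether. Restricting $2\pi c_1(X)$ to $e_0\cong Y\times\mathbf P^{d_0}$ gives $Y_a$-component $\varepsilon_a(s_a+d_\infty+1)[\omega_a]$. Restricting to $e_\infty\cong Y\times\mathbf P^{d_\infty}$ gives $Y_a$-component $\varepsilon_a(s_a-d_0-1)[\omega_a]$. Positivity of both is exactly condition (2).

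Second, when you pass from $\hat X$ back to $\Xi$ on $X$ for the ``moreover'' part, use $\beta^*\Xi=[\omega_0]+[\omega_\infty]+\hat\Xi$ for the blow-down $\beta\colon\hat X\to X$. This is what \eqref{defn_AKC} with $x_0=1$, $x_\infty=-1$ encodes, and it is the combination that vanishes on lines in the exceptional fibres. The sentence defining $\Xi$ in Section~\ref{sec:adm_Kaehler_met} has $-[\omega_\infty]$ instead. That is a sign slip, and it would otherwise make your $\hat X$-computation look inconsistent with \eqref{defn_Fano_AKC} whenever $d_\infty>0$.
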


\begin{definition}
If $X$ satisfies the conditions of Theorem \ref{adm_antican_bdle}, $X$ is called a \emph{Fano admissible manifold}. 
\end{definition}

Here we note that it is shown in \cite{ACGTF08-4} that any such $X$ admits a K\"ahler--Ricci soliton. 

\subsection{Ricci potentials}\label{sec:Ricci_potential}
Let $X = \mathbf{P}_{Y}(\mathcal{O}^{\oplus(d_{0}+1)} \oplus (\mathcal{L} \otimes \mathcal{O}^{\oplus(d_{\infty}+1)}))$ be a Fano admissible manifold. 
Let $\Omega \coloneqq 2\pi c_{1}(X)$, and let $\omega \in \Omega$ be an admissible K\"ahler metric given by \eqref{defn_adm_met}. 
The goal of this section is to compute explicitly the Ricci potential of $\omega$. 
First, by \cite[(3.2)]{ACGTF08-4} we have 
\[
\Ric(\omega) 
= \Ric\left(\frac{\omega}{c}\right) 
= \sum_{a \in \hat{\mathcal{A}}}\Ric(\omega_{a}) - \frac{1}{2}dd^{c}\log(p_{\Omega}(z)\Theta(z))
\]
on $X \setminus (e_{0} \cup e_{\infty})$. 
On the other hand, 
\begin{align*}
\omega 
&= c\left(\sum_{a \in \hat{\mathcal{A}}}\frac{1 + x_{a}z}{x_{a}}\varepsilon_{a}\omega_{a} + dz \wedge \theta\right) \\
&= \sum_{a \in \hat{\mathcal{A}}}\frac{c}{x_{a}}\varepsilon_{a}\omega_{a} + cz\sum_{a \in \hat{\mathcal{A}}}\varepsilon_{a}\omega_{a}+ c(dz \wedge \theta), 
\end{align*}
which can be written as 
\begin{align*}
\omega 
&= \sum_{a \in \hat{\mathcal{A}}}s_{a}\varepsilon_{a}\omega_{a} + \left(\frac{d_{\infty} - d_{0}}{2} + cz\right)\sum_{a \in \hat{\mathcal{A}}}\varepsilon_{a}\omega_{a}+ c(dz \wedge \theta) 
\end{align*}
since for any $a \in \hat{\mathcal{A}}$ 
\[
\frac{c}{x_{a}} 
= s_{a} + \frac{d_{\infty} - d_{0}}{2}. 
\]
Hence we have
\begin{align*}
\Ric(\omega) - \omega 
&= \sum_{a \in \hat{\mathcal{A}}}\Ric(\omega_{a}) - \frac{1}{2}dd^{c}\log(p_{\Omega}(z)\Theta(z)) \\
&\quad -\sum_{a \in \hat{\mathcal{A}}}s_{a}\varepsilon_{a}\omega_{a} - \left(\frac{d_{\infty} - d_{0}}{2} + cz\right)\sum_{a \in \hat{\mathcal{A}}}\varepsilon_{a}\omega_{a} - c(dz \wedge \theta). 
\end{align*}
Using $\Ric(\omega_{a}) = \varepsilon_{a}s_{a}\omega_{a}$, this simplifies to 
\begin{align*}
\Ric(\omega) - \omega 
= - \frac{1}{2}dd^{c}\log(p_{\Omega}(z)\Theta(z)) - \left(\frac{d_{\infty} - d_{0}}{2} + cz\right)\sum_{a \in \hat{\mathcal{A}}}\varepsilon_{a}\omega_{a} - c(dz \wedge \theta). 
\end{align*}
Since $d^{c}z = \Theta(z)\theta$, we have 
\begin{align*}
dz \wedge \theta 
= d(z\theta) - zd\theta 
= d\left(\frac{z}{\Theta(z)}d^{c}z\right) - zd\theta, 
\end{align*}
and using ${\displaystyle d\theta = \sum_{a \in \hat{\mathcal{A}}}\varepsilon_{a}\omega_{a}}$, we obtain 
\begin{align*}
\Ric(\omega) - \omega 
&= - \frac{1}{2}dd^{c}\log(p_{\Omega}(z)\Theta(z)) - \left(\frac{d_{\infty} - d_{0}}{2}\right)d\theta - cd\left(\frac{z}{\Theta(z)}d^{c}z\right). 
\end{align*}
Define functions on $(-1, 1)$ by 
\[
a_{\Theta}(x) \coloneqq \frac{1}{\Theta(x)},\quad b_{\Theta}(x) \coloneqq \frac{x}{\Theta(x)}, 
\]
and let $A_{\Theta}, B_{\Theta}$ be their primitives. 
Then 
\begin{align*}
&d\theta 
= d\left(a_{\Theta}(z)d^{c}z\right) 
= d\left(A_{\Theta}'(z)d^{c}z\right) 
= dd^{c}A_{\Theta}(z), \\
&cd\left(\frac{z}{\Theta(z)}d^{c}z\right) 
= cd\left(b_{\Theta}(z)d^{c}z\right) 
= cd\left(B_{\Theta}'(z)d^{c}z\right) 
= dd^{c}(cB_{\Theta}(z)), 
\end{align*}
and hence 
\begin{align*}
\Ric(\omega) - \omega 
&= \frac{1}{2}dd^{c}\left(-(d_{\infty} - d_{0})A_{\Theta}(z) - 2cB_{\Theta}(z) -\log(p_{\Omega}(z)\Theta(z))\right). 
\end{align*}
\begin{lemma}\label{Ricpot_ext}
Define a smooth function $h \colon (-1, 1) \to \mathbf{R}$ by 
\[
h(x) \coloneqq -(d_{\infty} - d_{0})A_{\Theta}(z) - 2cB_{\Theta}(z) -\log(p_{\Omega}(z)\Theta(z)). 
\]
Then $h$ extends uniquely to a smooth function on the closed interval $[-1, 1]$. 
\end{lemma}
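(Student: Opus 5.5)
The plan is to isolate the logarithmic singularities of each of the three terms of $h$ at the endpoints $x=\pm1$, and to check that their coefficients cancel exactly because of the Fano normalization \eqref{Fano_cond}, $c=(d_0+d_\infty+2)/2$. Uniqueness of the extension is automatic, since $(-1,1)$ is dense in $[-1,1]$. So only existence needs proof. Throughout, $z$ in the formula is read as the variable $x$.

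\textbf{Step 1: factor $\Theta$ and $p_\Omega$.} Since $\Theta$ is smooth with $\Theta(\pm1)=0$, Hadamard's lemma gives $\Theta(x)=(1-x^2)F(x)$ with $F$ smooth on $[-1,1]$. From $\Theta'(\pm1)=\mp2$ we get $F(\pm1)=1$, and $F>0$ on $(-1,1)$ because $\Theta>0$ there, so $F>0$ on all of $[-1,1]$. Similarly $p_\Omega(x)=(1+x)^{d_0}(1-x)^{d_\infty}q(x)$ with $q(x)=\prod_{a\in\mathcal A}(\lambda_a+\varepsilon_a x)^{d_a}$. By Proposition~\ref{AKC_equiv} we have $0<\varepsilon_a x_a<1$, hence $\lambda_a=\varepsilon_a/x_a$ satisfies $|\lambda_a|>1$ with the sign of $\varepsilon_a$. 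So each factor $\lambda_a+\varepsilon_a x=\varepsilon_a(|\lambda_a|+x)\cdot(\pm)$ is nonvanishing on $[-1,1]$, and the product $q$ is positive there (it is positive on $(-1,1)$ because $p_\Omega\Theta>0$ is needed for the logarithm). Consequently
\[
\log(p_\Omega\Theta)=(d_0+1)\log(1+x)+(d_\infty+1)\log(1-x)+\text{(smooth on }[-1,1]).
\]

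\textbf{Step 2: partial fractions for the primitives.} Write
\[
\frac1{\Theta}=\frac1{2(1-x)}+\frac1{2(1+x)}+\frac{1-F(x)}{(1-x^2)F(x)} .
\]
The last term is smooth on $[-1,1]$: $1-F$ is smooth and vanishes at both endpoints, so Hadamard's lemma lets us divide it by $1-x^2$. In the same way,
\[
\frac{x}{\Theta}=\frac1{2(1-x)}-\frac1{2(1+x)}+\frac{x(1-F)}{(1-x^2)F}.
\]
Hence, up to smooth functions on $[-1,1]$ (the smooth parts have smooth primitives, and the choice of integration constants is irrelevant),
\[
A_\Theta=-\tfrac12\log(1-x)+\tfrac12\log(1+x),\qquad B_\Theta=-\tfrac12\log(1-x)-\tfrac12\log(1+x).
\]

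\textbf{Step 3: cancellation.} Substituting into $h$, the coefficient of $\log(1-x)$ is
\[
\tfrac12(d_\infty-d_0)+c-(d_\infty+1)=0,
\]
and the coefficient of $\log(1+x)$ is
\[
-\tfrac12(d_\infty-d_0)+c-(d_0+1)=0,
\]
both using $c=(d_0+d_\infty+2)/2$. Therefore $h$ equals a function that is smooth on $[-1,1]$, which gives the required extension.

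The only delicate point is the bookkeeping in Step 2: one must ensure the remainder terms are genuinely smooth up to the boundary, not merely bounded. This is exactly what the boundary conditions $\Theta'(\pm1)=\mp2$, equivalently $F(\pm1)=1$, provide via Hadamard's lemma.
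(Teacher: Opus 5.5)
Your proof is correct and is essentially the paper's argument: the paper shows that $h'$ extends, by writing $\Theta(x)=-2(x-1)+\Theta_1(x)(x-1)^2$ near $x=1$ and checking that the pole $\frac{d_\infty}{x-1}$ coming from the $\Theta$-terms cancels the pole $\frac{d_\infty}{1-x}$ coming from $-p_\Omega'/p_\Omega$ (and similarly at $x=-1$). That is just the derivative of your cancellation of the $\log(1\mp x)$ coefficients via $c=(d_0+d_\infty+2)/2$, which you organize globally for both endpoints through $\Theta=(1-x^2)F$.

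One small correction: $x_a$ has the sign of $\varepsilon_a$, so $\lambda_a=1/(\varepsilon_a x_a)>1$ for every $a\in\mathcal{A}$ (it does not have the sign of $\varepsilon_a$). Hence each factor satisfies $\lambda_a+\varepsilon_a x\ge\lambda_a-1>0$ on $[-1,1]$, which gives $q>0$ directly instead of through the appeal to the logarithm being defined.
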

\begin{proof}
It suffices to show that the derivative of $h$ 
\begin{align*}
h'(x) 
&= (-(d_{\infty} - d_{0})A_{\Theta}(x)-2cB_{\Theta}(x) - \log(p_{\Omega}(x)\Theta(x)))' \\
&= \frac{-(d_{\infty} - d_{0}) -2cx - \Theta'(x)}{\Theta(x)} - \frac{p_{\Omega}'(x)}{p_{\Omega}(x)}
\end{align*}
extends smoothly to the closed interval $[-1, 1]$. 
We first consider a neighborhood of $x = 1$. 
Since $\Theta(1) = 0$ and $\Theta'(1) = -2$, we can write 
\[
\Theta(x) = -2(x-1) + \Theta_{1}(x)(x-1)^{2}
\]
for some smooth function $\Theta_{1}$ defined near $x = 1$. 
Therefore, we have 
\begin{align*}
\Theta'(x) = -2 + 2\Theta_{1}(x)(x-1) + \Theta_{1}'(x)(x-1)^{2}, 
\end{align*}
and hence 
\begin{align*}
-(d_{\infty} - d_{0}) -2cx - \Theta'(x) 
&= -2d_{\infty} -(d_{0} + d_{\infty} + 2)(x-1)  \\
&\quad - 2\Theta_{1}(x)(x-1) - \Theta_{1}'(x)(x-1)^{2} 
\end{align*}
and 
\begin{align*}
\frac{-(d_{\infty} - d_{0}) -2cx - \Theta'(x)}{\Theta(x)} 
&= \frac{-2d_{\infty}}{(-2 + \Theta_{1}(x)(x-1))(x-1)} \\
&\quad + \frac{-(d_{0} + d_{\infty} + 2) - 2\Theta_{1}(x) - \Theta_{1}'(x)(x-1)}{-2 + \Theta_{1}(x)(x-1)} \\
&= \frac{d_{\infty}}{x-1} + \Theta_{2}(x), 
\end{align*}
where $\Theta_{2}$ is a smooth function defined near $x = 1$. 
On the other hand, writing $p_{\Omega}(x)$ near $x = 1$ as 
\begin{align*}
p_{\Omega}(x) 
= (1 - x)^{d_{\infty}}p_{1}(x), 
\end{align*}
where $p_{1}$ is a polynomial function satisfying $p_{1}(1) \neq 0$, we have 
\[
p_{\Omega}'(x) = -d_{\infty}(1-x)^{d_{\infty}-1}p_{1}(x) + (1 - x)^{d_{\infty}}p_{1}'(x). 
\]
Thus, 
\begin{align*}
- \frac{p_{\Omega}'(x)}{p_{\Omega}(x)} 
&= \frac{d_{\infty}(1-x)^{d_{\infty}-1}p_{1}(x)}{(1 - x)^{d_{\infty}}p_{1}(x)} -\frac{(1 - x)^{d_{\infty}}p_{1}'(x)}{(1 - x)^{d_{\infty}}p_{1}(x)} \\
&= \frac{d_{\infty}}{(1 - x)} + p_{2}(x), 
\end{align*}
where $p_{2}$ is a smooth function defined near $x = 1$. 
Combining the above expressions, we obtain 
\begin{align*}
h'(x)
&= \frac{-(d_{\infty} - d_{0}) -2cx - \Theta'(x)}{\Theta(x)} - \frac{p_{\Omega}'(x)}{p_{\Omega}(x)} \\
&= \frac{d_{\infty}}{x-1} + \Theta_{2}(x) +\frac{d_{\infty}}{(1 - x)} + p_{2}(x) \\
&= \Theta_{2}(x) + p_{2}(x) 
\end{align*}
which shows that $h'$ extends smoothly near $x = 1$. 
The argument near $x = -1$ is similar. 
Therefore, $h'$ extends smoothly to the closed interval $[-1, 1]$, and so does $h$. 
\end{proof}

We still denote this extension by $h$, and define $h_{\omega} \in C^{\infty}(X, \mathbf{R})$ by 
\[
h_{\omega} \coloneqq h(z). 
\]
Noting that $A_{\Theta}$ and $B_{\Theta}$ are defined only up to additive constants, we obtain the following proposition. 

\begin{proposition}\label{adm_Ric_pot}
By choosing $A_{\Theta}$ and $B_{\Theta}$ so that
\[
\int_{X}(1-e^{h_{\omega}})\frac{\omega^{d}}{d!} = 0, 
\]
the function $h_{\omega} = h(z)$ is the Ricci potential of $\omega$. 
\end{proposition}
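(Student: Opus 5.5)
The Ricci potential is characterized by two properties: $\Ric(\omega)-\omega=\frac12 dd^c h_\omega$ on all of $X$, and the normalization $\int_X(1-e^{h_\omega})\omega^d/d!=0$. The plan is to check that $h(z)$ has both, for a suitable choice of the additive constants in $A_\Theta$ and $B_\Theta$.

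First, the equation on the open dense set. On $X_0=X\setminus(e_0\cup e_\infty)$, the computation preceding Lemma \ref{Ricpot_ext} already gives
\[
\Ric(\omega)-\omega=\tfrac12 dd^c h(z).
\]
By Lemma \ref{Ricpot_ext}, $h$ extends to a smooth function on $[-1,1]$. The one point needing care is that $h(z)$ is then smooth on all of $X$, since $z$ is smooth on $X$ but a smooth function of $z$ need not obviously be smooth near the critical set. I would argue this as follows. $z$ is the moment map of an $S^1$-action whose critical submanifolds $e_0,e_\infty$ are the extrema $z=\pm1$. In local equivariant coordinates near $e_0$, $z=1-(\text{positive multiple of }|w|^2)+\dots$, so any smooth function of $z$ is smooth. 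Alternatively, one can use the standard fact from \cite{ACGTF08-3} that a smooth function on $[-1,1]$ composed with $z$ is smooth on $X$. Hence $h_\omega=h(z)\in C^\infty(X,\mathbf R)$.

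Both $\Ric(\omega)-\omega$ and $\frac12dd^ch_\omega$ are then smooth forms on $X$ that agree on the dense open set $X_0$. By continuity they agree on $X$, so $h_\omega$ is a Ricci potential up to an additive constant.

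Next, the normalization. $A_\Theta$ and $B_\Theta$ are each defined only up to an additive constant, and changing them shifts $h$ by an arbitrary real constant $C$. The map $C\mapsto\int_X(1-e^{h_\omega+C})\omega^d/d!$ is strictly decreasing and continuous. It tends to $\vol(X)>0$ as $C\to-\infty$ and to $-\infty$ as $C\to+\infty$, so it has exactly one zero. Choosing the constants accordingly gives the normalization.

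Finally, uniqueness. The Ricci potential is the unique function satisfying both conditions, because the $dd^c$-lemma determines it up to a constant and the normalization fixes that constant. Therefore $h_\omega=h(z)$ is the Ricci potential. The only nontrivial step is the smoothness of $h(z)$ across $e_0\cup e_\infty$, and it is handled by Lemma \ref{Ricpot_ext} together with the local form of $z$ near its critical sets.
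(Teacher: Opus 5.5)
Your proposal is correct and is essentially the paper's argument, which derives the proposition directly from the computation of $\Ric(\omega)-\omega$ on $X\setminus(e_{0}\cup e_{\infty})$, Lemma \ref{Ricpot_ext}, and the freedom in the additive constants of $A_{\Theta}$ and $B_{\Theta}$; the paper leaves implicit the density, normalization and uniqueness steps that you spell out. One minor simplification: smoothness of $h(z)$ on $X$ needs no local analysis near $e_{0}\cup e_{\infty}$, because once $h$ is smooth up to the endpoints it extends smoothly past $[-1,1]$, and then $h(z)$ is just a composition of smooth maps.
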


\begin{remark}
It is shown that the Ricci potential is a function of $z$ in \cite[Corollary 3.3]{MTF11}. 
Proposition \ref{adm_Ric_pot} provides an explicit expression of this function. 
\end{remark}

By Proposition \ref{adm_Ric_pot}, we see that $1-e^{h_{\omega}}$ is a function of $z$. 
In fact the following stronger statement is also known. 
This fact plays a very important role in Sections \ref{sec:ex_v-sol_Fano_adm} and \ref{prf_Main_thm_1}. 

\begin{proposition}[{\cite[Proposition 6]{ACGTF08-3}, \cite[Theorem 9.1]{Ma21}}]\label{orthogonal _proj}
There exist constants $\alpha, \beta \in \mathbf{R}$ such that 
\begin{align*}
\Pi_{\omega}^{G}(1-e^{h_{\omega}}) = \alpha z + \beta. 
\end{align*}
\end{proposition}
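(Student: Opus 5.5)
Since $h_\omega = h(z)$ by Proposition \ref{adm_Ric_pot}, the function $F \coloneqq 1-e^{h_\omega}$ is $G$-invariant and of the form $f(z)$ with $f$ smooth on $[-1,1]$. We will show that its $L^{2}$-projection onto $P_\omega$ equals the projection onto the two-dimensional subspace spanned by $1$ and $z$.

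The first step is to note that $z$ is a Killing potential. Indeed $-dz=\iota_T(\omega/c)$, so $\grad_\omega z$ is, up to the factor $c^{-1}$ and the complex structure, the generator $JT$ of the $\mathbf{C}^*$-action, which is real holomorphic. Constants are trivially Killing potentials. Hence $\mathrm{span}\{1,z\}\subset P_\omega$.

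The second step is to show that $F-\Pi(F)\perp P_\omega$ when $\Pi(F)=\alpha z+\beta$ is defined by orthogonality to $1$ and $z$. The coefficients $\alpha,\beta$ solve the $2\times 2$ system obtained from $\int_X (F-\alpha z-\beta)\,\omega^d/d! = 0$ and $\int_X (F-\alpha z-\beta)z\,\omega^d/d! = 0$. By Proposition \ref{prop_chara_poly} these are one-variable integrals against $p_\Omega(x)\,dx$, and the system is nondegenerate by Cauchy--Schwarz since $z$ is nonconstant. It then remains to show that every $G$-invariant Killing potential $\phi$ satisfies $\int_X (F-\alpha z-\beta)\phi\,\omega^d=0$. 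Because $F$ depends only on $z$, it suffices to show that the fiber average of $\phi$ over the level sets $z^{-1}(t)$ is an affine function of $t$.

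For this, decompose the Lie algebra of $G$. Since $T$ is central in the relevant automorphism group, hamiltonian Killing fields commuting with $T$ split into the $T$-direction, with potential $z$, and lifts of Killing fields from $\hat Y$, more precisely from $\mathbf{P}^{d_0}$, $\mathbf{P}^{d_\infty}$ and the $Y_a$. Following the proof of \cite[Proposition 6]{ACGTF08-3}, a lifted Killing potential has the form $\phi=\sum_a (1+x_a z)x_a^{-1}\varepsilon_a \phi_a$, where $\phi_a$ is a Killing potential on $(Y_a,\omega_a)$ normalized to have mean zero. Integrating along the fibers of $X_0\to$ the $z$-interval with the product measure $p_\Omega(z)\,dz\prod_a\omega_a^{d_a}$, each $\phi_a$ integrates to zero over $Y_a$. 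So the fiber average of $\phi$ is a combination of $1$ and $z$. Therefore $\int (F-\alpha z-\beta)\phi=0$ by the choice of $\alpha,\beta$.

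The main obstacle is the claim that all $G$-invariant Killing potentials have this form, i.e.\ that $\mathrm{Lie}(G)$ is generated by $T$ and lifts from the base. Equivalently, we need that every Killing potential is $\mathbf{R}$-linear in $z$ along the fibers with mean-zero base parts. This is exactly where one must invoke the cited structure results, \cite[Proposition 6]{ACGTF08-3} and \cite[Theorem 9.1]{Ma21}, which hold for admissible metrics, including when a blow-down occurs. Near $e_0$ and $e_\infty$, one also needs to handle the contributions of the $\mathbf{P}^{d_0}$ and $\mathbf{P}^{d_\infty}$ factors via $\hat X$; this is a measure-zero issue once the fiber-integration formula of Proposition \ref{prop_chara_poly} is applied.
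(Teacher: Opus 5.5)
Your outline is correct and is essentially the paper's approach. The paper gives no independent argument; it simply cites \cite[Proposition 6]{ACGTF08-3} and \cite[Theorem 9.1]{Ma21}. Your fiber-averaging reduction, with the structure of the $T$-commuting Killing potentials taken from those same sources, is what that citation encapsulates.

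One small quibble concerns your claim that $T$ is central in the relevant automorphism group. This holds in the compact group $G$, but not in $\mathrm{Aut}^{0}(X)$ in general, since the unipotent automorphisms coming from $H^{0}(Y,\mathcal{L})$ do not commute with $T$. In any case you only need that a $G$-invariant Killing potential generates a field commuting with $T \subset G$, which is automatic.
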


Finally, we show the following proposition, which is crucial for proving Theorem \ref{Main_thm_1} in the introduction. 

\begin{proposition}\label{mommap_normalization}
Let $w \coloneqq \dfrac{d_{0} - d_{\infty}}{d_{0} + d_{\infty} + 2}$. 
Then 
\begin{align*}
\int_{X}(z-w) e^{h_{\omega}}\frac{\omega^{d}}{d!} = 0. 
\end{align*}
\end{proposition}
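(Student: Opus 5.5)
The plan is to reduce the integral over $X$ to a one-variable integral and then recognise the integrand as an exact derivative.

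First, since $h_{\omega}=h(z)$ and $(z-w)e^{h_{\omega}}$ is a function of $z$ alone, Proposition \ref{prop_chara_poly} reduces the claim to
\[
\int_{-1}^{1}(x-w)e^{h(x)}p_{\Omega}(x)\,dx = 0 .
\]
Here $h$ is the smooth extension to $[-1,1]$ given by Lemma \ref{Ricpot_ext}.

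Next, I use the explicit formula for $h$ on $(-1,1)$. Set
\[
F(x)\coloneqq e^{-(d_{\infty}-d_{0})A_{\Theta}(x)-2cB_{\Theta}(x)}.
\]
By definition of $h$ we have $e^{h}p_{\Omega}\Theta=F$, so
\[
e^{h(x)}p_{\Omega}(x)=\frac{F(x)}{\Theta(x)} \quad \text{on } (-1,1).
\]
Differentiating, using $A_{\Theta}'=1/\Theta$ and $B_{\Theta}'=x/\Theta$, gives
\[
F'(x)=\frac{-(d_{\infty}-d_{0})-2cx}{\Theta(x)}\,F(x).
\]
With $c=(d_{0}+d_{\infty}+2)/2$ from \eqref{Fano_cond}, the numerator is $(d_{0}-d_{\infty})-2cx=-2c(x-w)$. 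This is exactly where the particular value of $w$ enters. Hence
\[
(x-w)e^{h(x)}p_{\Omega}(x)=-\frac{1}{2c}F'(x),
\]
and the integral equals $-\frac{1}{2c}\bigl(\lim_{x\to1}F(x)-\lim_{x\to-1}F(x)\bigr)$.

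It remains to check that both boundary limits vanish; this is the only point needing care, since $A_{\Theta}$ and $B_{\Theta}$ diverge at $\pm1$. The cleanest route avoids asymptotics of $A_{\Theta}$ and $B_{\Theta}$. Write $F=e^{h}\,p_{\Omega}\,\Theta$. By Lemma \ref{Ricpot_ext}, $e^{h}$ is continuous, hence bounded, on $[-1,1]$. Also $p_{\Omega}$ is a polynomial and $\Theta(\pm1)=0$. Therefore $F(x)\to0$ as $x\to\pm1$, and the integral vanishes.

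As a consistency check, the asymptotics agree. Near $x=1$ one has $\Theta\approx2(1-x)$, so $(\log F)'\approx-c(1-w)/(1-x)=-(d_{\infty}+1)/(1-x)$, giving $F\sim(1-x)^{d_{\infty}+1}$. Similarly $F\sim(1+x)^{d_{0}+1}$ near $x=-1$. This completes the proof.
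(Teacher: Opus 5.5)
Your proof is correct and is essentially the paper's argument. You reduce to a one-variable integral via Proposition~\ref{prop_chara_poly}, observe that $e^{-(d_{\infty}-d_{0})A_{\Theta}-2cB_{\Theta}}$ differentiates to $-2c(x-w)e^{h}p_{\Omega}$, and show the boundary terms vanish because $e^{h}$ is bounded while $p_{\Omega}\Theta\to 0$. The paper does the same thing phrased as an integration by parts of the $x$-term that produces the $w$-term, with the boundary term killed by the equivalent observation that $h(\pm1)$ is finite while $\log(p_{\Omega}\Theta)\to-\infty$.
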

\begin{proof}
We first apply Proposition \ref{prop_chara_poly} to obtain 
\begin{align*}
\int_{X}ze^{h_{\omega}}\frac{\omega^{d}}{d!} 
&= 2\pi c^{d}\left(\prod_{a \in \hat{\mathcal{A}}}\vol(Y_{a}, \omega_{a})\right)\int_{-1}^{1}e^{-(d_{\infty} - d_{0})A_{\Theta}(x) - 2cB_{\Theta}(x)}\frac{x}{\Theta(x)}\,dx. 
\end{align*}
We next compute the integral on the right-hand side. 
Observe that 
\begin{align*}
\int_{-1}^{1}e^{-(d_{\infty} - d_{0})A_{\Theta}(x) - 2cB_{\Theta}(x)}\frac{x}{\Theta(x)}\,dx 
= \int_{-1}^{1}e^{-(d_{\infty} - d_{0})A_{\Theta}(x)}\left(-\frac{1}{2c}e^{-2cB_{\Theta}}\right)'(x)\,dx. 
\end{align*}
Integrating by parts, we obtain 
\begin{align*}
&\int_{-1}^{1}e^{-(d_{\infty} - d_{0})A_{\Theta}(x) - 2cB_{\Theta}(x)}\frac{x}{\Theta(x)}\,dx \\
&= \left[-\frac{1}{2c}e^{-(d_{\infty} - d_{0})A_{\Theta}(x)-2cB_{\Theta}(x)}\right]_{-1}^{1} + w\int_{-1}^{1}e^{-(d_{\infty} - d_{0})A_{\Theta}(x)-2cB_{\Theta}(x)}\frac{1}{\Theta(x)}\,dx. 
\end{align*}
By Lemma \ref{Ricpot_ext}, we have 
\begin{align*}
\lim_{x \to 1}(-(d_{\infty} - d_{0})A_{\Theta}(x) - 2cB_{\Theta}(x)) &= h(1) + \lim_{x \to 1}\log(p_{\Omega}(x)\Theta(x)) = -\infty, \\
\lim_{x \to -1}(-(d_{\infty} - d_{0})A_{\Theta}(x)-2cB_{\Theta}(x)) &= h(-1) + \lim_{x \to -1}\log(p_{\Omega}(x)\Theta(x)) = -\infty. 
\end{align*}
Hence the boundary term vanishes:  
\begin{align*} 
\left[-\frac{1}{2c}e^{-(d_{\infty} - d_{0})A_{\Theta}(x)-2cB_{\Theta}(x)}\right]_{-1}^{1} = 0. 
\end{align*}
Therefore, we obtain 
\begin{align*}
\int_{-1}^{1}e^{-(d_{\infty} - d_{0})A_{\Theta}(x)-2cB_{\Theta}(x)}\frac{x}{\Theta(x)}\,dx 
= w\int_{-1}^{1}e^{-(d_{\infty} - d_{0})A_{\Theta}(x)-2cB_{\Theta}(x)}\frac{1}{\Theta(x)}\,dx.  
\end{align*}
Substituting this into the expression for the integral over $X$, we have 
\begin{align*}
\int_{X}ze^{h_{\omega}}\frac{\omega^{d}}{d!} 
&= 2\pi c^{d}w\left(\prod_{a \in \hat{\mathcal{A}}}\vol(Y_{a}, \omega_{a})\right)\int_{-1}^{1}e^{-(d_{\infty} - d_{0})A_{\Theta}(x)-2cB_{\Theta}(x)}\frac{1}{\Theta(x)}\,dx \\
&= w\int_{X}e^{h_{\omega}}\frac{\omega^{d}}{d!}. 
\end{align*}
\end{proof}

\section{$v$-solitons on Fano admissible manifolds}\label{sec:v-sol_Fano_adm}
\subsection{$v$-solitons}
We now temporarily leave the setting of admissible manifolds and consider a general Fano manifold. 
Let $X$ be a $d$-dimensional Fano manifold, and let $T \subset \Aut^{0}(X)$ be a compact torus. 
Let $\Omega \coloneqq 2\pi c_{1}(X)$, and let $\omega \in \Omega$ be a $T$-invariant K\"ahler metric. 
Denote by $\mu_{\omega} \colon X \to \mathfrak{t}^{\ast}$ the moment map associated with the $T$-action. 
We normalize $\mu_{\omega}$ so that for every $\xi \in \mathfrak{t}$, 
\[
\int_{X}\langle\mu_{\omega}, \xi\rangle e^{h_{\omega}}\frac{\omega^{d}}{d!} = 0, 
\]
where $\langle\cdot, \cdot \rangle \colon \mathfrak{t}^{\ast} \times \mathfrak{t} \to \mathbf{R}$ denotes the natural pairing. 
With this normalization, the image $P \coloneqq \mu_{\omega}(X)$ is independent of the choice of $T$-invariant K\"ahler metric $\omega \in \Omega$. 

\begin{definition}
Let $v \in C^{\infty}(P, \mathbf{R}_{> 0})$. 
A $T$-invariant K\"ahler metric $\omega \in \Omega$ is called a \emph{$v$-soliton} if it satisfies 
\begin{align}\label{defn_v-sol}
\Ric(\omega) - \omega = \frac{1}{2}dd^{c}\log(v(\mu_{\omega})). 
\end{align}
\end{definition}

\begin{example}\label{ex_v-sol_KE}
If $v(x) = 1$ for all $x \in P$, then the equation \eqref{defn_v-sol} becomes 
\[
\Ric(\omega) - \omega = 0, 
\]
so a $v$-soliton reduces to a K\"ahler--Einstein metric. 
\end{example}

\begin{example}
Let $\xi \in \mathfrak{t}$, and set $v(x) = e^{\langle x, \xi \rangle}$ for each $x \in P$. 
Then the equation \eqref{defn_v-sol} becomes 
\[
\Ric(\omega) - \omega = \frac{1}{2}dd^{c}\langle \mu_{\omega}, \xi \rangle, 
\]
and such a metric is known to be a K\"ahler--Ricci soliton. 
\end{example}

\begin{example}\label{ex_v-sol_M-sol}
Mabuchi showed in \cite{Ma21} that Mabuchi solitons are a special case of $v$-solitons as follows. 
Let $X$ be a Fano manifold and assume $M_{X} < 1$. 
Fix a maximal compact subgroup $G \subset \Aut^{0}(X)$. 
For a K\"ahler metric $\omega \in \Omega$, the vector field $\grad_{\omega}\Pi_{\omega}^{G}(1-e^{h_{\omega}})$ is real holomorphic on $X$, and in fact induces an $S^{1}$-action on $X$ (see \cite{FM95} and \cite{Ma21}). 
For a suitable constant $a \in \mathbf{R}$, define $\mu_{\omega} \coloneqq \Pi_{\omega}^{G}(1-e^{h_{\omega}}) -a \colon X \to \mathbf{R}$. 
Then $\mu_{\omega}$ is a normalized moment map for this $S^{1}$-action. 
Define a function $v \colon P \to \mathbf{R}$ by $v(x) \coloneqq 1- a - x$. 
Since $M_{X} < 1$, we have $v > 0$ on $P$. 
A direct computation shows that $\omega$ is a Mabuchi soliton if and only if 
\begin{align*}
\Ric(\omega) - \omega = \frac{1}{2}dd^{c}\log(v(\mu_{\omega})). 
\end{align*}
Thus, Mabuchi solitons are $v$-solitons for this choice of $v$. 
\end{example}

Next, we introduce an obstruction to the existence of $v$-solitons, analogous to the Futaki invariant for K\"ahler--Einstein metrics. 
Let $\omega \in \Omega$ be a $T$-invariant K\"ahler metric, and let $\mu_{\omega} \colon X \to \mathfrak{t}^{\ast}$ be the normalized moment map. 
Let $\mu_{\mathrm{DH}}$ denote the Borel measure on $P$ obtained as the pushforward of the Liouville measure $\omega^{d}/d!$ under $\mu_{\omega}$. 
It is known that $\mu_{\mathrm{DH}}$ is independent of the choice of $\omega \in \Omega$. 
This measure is known as the \emph{Duistermaat--Heckman measure} on $P$ associated with the $T$-action on $X$. 

\begin{definition}
Let $v \in C^{\infty}(X, \mathbf{R}_{>0})$ be a weight function and define a linear map $\Fut_{v} \colon \mathfrak{t} \to \mathbf{R}$ by 
\[
\Fut_{v}(\xi) \coloneqq \int_{P}\langle x, \xi \rangle v(x)\,d\mu_{\mathrm{DH}}. 
\]
This is called the \emph{$v$-Futaki invariant} of $X$. 
\end{definition}

\begin{proposition}\label{v-sol_vanish_vFut}
If $\Omega$ contains a $v$-soliton, then $\Fut_{v} = 0$. 
\end{proposition}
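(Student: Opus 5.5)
The plan is to show that, for a $v$-soliton $\omega$, the measure $v(\mu_{\omega})\,\omega^{d}/d!$ coincides with $e^{h_{\omega}}\omega^{d}/d!$ up to a positive constant. Then $\Fut_{v}(\xi)$ becomes a positive multiple of $\int_{X}\langle\mu_{\omega},\xi\rangle e^{h_{\omega}}\omega^{d}/d!$. This integral vanishes by the very normalization of the moment map.

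Since $\mu_{\mathrm{DH}}$ and $P$ do not depend on the choice of $T$-invariant metric in $\Omega$, we may compute $\Fut_{v}$ using the $v$-soliton $\omega$ itself. By the definition of the pushforward measure,
\[
\Fut_{v}(\xi) = \int_{X}\langle \mu_{\omega},\xi\rangle\, v(\mu_{\omega})\,\frac{\omega^{d}}{d!}.
\]
Comparing the soliton equation \eqref{defn_v-sol} with the defining equation $\Ric(\omega)-\omega=\frac12 dd^{c}h_{\omega}$ of the Ricci potential gives
\[
dd^{c}\bigl(h_{\omega}-\log v(\mu_{\omega})\bigr)=0 .
\]
The function $h_{\omega}-\log v(\mu_{\omega})$ is smooth on the compact manifold $X$: here $v>0$ is smooth on $P$ and $\mu_{\omega}$ takes values in $P$. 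A pluriharmonic function on a compact connected manifold is constant, by the maximum principle. Hence $e^{h_{\omega}} = C\, v(\mu_{\omega})$ for some constant $C>0$.

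Substituting this into the expression for $\Fut_{v}$ gives
\[
\Fut_{v}(\xi)=C^{-1}\int_{X}\langle\mu_{\omega},\xi\rangle e^{h_{\omega}}\frac{\omega^{d}}{d!}=0
\]
by the normalization of $\mu_{\omega}$, which proves the proposition.

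The only delicate point is the invariance step: the normalized moment maps for different metrics must produce the same pushforward measure. This is exactly the stated independence of $\mu_{\mathrm{DH}}$, and we take it as given. If one prefers to avoid it, the argument above already suffices, provided $\Fut_{v}$ is understood as computed with respect to the soliton metric.
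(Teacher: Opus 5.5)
Your proposal is correct and matches the paper's proof: comparing the soliton equation with the definition of $h_{\omega}$ gives $v(\mu_{\omega})=e^{a+h_{\omega}}$ for a constant $a$, so $\Fut_{v}(\xi)$ is a positive multiple of $\int_{X}\langle\mu_{\omega},\xi\rangle e^{h_{\omega}}\omega^{d}/d!$, which vanishes by the normalization of $\mu_{\omega}$. You also justify why the $dd^{c}$-closed difference is constant, and you state explicitly the use of the metric-independence of $\mu_{\mathrm{DH}}$ that the paper leaves implicit; both additions are fine.
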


\begin{proof}
Let $\omega \in \Omega$ be a $v$-soliton on $X$. 
Then by \eqref{defn_v-sol} there exists a constant $a \in \mathbf{R}$ such that 
\[
\log(v(\mu_{\omega})) = a + h_{\omega}, 
\]
hence $v(\mu_{\omega}) = e^{a + h_{\omega}}$. 
Therefore, for any $\xi \in \mathfrak{t}$ we have 
\begin{align*}
\Fut_{v}(\xi) 
&= \int_{X}\langle \mu_{\omega}, \xi \rangle v(\mu_{\omega})\frac{\omega^{d}}{d!} 
= \int_{X}\langle \mu_{\omega}, \xi \rangle e^{a + h_{\omega}}\frac{\omega^{d}}{d!} \\
&= e^{a}\int_{X}\langle \mu_{\omega}, \xi \rangle e^{h_{\omega}}\frac{\omega^{d}}{d!} 
= 0 
\end{align*}
by the normalization of $\mu_{\omega}$. 
\end{proof}

\begin{remark}\label{M-sol_vFut_vanish}
It is known that the $v$-Futaki invariant associated with Mabuchi solitons always vanishes. 
In fact, using the notation of Example \ref{ex_v-sol_M-sol}, this follows from 
\begin{align*}
\Fut_{v}(\xi) 
&= \int_{X}\langle \mu_{\omega}, \xi \rangle v(\mu_{\omega})\frac{\omega^{d}}{d!} \\
&= \int_{X}\langle \mu_{\omega}, \xi \rangle \frac{\omega^{d}}{d!} 
- \int_{X}\langle \mu_{\omega}, \xi \rangle \Pi_{\omega}^{G}(1-e^{h_{\omega}})\frac{\omega^{d}}{d!} \\
&= \int_{X}\langle \mu_{\omega}, \xi \rangle \frac{\omega^{d}}{d!} 
- \int_{X}\langle \mu_{\omega}, \xi \rangle (1-e^{h_{\omega}})\frac{\omega^{d}}{d!} \\
&= \int_{X}\langle \mu_{\omega}, \xi \rangle \frac{\omega^{d}}{d!} 
- \int_{X}\langle \mu_{\omega}, \xi \rangle \frac{\omega^{d}}{d!} \\
&= 0. 
\end{align*}
\end{remark}

\subsection{Existence result for Fano admissible manifolds}\label{sec:ex_v-sol_Fano_adm}
In this section, we study the existence problem of $v$-solitons on Fano admissible manifolds. 
Let $X = \mathbf{P}(\mathcal{O}^{\oplus(d_{0}+1)} \oplus (\mathcal{L} \otimes \mathcal{O}^{\oplus(d_{\infty}+1)}))$ be a Fano admissible manifold, and let $\Omega \coloneqq 2\pi c_{1}(X)$ be the admissible K\"ahler class given by \eqref{defn_AKC}. 
Consider the $S^{1}$-action on $X$ induced by scalar multiplication on $\mathcal{O}^{\oplus(d_{0}+1)}$, and let $T$, $z$ and $\theta$ be as defined in Section \ref{sec:adm_Kaehler_met}. 
If $\omega \in \Omega$ is an admissible K\"ahler metric, then by Proposition \ref{mommap_normalization}, the normalized moment map is given by
\[
\mu_{\omega} \coloneqq c(z-w), 
\]
where $w = (d_{0} - d_{\infty})/(d_{0} + d_{\infty} + 2)$. 
Let $P \coloneqq \mu_{\omega}(X)$, and let $v \in C^{\infty}(P, \mathbf{R}_{>0})$. 
Define a function $u \in C^{\infty}([-1, 1], \mathbf{R}_{>0})$ by $u(x) \coloneqq v(c(z-w))$. 
Then $\omega$ is a $v$-soliton if and only if it satisfies 
\[
\Ric(\omega) - \omega = \frac{1}{2}dd^{c}\log(u(z)). 
\]

The goal of this section is to prove the following theorem. 

\begin{theorem}\label{existence_adm_v-sol}
For a Fano admissible manifold $X$, the following three conditions are equivalent: 
\begin{enumerate}[(1)]
\item $X$ admits an admissible $v$-soliton. 
\item The $v$-Futaki invariant of $X$ vanishes. 
\item ${\displaystyle \int_{-1}^{1}(x - w)u(x)p_{\Omega}(x)\,dx = 0}$. 
\end{enumerate}
\end{theorem}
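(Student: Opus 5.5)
The argument runs (1) $\Rightarrow$ (2) $\Rightarrow$ (3) $\Rightarrow$ (1). The first two implications are essentially formal; the real content is the construction of $\Theta$ in the last step.

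For (1) $\Rightarrow$ (2), we simply invoke Proposition \ref{v-sol_vanish_vFut}.

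For (2) $\Rightarrow$ (3), we compute $\Fut_{v}$ on the generator $T$ of the $S^{1}$-action using an admissible metric $\omega\in\Omega$. By Proposition \ref{mommap_normalization} the normalized moment map is $\mu_\omega=c(z-w)$, so Proposition \ref{prop_chara_poly} gives
\[
\Fut_v(T)=\int_X c(z-w)\,u(z)\frac{\omega^d}{d!}=2\pi c^{d+1}\Big(\prod_{a\in\hat{\mathcal A}}\vol(Y_a,\omega_a)\Big)\int_{-1}^1(x-w)u(x)p_\Omega(x)\,dx .
\]
The positive prefactor drops out, so vanishing of $\Fut_v$ (in particular on $T$) yields (3). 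Here $T$ is the torus in the definition of $v$-solitons on $X$, so if $\mathfrak t$ is one-dimensional this is in fact an equivalence.

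For (3) $\Rightarrow$ (1), we look for an admissible metric with profile $\Theta$. By Proposition \ref{adm_Ric_pot}, $\Ric(\omega)-\omega=\frac12 dd^c h(z)$. An $S^1$-invariant function of $z$ whose $dd^c$ vanishes is constant (the $dd^c$-lemma for functions: it is pluriharmonic on compact $X$). Hence $\omega$ is a $v$-soliton if and only if $h(z)=\log u(z)+\mathrm{const}$, i.e. $h'=u'/u$ on $(-1,1)$. Substituting the formula for $h'$ from the proof of Lemma \ref{Ricpot_ext}, this becomes the first-order linear ODE
\[
\frac{-(d_\infty-d_0)-2cx-\Theta'}{\Theta}-\frac{p_\Omega'}{p_\Omega}=\frac{u'}{u}.
\]
Setting $F:=\Theta p_\Omega u$ and noting $(d_\infty-d_0)+2cx=2c(x-w)$, the equation reads $F'=-2c(x-w)\,p_\Omega u$. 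Its solution, with $F(-1)=0$, is
\[
F(x)=2c\int_{-1}^x (w-t)\,u(t)p_\Omega(t)\,dt,\qquad \Theta=\frac{F}{p_\Omega u}.
\]
It remains to check the boundary conditions, and this is where hypothesis (3) enters. Condition (3) is exactly $F(1)=0$. Positivity of $F$ on $(-1,1)$ follows because $F'$ is positive on $(-1,w)$ and negative on $(w,1)$: $F$ rises from $0$ and then decreases back to $0$, staying positive in between. Here $w\in(-1,1)$, and $p_\Omega>0$ on $(-1,1)$ since $\lambda_a+\varepsilon_a x>0$ by Proposition \ref{AKC_equiv}.

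The main obstacle is smoothness of $\Theta$ at $\pm1$ together with $\Theta'(\pm1)=\mp2$, since $p_\Omega$ vanishes there to orders $d_0$ and $d_\infty$. Near $x=-1$ we write $p_\Omega=(1+x)^{d_0}q(x)$ with $q(-1)>0$. Then
\[
F(x)=2c\,(1+x)^{d_0+1}\int_0^1 s^{d_0}(w-t_s)\,u(t_s)q(t_s)\,ds,\qquad t_s=-1+s(1+x),
\]
so $F=(1+x)^{d_0+1}G(x)$ with $G$ smooth. Therefore $\Theta=(1+x)G/(qu)$ is smooth, and
\[
\Theta'(-1)=\frac{G(-1)}{q(-1)u(-1)}=\frac{2c(w+1)}{d_0+1}.
\]
Since $w+1=\frac{2(d_0+1)}{d_0+d_\infty+2}$ and $2c=d_0+d_\infty+2$, this equals $2$, as required. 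The endpoint $x=1$ is handled symmetrically, writing $F(x)=-2c\int_x^1(w-t)u\,p_\Omega\,dt$ via $F(1)=0$, which gives $\Theta'(1)=-2$. By the admissible construction of \cite{ACGTF08-3}, this $\Theta$ defines an admissible Kähler metric in $\Omega$, and by the computation above it is a $v$-soliton.
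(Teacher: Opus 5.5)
Your proof is correct and follows essentially the same route as the paper: you reduce the $v$-soliton condition for an admissible metric to the first-order ODE $(u\,p_\Omega\Theta)'=-(d_0+d_\infty+2)(x-w)\,u\,p_\Omega$, solve it explicitly so that the solution vanishes at $-1$, use (3) to get vanishing at $1$, and then check positivity and the endpoint conditions $\Theta(\pm1)=0$, $\Theta'(\pm1)=\mp2$. The differences are minor: you derive the ODE from the explicit Ricci potential of Lemma \ref{Ricpot_ext}, using that a pluriharmonic function on compact $X$ is constant, instead of the componentwise decomposition in Proposition \ref{existence_v-sol_base_KE}; and your substitution $t=-1+s(1+x)$ makes explicit the factorization $F=(1+x)^{d_0+1}G$ with $G$ smooth, which the paper only asserts.
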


\begin{remark}
The equivalence in Theorem \ref{existence_adm_v-sol} is likely known to experts (see \cite[Theorem 1.2]{NN24} for a related result). 
However, since no explicit proof appears in the literature, we include one here for the reader’s convenience. 
\end{remark}

Before proving Theorem \ref{existence_adm_v-sol}, we present some corollaries. 
From Example \ref{ex_v-sol_KE}, we immediately obtain the following, which was obtained in \cite{ACGTF08-4}. 

\begin{corollary}[{\cite[Corollary 3.1]{ACGTF08-4}}]\label{existence_adm_KE}
For a Fano admissible manifold $X$, the following three conditions are equivalent: 
\begin{enumerate}[(1)]
\item $X$ admits a $G$-invariant K\"ahler--Einstein metric. 
\item The Futaki invariant of $X$ vanishes. 
\item ${\displaystyle \int_{-1}^{1}(x - w)p_{\Omega}(x)\,dx = 0}$. 
\end{enumerate}
\end{corollary}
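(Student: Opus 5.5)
The plan is to specialize Theorem \ref{existence_adm_v-sol} to the weight $v \equiv 1$, for which $u \equiv 1$. By Example \ref{ex_v-sol_KE}, a $v$-soliton is then exactly a Kähler--Einstein metric. For this weight $\Fut_{v}(\xi) = \int_{P}\langle x,\xi\rangle\,d\mu_{\mathrm{DH}}$, which is the classical Futaki invariant evaluated on $\xi$ up to a nonzero constant. To see this, I would use the standard identity $\Fut(\xi) = -\int_{X}\langle\mu_{\omega},\xi\rangle\,\omega^{d}/d!$, valid when the moment map is normalized with respect to $e^{h_{\omega}}\omega^{d}/d!$. This identity follows by integrating $\xi(h_{\omega})$ against $\omega^{d}$: one writes $\Delta\langle\mu_\omega,\xi\rangle + \langle\mu_\omega,\xi\rangle + \xi(h_\omega)$ as a constant, and that constant vanishes by the chosen normalization. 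Condition (3) with $u \equiv 1$ is exactly condition (3) of the corollary. So the equivalences (2)$\Leftrightarrow$(3), and (1)$\Rightarrow$(2) for admissible metrics, come directly from the theorem.

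Two points need care. The first is that the Futaki invariant here is on all of $\mathfrak{aut}(X)$ (or the Lie algebra of $G$), not only on the $S^{1}$ generated by $T$. For (1)$\Rightarrow$(2) this is just the classical fact that a Kähler--Einstein metric forces $\Fut \equiv 0$ on all holomorphic vector fields. For (2)$\Rightarrow$(3) only the restriction to $T$ is needed, and that restriction is computed by Proposition \ref{prop_chara_poly} together with $\mu_\omega = c(z-w)$ from Proposition \ref{mommap_normalization}.

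The second point is (3)$\Rightarrow$(1). Here Theorem \ref{existence_adm_v-sol} produces an admissible Kähler--Einstein metric, but the corollary asks for a $G$-invariant one. I would argue this in one of two ways. One option is to invoke uniqueness of Kähler--Einstein metrics up to $\Aut^{0}(X)$ (Bando--Mabuchi), so that some $\Aut^0(X)$-translate of the metric is $G$-invariant. The other is to note that its isometry group is a maximal compact subgroup, which is conjugate to $G$. I expect this step to be the only genuine subtlety; it is standard, so I would cite it rather than reprove it. The main computational check is matching the normalization constant between $\Fut$ and $\Fut_{v}$ with $v \equiv 1$, but only vanishing matters, so the constant is irrelevant.
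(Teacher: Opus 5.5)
Your proposal is correct and is essentially the paper's argument: the paper derives this corollary directly from Theorem \ref{existence_adm_v-sol} by taking $v\equiv 1$ and using Example \ref{ex_v-sol_KE}, and it leaves implicit the two points you spell out (that $\Fut_{v}$ with $v\equiv 1$ is the classical Futaki invariant, and the passage from an admissible to a $G$-invariant K\"ahler--Einstein metric). For that second point, your Matsushima-plus-conjugacy route is the one that works (the isometry group of a K\"ahler--Einstein metric is maximal compact in $\Aut^{0}(X)$, and maximal compact subgroups are conjugate); Bando--Mabuchi uniqueness alone does not produce a $G$-invariant translate.
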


Noting that the $v$-Futaki invariant corresponding to Mabuchi solitons always vanishes (cf. Proposition \ref{orthogonal _proj} and Remark \ref{M-sol_vFut_vanish}), Theorem \ref{existence_adm_v-sol} yields the following corollary.

\begin{corollary}\label{existence_adm_M-sol}
A Fano admissible manifold $X$ admits a Mabuchi soliton if and only if $M_{X} < 1$. 
\end{corollary}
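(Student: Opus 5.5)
The plan is to derive Corollary \ref{existence_adm_M-sol} from Theorem \ref{existence_adm_v-sol}, applied to the weight $v$ of Example \ref{ex_v-sol_M-sol}.

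Necessity is already known. If $X$ admits a Mabuchi soliton, then $M_{X} < 1$ by Example \ref{mc_obst}.

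For sufficiency, assume $M_{X} < 1$ and fix an admissible K\"ahler metric $\omega \in \Omega$. By Proposition \ref{orthogonal _proj} we have $\Pi_{\omega}^{G}(1-e^{h_{\omega}}) = \alpha z + \beta$.

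The first case is $\alpha = 0$. The projection is then constant, and pairing with $1$ gives $\beta = \int(1-e^{h_\omega})/\vol = 0$. So the Futaki invariant vanishes, and Corollary \ref{existence_adm_KE} produces a K\"ahler--Einstein metric. By Example \ref{KE_M-sol} this metric is a Mabuchi soliton.

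The second case is $\alpha \neq 0$. Here the extremal-type vector field $\grad_{\omega}\Pi_{\omega}^{G}(1-e^{h_{\omega}})$ is a nonzero multiple of $\grad_\omega z$. It therefore generates the admissible $S^{1}$-action, and it is enough to work with the torus $T = S^{1}$ from Section \ref{sec:ex_v-sol_Fano_adm}. Following Example \ref{ex_v-sol_M-sol}, I set
\[
v(\mu_\omega) = 1 - \Pi_{\omega}^{G}(1-e^{h_{\omega}}) = 1 - \beta - \alpha z,
\]
which is affine in the normalized moment map $\mu_\omega = c(z-w)$. The condition $M_X<1$ says exactly that $\max_{[-1,1]}(\alpha x+\beta)<1$, so $u(x) = 1-\beta-\alpha x > 0$ on $[-1,1]$. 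Thus $v$ is a legitimate positive weight on $P$.

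The next step is to check condition (3) of Theorem \ref{existence_adm_v-sol}, namely $\int_{-1}^{1}(x-w)u(x)p_{\Omega}(x)\,dx = 0$. By Proposition \ref{prop_chara_poly}, this integral is a positive multiple of $\int_X (z-w)\bigl(1-\Pi_{\omega}^{G}(1-e^{h_\omega})\bigr)\omega^d/d!$. Since $z-w$ is a Killing potential, the projection may be removed inside the integral, exactly as in Remark \ref{M-sol_vFut_vanish}. This turns the integral into $\int_X (z-w)e^{h_\omega}\omega^d/d!$, which vanishes by Proposition \ref{mommap_normalization}.

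Theorem \ref{existence_adm_v-sol} then gives an admissible $v$-soliton $\omega'$. By Example \ref{ex_v-sol_M-sol}, it remains to confirm that $\omega'$ is a Mabuchi soliton. The identity $\Ric(\omega')-\omega' = \tfrac12 dd^c\log v(\mu_{\omega'})$ gives $e^{h_{\omega'}} = e^{a}v(\mu_{\omega'})$ for some constant $a$.

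Here lies the main obstacle: I must show $e^{a}=1$, so that $1-e^{h_{\omega'}}$ is an affine function of $\mu_{\omega'}$ and hence a Killing potential. Both $e^{h_{\omega'}}$ and $v(\mu_{\omega'})$ integrate to the volume. For $e^{h_{\omega'}}$ this is the normalization of the Ricci potential. For $v(\mu_{\omega'})$ it holds because $\int\mu_{\omega'}\,\omega'^d$ and $\int_X \Pi(1-e^{h_\omega})\,\omega^d = \int(1-e^{h_\omega})\,\omega^d = 0$ are both determined by the Duistermaat--Heckman measure, which is independent of the metric. So $e^{a}=1$, $\grad_{\omega'}(1-e^{h_{\omega'}})$ is holomorphic, and $\omega'$ is a Mabuchi soliton.
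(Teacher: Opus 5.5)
Your proposal is correct and follows the paper's own route: necessity comes from Example \ref{mc_obst}. For sufficiency you take the affine weight $v$ of Example \ref{ex_v-sol_M-sol}, which is positive exactly because $M_X<1$, check that its $v$-Futaki invariant vanishes as in Remark \ref{M-sol_vFut_vanish} using Propositions \ref{orthogonal _proj} and \ref{mommap_normalization}, and then invoke Theorem \ref{existence_adm_v-sol}. Your separate treatment of $\alpha=0$ and your verification that $e^{a}=1$ are both correct but not needed. Theorem \ref{existence_adm_v-sol} already covers $v\equiv 1$. And since $v$ is affine, $1-e^{a}v(\mu_{\omega'})$ is an affine function of $\mu_{\omega'}$, hence a Killing potential, for any constant $a$.
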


Now we shall prove Theorem \ref{existence_adm_v-sol}. 
The following proof proceeds largely along the lines of \cite[Theorem 3.1]{ACGTF08-4}. 
The implication (1) $\Rightarrow$ (2) follows from Proposition \ref{v-sol_vanish_vFut}. 
The equivalence of (2) and (3) is clear.
Thus, it remains to prove (3) $\Rightarrow$ (1). 
Let $\omega \in \Omega$ be an admissible K\"ahler metric given by \eqref{defn_adm_met}, and define $F \coloneqq p_{\Omega}\Theta \in C^{\infty}([-1, 1], \mathbf{R})$. 
Then $F$ satisfies 
\begin{align}
&F(x) > 0\quad (x \in (-1, 1)), \label{extremal_fct_1} \\
&F(\pm 1) = 0,\quad F'(\pm 1) = \mp 2p_{\Omega}(\pm 1). \label{extremal_fct_2}
\end{align}

\begin{proposition}\label{existence_v-sol_base_KE}
The following conditions are equivalent: 
\begin{enumerate}[(1)]
\item $\omega$ is a $v$-soliton. 
\item $F$ satisfies 
\begin{align}\label{v-sol_eq_F}
\frac{(uF)'(z)}{(up_{\Omega})(z)} = -(d_{0} + d_{\infty} + 2)(z - w). 
\end{align}
\end{enumerate}
\end{proposition}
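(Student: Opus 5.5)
We reduce the $v$-soliton equation to an ODE in $z$ using the explicit Ricci potential of Proposition \ref{adm_Ric_pot}. By the computation preceding Lemma \ref{Ricpot_ext},
\[
\Ric(\omega)-\omega=\tfrac12 dd^{c}h(z),
\]
so $\omega$ is a $v$-soliton if and only if
\[
dd^{c}\bigl(h(z)-\log u(z)\bigr)=0 .
\]
Since $X$ is compact and $h(z)-\log u(z)$ is a smooth function, it is pluriharmonic and hence constant. Therefore (1) is equivalent to $h-\log u$ being constant on $[-1,1]$, i.e.\ to $h'=(\log u)'$ on $(-1,1)$; here we use that $z$ takes every value in $[-1,1]$. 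I would only need the direction that a function of $z$ with vanishing $dd^c$ is constant; the converse is trivial.

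Next, compute $h'$ on $(-1,1)$ as in the proof of Lemma \ref{Ricpot_ext}:
\[
h'(x)=\frac{-(d_\infty-d_0)-2cx-\Theta'(x)}{\Theta(x)}-\frac{p_\Omega'(x)}{p_\Omega(x)} .
\]
Using $F=p_\Omega\Theta$, we have $\Theta'/\Theta+p_\Omega'/p_\Omega=F'/F$, and $\Theta=F/p_\Omega$. Hence
\[
h'=\frac{p_\Omega\bigl(-(d_\infty-d_0)-2cx\bigr)}{F}-\frac{F'}{F}.
\]
Note that $2c=d_0+d_\infty+2$ by \eqref{Fano_cond}, and $-(d_\infty-d_0)-2cx=-(d_0+d_\infty+2)(x-w)$ by the definition of $w$. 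The condition $h'=u'/u$ therefore reads
\[
\frac{u'}{u}+\frac{F'}{F}=-\frac{(d_0+d_\infty+2)(x-w)\,p_\Omega}{F}.
\]
Multiplying by $uF/(up_\Omega)$, which is legitimate since $u>0$ and $F,p_\Omega>0$ on $(-1,1)$, gives exactly \eqref{v-sol_eq_F} on $(-1,1)$. By continuity it then holds on $[-1,1]$, where both sides are smooth; at the endpoints $p_\Omega$ may vanish, so the identity should be read as $(uF)'=-(d_0+d_\infty+2)(z-w)up_\Omega$. Every step is reversible: from \eqref{v-sol_eq_F} on $(-1,1)$ we recover $h'=(\log u)'$, hence $h-\log u$ is constant, hence $dd^c$ of it vanishes.

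The only delicate point is the pluriharmonic-implies-constant step together with the identification $\Ric(\omega)-\omega=\frac12dd^c h(z)$ on all of $X$, not just on $X_0$. This is handled by Lemma \ref{Ricpot_ext} and Proposition \ref{adm_Ric_pot}: $h(z)$ is smooth on $X$ and the identity holds on the dense set $X_0$, hence everywhere. The rest is the algebra above.
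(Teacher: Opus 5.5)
Your proof is correct, but it is organized differently from the paper's. The paper does not use the potential $h$ at all. It writes $\Ric(\omega)-\omega-\frac{1}{2}dd^{c}\log u(z)$ on $X\setminus(e_{0}\cup e_{\infty})$ as a combination of the forms $\varepsilon_{a}\omega_{a}$ ($a\in\hat{\mathcal{A}}$) and $dz\wedge\theta$, and sets each coefficient equal to zero. It then uses $\Ric(\omega_{a})=\varepsilon_{a}s_{a}\omega_{a}$ to turn the $\omega_{a}$-components into the equations $\frac{1}{2}(uF)'/(up_{\Omega})+c(1/x_{a}+z)=s_{a}$, and notes that the $dz\wedge\theta$-component is the derivative of these. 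Finally it uses the Fano values of $c$, $x_{a}$, $s_{a}$ to see that all of them reduce to \eqref{v-sol_eq_F}.

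You instead reuse the global Ricci potential of Lemma \ref{Ricpot_ext} and Proposition \ref{adm_Ric_pot}. The K\"ahler--Einstein condition and the relations $c/x_{a}=s_{a}+\frac{1}{2}(d_{\infty}-d_{0})$ have already been absorbed into it. You then replace the coefficient comparison by the maximum principle: $dd^{c}(h(z)-\log u(z))=0$ on the compact manifold $X$ forces $h-\log u$ to be constant. After that, \eqref{v-sol_eq_F} is just $h'=u'/u$ rewritten with $F=p_{\Omega}\Theta$ and $2c=d_{0}+d_{\infty}+2$.

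The paper's argument is purely local on $X\setminus(e_{0}\cup e_{\infty})$, with the constant of integration fixed pointwise by the base components $\omega_{a}$. Yours is shorter and fixes that constant globally. That is exactly why it needs the inputs you flag at the end: smoothness of $h(z)$ on all of $X$, and the identity $\Ric(\omega)-\omega=\frac{1}{2}dd^{c}h(z)$ holding across $e_{0}\cup e_{\infty}$. In exchange, your route also covers the degenerate case $\hat{\mathcal{A}}=\emptyset$ (i.e.\ $X=\mathbf{P}^{1}$). There only the $dz\wedge\theta$-component survives, and the paper's comparison determines $(uF)'/(up_{\Omega})$ only up to an additive constant.
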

\begin{proof}
From the computation in Section \ref{sec:Ricci_potential}, on $X \setminus (e_{0} \cup e_{\infty})$, 
\begin{align*}
\Ric(\omega) - \omega 
= \sum_{a \in \hat{\mathcal{A}}}\Ric(\omega_{a})-\sum_{a \in \hat{\mathcal{A}}}c\left(\frac{1}{x_{a}} + z\right)\varepsilon_{a}\omega_{a} - \frac{1}{2}dd^{c}\log(F(z)) - c(dz \wedge \theta). 
\end{align*}
Thus, $\omega$ is a $v$-soliton if and only if 
\begin{align}\label{v-sol_eq_F_0}
\begin{split}
\sum_{a \in \hat{\mathcal{A}}}\Ric(\omega_{a})-\sum_{a \in \hat{\mathcal{A}}}c\left(\frac{1}{x_{a}} + z\right)\varepsilon_{a}\omega_{a} - \frac{1}{2}dd^{c}\log(u(z)F(z)) - c(dz \wedge \theta) = 0. 
\end{split}
\end{align}
Computing $dd^{c}\log(u(z)F(z))$, we obtain 
\begin{align*}
dd^{c}\log(u(z)F(z)) 
&= d\left(\frac{(uF)'(z)}{(uF)(z)}d^{c}z\right) 
= d\left(\frac{(uF)'(z)}{(up_{\Omega})(z)}\theta\right) \\
&= \left(\frac{(uF)'}{up_{\Omega}}\right)'(z)dz \wedge \theta + \frac{(uF)'(z)}{(up_{\Omega})(z)}d\theta \\
&= \left(\frac{(uF)'}{up_{\Omega}}\right)'(z)dz \wedge \theta + \frac{(uF)'(z)}{(up_{\Omega})(z)}\sum_{a \in \hat{\mathcal{A}}}\varepsilon_{a}\omega_{a}. 
\end{align*}
Thus, the equation \eqref{v-sol_eq_F_0} is equivalent to 
\begin{align*}
&\sum_{a \in \hat{\mathcal{A}}}\left[\Ric(\omega_{a}) - \left\{\frac{1}{2}\frac{(uF)'(z)}{(up_{\Omega})(z)} + c\left(\frac{1}{x_{a}} + z\right) \right\}\varepsilon_{a}\omega_{a}\right] \\
&\quad - \frac{1}{2}\left\{\left(\frac{(uF)'}{up_{\Omega}}\right)'(z) +2c\right\}dz \wedge \theta = 0, 
\end{align*}
in other words, 
\begin{align}
&\Ric(\omega_{a}) - \left\{\frac{1}{2}\frac{(uF)'(z)}{(up_{\Omega})(z)} + c\left(\frac{1}{x_{a}} + z\right) \right\}\varepsilon_{a}\omega_{a} = 0 \quad (a \in \hat{\mathcal{A}}), \label{v-sol_eq_F_1} \\
&\left(\frac{(uF)'}{up_{\Omega}}\right)'(z) +2c = 0. \label{v-sol_eq_F_2}
\end{align}
Furthermore, since each $\omega_{a}$ is a K\"ahler--Einstein metric the equation \eqref{v-sol_eq_F_1} reduces to \begin{align}\label{v-sol_eq_F_3}
&\frac{1}{2}\frac{(uF)'(z)}{(up_{\Omega})(z)} + c\left(\frac{1}{x_{a}} + z\right) = s_{a}\quad (a \in \hat{\mathcal{A}}). 
\end{align}
The equation \eqref{v-sol_eq_F_2} is obtained by the derivative of \eqref{v-sol_eq_F_3}. 
Thus, $\omega$ is a $v$-soliton if and only if $F$ satisfies \eqref{v-sol_eq_F_3}. 
As in \eqref{Fano_cond}, we have 
\begin{align*}
s_{a} - c\left(\frac{1}{x_{a}} + z\right) 
&= \frac{1}{2}[(d_{0} + 1)(1-z) - (d_{\infty} + 1)(1+z)] \\
&= -\frac{1}{2}(d_{0} + d_{\infty} + 2)(z - w),
\end{align*} 
and hence \eqref{v-sol_eq_F_3} reduces to a single equation \eqref{v-sol_eq_F}. 
\end{proof}

We are now ready to prove the implication (3) $\Rightarrow$ (1). 
Assume $X$ satisfies (3), and consider the boundary value problem 
\begin{align}
&\frac{(uF)'(x)}{(up_{\Omega})(x)} = -(d_{0} + d_{\infty} + 2)(z - w), \label{v-sol_eq_F_4} \\
&F(\pm 1) = 0. \label{v-sol_eq_F_5}
\end{align}
It is clear that any solution of \eqref{v-sol_eq_F_4} is given, up to an additive constant, by
\begin{align*}
F(x) 
= -\frac{d_{0} + d_{\infty} + 2}{u(x)}\int_{-1}^{x}(t - w)u(t)p_{\Omega}(t)\,dt. 
\end{align*}
This function $F$ is smooth on the closed interval $[-1, 1]$ and satisfies $F(-1) = 0$. 
Moreover, by assumption (3), we also have $F(1) = 0$. 

Next, we compute the derivatives at the endpoints. 
We obtain 
\begin{align*}
F'(\pm 1) 
&= -(d_{0} + d_{\infty} + 2)(\pm 1 - w)p_{\Omega}(\pm 1) \\
&= 
\begin{cases}
2(d_{0} + 1)p_{\Omega}(-1) & \text{if $x = -1$}, \\
-2(d_{\infty} + 1)p_{\Omega}(1) & \text{if $x = 1$}. 
\end{cases}
\end{align*}
In particular, 
if $d_{0} > 0$, then $p_{\Omega}(-1) = 0$ and hence $F'(-1) = 2p_{\Omega}(-1) = 0$. 
If $d_{0} = 0$, then clearly $F'(-1) = 2p_{\Omega}(-1)$. 
Thus, in all cases, we have $F'(-1) = 2p_{\Omega}(-1)$. 
A similar argument shows that $F'(1) = -2p_{\Omega}(1)$. 
Therefore, $F$ satisfies the condition \eqref{extremal_fct_2}. 

We now verify that $F$ satisfies the condition \eqref{extremal_fct_1}. 
It suffices to show that $uF > 0$ on the open interval $(-1, 1)$. 
First note that $w \in (-1, 1)$ because 
\begin{align*}
w 
= \frac{d_{0} - d_{\infty}}{d_{0} + d_{\infty} + 2} 
\geq -\frac{d_{\infty}}{d_{0} + d_{\infty} + 2} 
> -1, 
\end{align*}
and 
\begin{align*}
w 
= \frac{d_{0} - d_{\infty}}{d_{0} + d_{\infty} + 2} 
\leq \frac{d_{0}}{d_{0} + d_{\infty} + 2} 
< 1. 
\end{align*}
Let $x \in (-1, 1)$ arbitrarily. 
If $x \leq w$, then $t - w < 0$ on $[-1, x)$, and hence 
\begin{align*}
(uF)(x) 
= -(d_{0} + d_{\infty} + 2)\int_{-1}^{x}(t - w)u(t)p_{\Omega}(t)\,dt 
> 0. 
\end{align*}
If $x > w$, then $t - w > 0$ on $[x, 1)$, and we obtain 
\begin{align*}
(uF)(x) 
&= -(d_{0} + d_{\infty} + 2)\left[\int_{-1}^{1}(t - w)u(t)p_{\Omega}(t)\,dt - \int_{x}^{1}(t - w)u(t)p_{\Omega}(t)\,dt\right] \\
&= (d_{0} + d_{\infty} + 2)\int_{x}^{1}(t - w)u(t)p_{\Omega}(t)\,dt \\
&> 0
\end{align*}
Thus, $uF > 0$ holds for all $x \in (-1, 1)$. 

Finally, define $\Theta \colon (-1, 1) \to \mathbf{R}$ by $\Theta \coloneqq F/p_{\Omega}$. 
It is clear that $\Theta$ is a positive smooth function on the open interval $(-1, 1)$, and from the expression  
\begin{align*}
\Theta(x) 
&= -\frac{d_{0} + d_{\infty} + 2}{u(x)p_{\Omega}(x)}\int_{-1}^{x}(t - w)u(t)p_{\Omega}(t)\,dt, 
\end{align*}
we analyze its behavior near $x = -1$. 
Since $u(x), x-w \neq 0$ near $x = -1$, we can write 
\begin{align*}
&u(x)p_{\Omega}(x) = (1 + x)^{d_{0}}g_{-}(x),\quad 
\int_{-1}^{x}(t - w)u(t)p_{\Omega}(t)\,dt = (1 + x)^{d_{0}+1}h_{-}(x), 
\end{align*}
where $g_{-}$, $h_{-}$ are smooth around $x = -1$ and nonvanishing at $x = -1$. 
Then, we have
\begin{align*}
(1 + x)^{d_{0}}(x - w)g_{-}(x) &= (x - w)u(x)p_{\Omega}(x) \\
&= \left(\int_{-1}^{x}(t - w)u(t)p_{\Omega}(t)\,dt\right)'  \\
&= (d_{0}+1)(1 + x)^{d_{0}}h_{-}(x) + (1 + x)^{d_{0}+1}h_{-}'(x) \\
&= (1 + x)^{d_{0}}\left[(d_{0}+1)h_{-}(x) + (1 + x)h_{-}'(x) \right]
\end{align*}
and 
\begin{align*}
(x - w)g_{-}(x) 
= (d_{0}+1)h_{-}(x) + (1 + x)h_{-}'(x). 
\end{align*}
This shows that 
\begin{align*}
\Theta(x) 
&= -\frac{(d_{0} + d_{\infty} + 2)(1 + x)^{d_{0}+1}h_{-}(x)}{(1 + x)^{d_{0}}g_{-}(x)} \\
&= -(1 + x)\frac{(d_{0} + d_{\infty} + 2)h_{-}(x)}{g_{-}(x)}, 
\end{align*}
and hence $\Theta$ extends smoothly to $x = -1$ with 
\begin{align*}
\Theta(-1) = 0,\quad \Theta'(-1) = 2. 
\end{align*}
A similar argument shows that $\Theta$ extends smoothly to $x = 1$ with 
\begin{align*}
\Theta(1) = 0,\quad \Theta'(1) = -2. 
\end{align*}
Therefore, $\Theta$ satisfies all the required conditions, and hence defines an admissible K\"ahler metric $\omega \in \Omega$. 
By construction, this metric is a $v$-soliton. 
This completes the proof of Theorem \ref{existence_adm_v-sol}. 

\section{The Mabuchi constant of Fano admissible manifolds}\label{prf_Main_thm_1}
The purpose of this section is to prove Theorem \ref{Main_thm_1}. 
Let $X = \mathbf{P}(\mathcal{O}^{\oplus(d_{0}+1)} \oplus (\mathcal{L} \otimes \mathcal{O}^{\oplus(d_{\infty}+1)}))$ be a Fano admissible manifold, and let $\Omega \coloneqq 2\pi c_{1}(X)$. 
For $i \in \{0, 1, 2\}$, set 
\begin{align*}
b_{i} \coloneqq \int_{-1}^{1}x^{i}p_{\Omega}(x)\,dx. 
\end{align*}
Since $p_{\Omega} > 0$ on $(-1,1)$, we have $b_{0}, b_{2} > 0$. 
Moreover, by the Cauchy--Schwarz inequality, $b_{0}b_{2}-b_{1}^{2} > 0$. 

Using Propositions \ref{orthogonal _proj} and \ref{mommap_normalization} in Section 4.2, we obtain the following. 

\begin{theorem}[{\cite[Theorem 6.5]{Ma01}} for cases $d_{0} = d_{\infty} = 0$]\label{M-const_proj_bdle}
Define $w \coloneqq \dfrac{d_{0} - d_{\infty}}{d_{0} + d_{\infty} + 2}$. 
Then 
\begin{align*}
M_{X} 
= \frac{b_{0}|b_{1} - wb_{0}| - b_{1}(b_{1} - wb_{0})}{b_{0}b_{2} - b_{1}^{2}} 
= 1 + \frac{b_{0}(|b_{1} - wb_{0}| - (b_{2} - wb_{1}))}{b_{0}b_{2}-b_{1}^{2}}. 
\end{align*}
\end{theorem}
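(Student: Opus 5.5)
By Proposition \ref{orthogonal _proj}, $\Pi_{\omega}^{G}(1-e^{h_{\omega}}) = \alpha z + \beta$ for constants $\alpha,\beta$. Since $z(X)=[-1,1]$, this gives $M_{X} = |\alpha| + \beta$. The whole task is therefore to compute $\alpha$ and $\beta$ in terms of $b_0,b_1,b_2$ and $w$. The plan is to use the defining property of the $L^{2}$-projection, tested against the two Killing potentials $1$ and $z$.

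First, since $1$ and $z$ lie in $P_{\omega}$ (constants, and the moment map of the $S^{1}$-action), orthogonality of $1-e^{h_{\omega}} - (\alpha z+\beta)$ to both gives two equations:
\begin{align*}
\int_X (\alpha z+\beta)\frac{\omega^d}{d!} = \int_X (1-e^{h_\omega})\frac{\omega^d}{d!}, \qquad
\int_X z(\alpha z+\beta)\frac{\omega^d}{d!} = \int_X z(1-e^{h_\omega})\frac{\omega^d}{d!}.
\end{align*}
The first right-hand side vanishes by the normalization of $h_\omega$. For the second, Proposition \ref{mommap_normalization} gives $\int_X z e^{h_\omega}\omega^d/d! = w\int_X e^{h_\omega}\omega^d/d! = w\int_X \omega^d/d!$. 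Here the last equality again uses $\int_X(1-e^{h_\omega})=0$. Hence the second right-hand side equals $\int_X (z-w)\,\omega^d/d!$.

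Next, all integrals are of functions of $z$, so Proposition \ref{prop_chara_poly} converts them into integrals against $p_\Omega$, with a common constant that cancels. The system becomes
\begin{align*}
\alpha b_1 + \beta b_0 = 0,\qquad \alpha b_2 + \beta b_1 = b_1 - w b_0.
\end{align*}
Its determinant $b_0b_2-b_1^2$ is positive, and Cramer's rule gives
\begin{align*}
\alpha = \frac{b_0(b_1-wb_0)}{b_0b_2-b_1^2},\qquad \beta = -\frac{b_1(b_1-wb_0)}{b_0b_2-b_1^2}.
\end{align*}
Since $b_0>0$, we have $|\alpha| = b_0|b_1-wb_0|/(b_0b_2-b_1^2)$, and $M_X=|\alpha|+\beta$ yields the first formula. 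The second formula follows from the algebraic identity
\[
-b_1(b_1-wb_0) = (b_0b_2-b_1^2) - b_0(b_2-wb_1).
\]

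The main point needing care is justifying that only $1$ and $z$ need to be tested. The projection is onto all of $P_\omega$, which may be larger than the span of $1$ and $z$ (for example, Killing potentials coming from the $Y_a$ or from $\mathbf{P}^{d_0}$ and $\mathbf{P}^{d_\infty}$). However, Proposition \ref{orthogonal _proj} already asserts that the projection lies in $\mathrm{span}\{1,z\}$. Since the residual $1-e^{h_\omega}-\Pi_{\omega}^{G}(1-e^{h_\omega})$ is orthogonal to all of $P_\omega$, it is in particular orthogonal to $1$ and $z$, so the two equations above determine $\alpha$ and $\beta$. I would also check that the $G$-invariance conventions for $\Pi^{G}_\omega$ do not affect this: $z$ is $G$-invariant because the $S^1$-action is central in the relevant automorphism group.
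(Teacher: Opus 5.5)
Your proposal is correct and follows the paper's proof essentially step for step. Like the paper, you test the residual against the Killing potentials $1$ and $z$. You then use Proposition \ref{mommap_normalization} together with $\int_X(1-e^{h_\omega})\,\omega^d/d! = 0$ to get $\int_X z(1-e^{h_\omega})\,\omega^d/d! = \int_X (z-w)\,\omega^d/d!$, convert to integrals against $p_\Omega$ via Proposition \ref{prop_chara_poly}, solve the $2\times 2$ system, and take $\max_{[-1,1]}(\alpha z+\beta)=|\alpha|+\beta$.

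Your second equation $\alpha b_2+\beta b_1=b_1-wb_0$ is the right one: the paper's display \eqref{Comp_M_const_2} mistakenly has $\alpha b_1+\beta b_0$ on the left. Your explicit identity $-b_1(b_1-wb_0)=(b_0b_2-b_1^2)-b_0(b_2-wb_1)$ also supplies the step behind the second formula, which the paper leaves unstated.
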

\begin{proof}
Let $\omega \in \Omega$ be an admissible K\"ahler metric given by \eqref{defn_adm_met}. 
Since constant functions and $z$ are Killing potentials of $(X, \omega)$, Propositions \ref{prop_chara_poly} and \ref{orthogonal _proj} imply that 
\begin{align*}
0 
&= \int_{X}(1-e^{h_{\omega}})\frac{\omega^{d}}{d!} 
= \int_{X}\Pi_{\omega}^{G}(1-e^{h_{\omega}})\frac{\omega^{d}}{d!} \\
&= \int_{X}(\alpha z + \beta)\frac{\omega^{d}}{d!} 
= 2\pi c^{d}\left(\prod_{a \in \hat{\mathcal{A}}}\vol(Y_{a}, \omega_{a})\right)(\alpha b_{1} + \beta b_{0}). 
\end{align*}
Hence we obtain 
\begin{align}\label{Comp_M_const_1}
\alpha b_{1} + \beta b_{0} = 0. 
\end{align}
Furthermore, 
\begin{align*}
\int_{X}z(1-e^{h_{\omega}})\frac{\omega^{d}}{d!} 
&= \int_{X}z\Pi_{\omega}^{G}(1-e^{h_{\omega}})\frac{\omega^{d}}{d!} 
= \int_{X}(\alpha z^{2} + \beta z)\frac{\omega^{d}}{d!} \\
&= 2\pi c^{d}\left(\prod_{a \in \hat{\mathcal{A}}}\vol(Y_{a}, \omega_{a})\right)(\alpha b_{2} + \beta b_{1}). 
\end{align*}
On the other hand, by Proposition \ref{mommap_normalization}, we have 
\begin{align*}
\int_{X}z(1-e^{h_{\omega}})\frac{\omega^{d}}{d!} 
&= \int_{X}z\frac{\omega^{d}}{d!}  - w\int_{X}e^{h_{\omega}}\frac{\omega^{d}}{d!} 
= \int_{X}z\frac{\omega^{d}}{d!}  - w\int_{X}\frac{\omega^{d}}{d!} \\
&= 2\pi c^{d}\left(\prod_{a \in \hat{\mathcal{A}}}\vol(Y_{a}, \omega_{a})\right)(b_{1} -wb_{0}). 
\end{align*}
These give us that 
\begin{align}\label{Comp_M_const_2}
\alpha b_{1} + \beta b_{0} 
= b_{1} -wb_{0}. 
\end{align}
By solving \eqref{Comp_M_const_1} and \eqref{Comp_M_const_2}, we obtain 
\begin{align*}
\alpha = \frac{b_{0}b_{1}-wb_{0}^{2}}{b_{0}b_{2}-b_{1}^{2}},\quad 
\beta = -\frac{b_{1}^{2} - wb_{0}b_{1}}{b_{0}b_{2}-b_{1}^{2}}, 
\end{align*}
and 
\begin{align*}
M_{X} 
&= \max_{x \in [-1,1 ]}(\alpha z + \beta) 
= |\alpha| + \beta \\
&= \frac{b_{0}|b_{1} - wb_{0}| - b_{1}(b_{1} - wb_{0})}{b_{0}b_{2} - b_{1}^{2}} \\
&= 1 + \frac{b_{0}(|b_{1} - wb_{0}| - (b_{2} - wb_{1}))}{b_{0}b_{2}-b_{1}^{2}}, 
\end{align*}
as required. 
\end{proof}

\section{Mabuchi solitons on Fano admissible manifolds over $\mathbf{P}^{n}$}\label{sec_computation}
In the final section, we concern the existence and non-existence of Mabuchi solitons on Fano admissible manifolds over $\mathbf{P}^{n}$. 
Let $X$ be a Fano admissible manifold over $\mathbf{P}^{n}$. 
Then $X$ can be written as 
\begin{align}\label{Fano_adm_over_P^n}
X = \mathbf{P}_{\mathbf{P}^n}(\mathcal{O}^{\oplus(d_{0}+1)}\oplus\mathcal{O}(k)^{\oplus(d_{\infty}+1)}) 
\end{align}
for some $k \in \mathbf{Z} \setminus \{0\}$ and $d_{0}, d_{\infty} \in \mathbf{Z}_{\geq 0}$. 
In that case, 
\[
\mathbf{P}_{\mathbf{P}^n}(\mathcal{O}^{\oplus(d_{0}+1)}\oplus\mathcal{O}(k)^{\oplus(d_{\infty}+1)}) \cong \mathbf{P}_{\mathbf{P}^n}(\mathcal{O}^{\oplus(d_{\infty}+1)}\oplus\mathcal{O}(-k)^{\oplus(d_{0}+1)})
\]
and so we may assume $k \in \mathbf{Z}_{\geq 1}$. 
By Theorem \ref{adm_antican_bdle}, $X$ is Fano if and only if $k(d_{0}+1)<n+1$. 
In what follows, we assume these conditions. 
Moreover, by Corollary \ref{existence_adm_M-sol}, the existence of a Mabuchi soliton on $X$ is equivalent to $M_{X}< 1$. 
The purpose of this section is to prove the following theorem. 

\begin{theorem}\label{M-sol_adm/P^n} 
The Fano admissible manifold $X$ admits a Mabuchi soliton if and only if 
\[
(k, d_{\infty})=(1,0)\quad \text{or}\quad (n, k, d_{0}, d_{\infty}) = (1, 1, 0, 1). 
\]
In all other cases, $M_{X} > 1$. 
\end{theorem}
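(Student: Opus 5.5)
We will combine Theorem \ref{M-const_proj_bdle} with Corollary \ref{existence_adm_M-sol}. This turns the statement into a sign condition on explicit one-variable integrals.

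\textbf{Setup.} For $X$ as in \eqref{Fano_adm_over_P^n} we have $\mathcal{A}=\{1\}$ with $Y_1=\mathbf{P}^n$ and $d_1=n$. We take $\varepsilon_1=1$ and normalize $\omega_1$ so that $[\omega_1/2\pi]=c_1(\mathcal{O}(k))$. Then $s_1=(n+1)/k$, and by \eqref{Fano_cond}
\[
\lambda\coloneqq \frac1{x_1}=\frac{2(n+1)/k+d_\infty-d_0}{N},\qquad N\coloneqq d_0+d_\infty+2 .
\]
The Fano condition $k(d_0+1)<n+1$ gives $\lambda>1$. Write $p_\Omega=q\,r$ with $q(x)=(1+x)^{d_0}(1-x)^{d_\infty}$ and $r(x)=(\lambda+x)^n$.

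\textbf{Reduction of $M_X<1$ to one integral.} By Theorem \ref{M-const_proj_bdle}, $M_X<1$ holds iff $|b_1-wb_0|<b_2-wb_1$, and $M_X>1$ iff the reverse strict inequality holds. Now
\[
(b_2-wb_1)\mp(b_1-wb_0)=\int_{-1}^1(x\mp1)(x-w)p_\Omega\,dx .
\]
Hence $M_X<1$ iff
\[
\int_{-1}^1(1+x)(x-w)p_\Omega\,dx>0\quad\text{and}\quad \int_{-1}^1(1-x)(x-w)p_\Omega\,dx<0 .
\]
Set $Q(x)=(1+x)^{d_0+1}(1-x)^{d_\infty+1}$. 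The key identity is $Q'=-N(x-w)q$. Integrating by parts, and using that $Q$ vanishes at $\pm1$, we get
\[
\int_{-1}^1(1\pm x)(x-w)p_\Omega\,dx=\frac1N\int_{-1}^1Q\,\bigl((1\pm x)r\bigr)'\,dx .
\]
For the plus sign, $((1+x)r)'=r+n(1+x)(\lambda+x)^{n-1}>0$, so the first condition always holds. Therefore $M_X<1$ iff $I<0$, and $M_X>1$ iff $I>0$, where
\[
I\coloneqq\int_{-1}^1 Q(x)(\lambda+x)^{n-1}\bigl[(n+1)(1-x)-(\lambda+1)\bigr]dx .
\]

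\textbf{Evaluating the sign of $I$.} Substitute $x=2t-1$ and write $\lambda+x=(\lambda-1)+2t$. Expanding $(\lambda+x)^{n-1}$ binomially turns $I$ into a finite sum of Beta integrals $B(d_0+2+j,\,d_\infty+2)$ and $B(d_0+2+j,\,d_\infty+3)$. Each summand is a positive coefficient $\binom{n-1}{j}(\lambda-1)^{n-1-j}2^j$ times the bracket
\[
(n+1)\frac{2(d_\infty+2)}{d_0+d_\infty+j+4}-(\lambda+1),
\]
up to a common positive Beta factor. I will treat $I$ as a function of $\lambda$ (equivalently of $k$) and organize the argument in three cases.

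First, for $k=1$ and $d_\infty=0$ I expect to show $I<0$ directly. Here $\lambda+1=(2n+4)/(d_0+2)$, and I would compare it termwise with $(n+1)\cdot 4/(d_0+j+4)$, absorbing any positive terms with the largest-$j$ negative ones.

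Second, for $d_\infty\ge1$ or $k\ge2$ I expect $I>0$. When $d_\infty$ grows, the factor $2(d_\infty+2)/(d_0+d_\infty+j+4)$ increases. When $k$ grows, $\lambda$ decreases and shifts weight toward small $j$, where the bracket is largest.

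Third, the exceptional case $(n,k,d_0,d_\infty)=(1,1,0,1)$, which is $\mathbf{P}^1\times\mathbf{P}^2$, is settled by direct evaluation: there $w=-1/3$, $\lambda=2$, and $I$ is a one-line computation (indeed $I<0$ since $X$ is K\"ahler--Einstein-adjacent; it is a product, so I expect $M_X=0$).

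\textbf{Main obstacle.} The main obstacle is the uniform sign analysis of this binomial–Beta sum in the boundary regime where the bracket changes sign in $j$. This happens near $d_\infty=1$ with small $n$, and near $k=2$ with $d_0=0$. My first attempt would be to pair the $j$-th and $(n-1-j)$-th terms, or to use a second integration by parts in $t$ to rewrite $I$ as $\int Q\cdot(\text{polynomial with one sign change})$. I would then compare with the weight $(1-x)$ by a Chebyshev-type inequality. Any small-$n$ cases that resist this would be checked by hand, confirming that $I=0$ never occurs, so that $M_X>1$ strictly outside the listed cases.
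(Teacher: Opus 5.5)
Your reduction is correct, and it differs slightly from the paper's. Instead of proving $b_1-wb_0>0$ (Proposition \ref{nonvanishing_Futaki}), you show $\int_{-1}^1(1+x)(x-w)p_\Omega\,dx>0$ from $Q'=-N(x-w)q$. This equally gives $M_X<1\iff I<0$ and $M_X>1\iff I>0$; your $I$ is $N$ times the paper's $I=\int_{-1}^1(1-x)(x-w)p_\Omega\,dx$. But the theorem \emph{is} the determination of the sign of $I$ for every $(n,k,d_0,d_\infty)$ with $k(d_0+1)<n+1$. At that point you only record expectations. You locate the delicate regimes correctly, but you give no inequality that settles them, so the proof has a genuine gap.

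In the non-existence cases the missing ingredient is a quantitative lower bound. Set $a=(\lambda-1)/2$, $K=\int_0^1t^{d_0+1}(1-t)^{d_\infty+2}(a+t)^{n-1}dt$ and $L=\int_0^1t^{d_0}(1-t)^{d_\infty+2}(a+t)^{n-1}dt$. The paper writes $I$ as a positive multiple of $(d_\infty+1)nK-(d_0+1)(aL+K)$. Chebyshev's integral inequality, using only that $(a+t)^{n-1}$ is increasing, then gives $K/L\ge(d_0+1)/(d_0+d_\infty+4)$, strictly for $n\ge2$. This reduces $I>0$ to the explicit inequality $\frac{n(d_\infty+1)-(d_0+1)}{d_0+d_\infty+4}\ge a$ of Lemma \ref{inequity}. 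An elementary case analysis verifies it whenever $k\ge2$ or $d_\infty\ge1$, except at $(1,1,0,1)$ and $(2,1,0,1)$. Your plan does not identify the second exception, which needs its own computation ($I=16/405>0$ in the paper's normalization). Also, for $n=1$ the Chebyshev step is an equality, so excluding $I=0$ takes a separate argument.

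For $k=1$, $d_\infty=0$ you need an \emph{upper} bound, and ``absorbing positive terms'' is not yet an argument. In your normalization the $j$-th bracket has numerator $2nd_0-8-2(n+2)j$, so it is positive for small $j$ once $nd_0>4$. Meanwhile the binomial part of the weights, $\binom{n-1}{j}a^{n-1-j}$, is centred at $(n-1)(d_0+2)/(n+2)$, only one to two units past the sign change. The paper integrates by parts differently and writes $I$ as a positive multiple of $m_1-c\,m_0$. Here $m_0,m_1$ are the zeroth and first moments of $(a+u)^{n-1}u^{d_0}(1-u)^2\,du$, and $c=\frac{(n-d_0)(d_0+1)}{(n-d_0-1)(d_0+2)}$ (the case $n=d_0+1$ is immediate). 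It then bounds $m_1/m_0=\sum_jr_jf(j)$, with $f(t)=(t+d_0+1)/(t+d_0+4)$ concave and increasing, by Jensen plus a second Chebyshev inequality. What remains is a polynomial inequality in $n,d_0$.

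Your treatment of $(1,1,0,1)$ is also wrong in its details:
\begin{enumerate}
\item $\mathbf{P}_{\mathbf{P}^1}(\mathcal{O}\oplus\mathcal{O}(1)^{\oplus2})$ is not $\mathbf{P}^1\times\mathbf{P}^2$; it is the blow-up of $\mathbf{P}^3$ along a line.
\item Your own formula gives $\lambda=5/3$, not $2$.
\item $M_X=0$ is impossible: $b_1-wb_0>0$, so the Futaki invariant is nonzero (Corollary \ref{nonexistence_KE}). In fact $M_X=35/43$.
\end{enumerate}

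Finally, your heuristic for $k$ runs the wrong way. Decreasing $\lambda-1$ relatively damps the small-$j$ terms, which carry the high powers $(\lambda-1)^{n-1-j}$. So increasing $k$ moves weight toward large $j$, where the bracket is smallest. This competes with the decrease of $\lambda+1$ rather than reinforcing it.
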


To prove Theorem \ref{M-sol_adm/P^n}, we compute $M_{X}$ by using Theorem \ref{M-const_proj_bdle}. 
Let
\[
\lambda \coloneq \frac{2(n+1)+k(d_{\infty}-d_{0})}{k(d_{0}+d_{\infty}+2)}. 
\]
Then, for each $i \in \{0,1,2\}$, 
\[
b_{i}=\int_{-1}^{1}(\lambda + x)^{n}(1+x)^{d_{0}}(1-x)^{d_{\infty}}x^{i}dx. 
\]

\begin{proposition}\label{nonvanishing_Futaki}
$b_{1}-wb_{0} > 0$. 
In particular, we have 
\[
M_X=1+\frac{b_0(b_1-wb_0+wb_1-b_2)}{b_0b_2-b_1^2}. 
\]
\end{proposition}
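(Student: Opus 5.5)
We have to show $\int_{-1}^{1}(x-w)q(x)\,dx>0$, where $q(x)=(\lambda+x)^n(1+x)^{d_0}(1-x)^{d_\infty}$. Once this is proved, $|b_1-wb_0|=b_1-wb_0$, and the formula for $M_X$ follows by substituting into Theorem \ref{M-const_proj_bdle}.

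The plan is to split off the factor $(\lambda+x)^n$ from a base weight $r(x)\coloneqq(1+x)^{d_0}(1-x)^{d_\infty}$ that is balanced around $w$. The first step is to check that
\[
\int_{-1}^{1}(x-w)\,r(x)\,dx=0.
\]
To see this, substitute $x=2t-1$, so the integral becomes a Beta integral. The mean of $\mathrm{Beta}(d_0+1,d_\infty+1)$ is $(d_0+1)/(d_0+d_\infty+2)$, and $2\cdot\frac{d_0+1}{d_0+d_\infty+2}-1=w$, so the mean of $x$ under $r$ is exactly $w$. Equivalently, $(x-w)r(x)=-\frac{1}{d_0+d_\infty+2}\bigl((1+x)^{d_0+1}(1-x)^{d_\infty+1}\bigr)'$, which integrates to zero. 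This is the same identity as the case $u\equiv1$ of the KE computation in Theorem \ref{existence_adm_v-sol}, now applied to the fibre factor alone.

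Next, $g(x)\coloneqq(\lambda+x)^n$ is strictly increasing on $[-1,1]$ when $n\ge1$, since $\lambda>1$. To check $\lambda>1$, note that $\lambda>1$ is equivalent to $2(n+1)>2k(d_0+1)$, which is exactly the Fano condition $k(d_0+1)<n+1$; this agrees with $0<\varepsilon_a x_a<1$ from Proposition \ref{AKC_equiv}. Then
\[
b_1-wb_0=\int_{-1}^{1}(x-w)\bigl(g(x)-g(w)\bigr)\,r(x)\,dx,
\]
using the vanishing from the first step. The integrand is positive for $x\neq w$, because $x-w$ and $g(x)-g(w)$ have the same sign and $r>0$ on $(-1,1)$. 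Hence $b_1-wb_0>0$; this is a Chebyshev-type correlation argument.

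The main point to get right is the identification of the mean under $r$ with $w$; the rest is routine. Finally, substitute $|b_1-wb_0|=b_1-wb_0$ into the second expression of Theorem \ref{M-const_proj_bdle} to obtain the stated formula for $M_X$.
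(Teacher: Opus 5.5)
Your proof is correct, and it rests on the same key identity as the paper's proof,
\[
(x-w)(1+x)^{d_0}(1-x)^{d_\infty}=-\frac{1}{d_0+d_\infty+2}\bigl((1+x)^{d_0+1}(1-x)^{d_\infty+1}\bigr)',
\]
which the paper writes after substituting $x=2u-1$. The two arguments use this identity differently.

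You extract only its consequence $\int_{-1}^{1}(x-w)r(x)\,dx=0$. You then conclude from the pointwise sign of $(x-w)\bigl(g(x)-g(w)\bigr)$, which is a Chebyshev-type correlation argument. The paper uses the same device later, in the symmetrized double integral of Lemma \ref{inequity}.

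The paper instead integrates by parts against $(a+u)^n$, with $a=(\lambda-1)/2>0$. This gives the closed form
\[
b_1-wb_0=\frac{n2^{d_0+d_\infty+n+2}}{d_0+d_\infty+2}\int_0^1(a+u)^{n-1}u^{d_0+1}(1-u)^{d_\infty+1}\,du,
\]
which it then expands in Beta functions.

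Your route needs only strict monotonicity of $(\lambda+x)^n$ on $[-1,1]$, so it works verbatim for any increasing weight. The paper's route yields an explicit, manifestly positive formula for $b_1-wb_0$, i.e. for the size of the Futaki invariant. Both rely on the Fano condition in the same way: as $\lambda>1$ for you, and as $a>0$ for the paper.

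For the sign statement and the resulting formula for $M_X$, the two approaches are equally effective.
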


\begin{proof}
Define $a \in \mathbf{R}$ by 
\[
a \coloneqq \dfrac{n+1-k(d_0+1)}{k(d_0+d_\infty+2)}. 
\]
By assumption, $a > 0$ and 
  \[
  \lambda-1
=\frac{2[n+1-k(d_0+1)]}{k(d_0+d_\infty+2)}
=2a. 
  \]
Therefore, 
\begin{align*}
b_{1} - wb_{0} 
&=\int_{-1}^1(\lambda+x)^n(1+x)^{d_0}(1-x)^{d_\infty}(x-w)\,dx \\
&=\int_{0}^{1}(\lambda+2u-1)^n(2u)^{d_0}(2-2u)^{d_\infty}(2u-1-w)\,du \\
&= 2^{d_{0} + d_{\infty} + n + 2}\int_{0}^{1}(a + u)^n u^{d_0}(1 - u)^{d_\infty}\left(u - \frac{d_{0} + 1}{d_{0} + d_{\infty} + 2}\right)\,du. 
\end{align*}
On the other hand, 
\begin{align*}
&(d_{0} + d_{\infty} + 2) u^{d_0}(1 - u)^{d_\infty}\left(u - \frac{d_{0} + 1}{d_{0} + d_{\infty} + 2}\right) \\
&= (d_{0} + d_{\infty} + 2) u^{d_0 + 1}(1 - u)^{d_\infty} - (d_{0} + 1)u^{d_0}(1 - u)^{d_\infty} \\
&= u^{d_0}(1 - u)^{d_\infty}[(d_{0} + d_{\infty} + 2)u - (d_{0} + 1)] \\&= u^{d_0}(1 - u)^{d_\infty}[(d_{0} + 1)(u-1) + (d_{\infty} + 1)u] \\
&= -(u^{d_0 + 1}(1 - u)^{d_\infty + 1})'. 
\end{align*}
Hence, integration by parts yields 
\begin{align*}
b_{1} - wb_{0} 
&=-\frac{2^{d_{0} + d_{\infty} + n + 2}}{d_{0} + d_{\infty} + 2}\int_{0}^{1}(a + u)^n (u^{d_0 + 1}(1 - u)^{d_\infty + 1})'\,du \\
&=\frac{n2^{d_{0} + d_{\infty} + n + 2}}{d_{0} + d_{\infty} + 2}\int_{0}^{1}(a + u)^{n-1} u^{d_0 + 1}(1 - u)^{d_\infty + 1}\,du \\
&= \frac{n2^{d_{0} + d_{\infty} + n + 2}}{d_{0} + d_{\infty} + 2}\sum_{j=0}^{n-1}\binom{n-1}{j}a^{n-1-j}B(j+d_{0}+2, d_{\infty}+2), 
\end{align*}
where $B(x, y)$ is the beta function. 
Therefore, $b_{1} - wb_{0} > 0$ follows from $a > 0$. 
The second assertion follows immediately from Theorem \ref{M-const_proj_bdle}. 
\end{proof}

Using Corollary \ref{existence_adm_KE}, we obtain the following corollary. 

\begin{corollary}\label{nonexistence_KE}
For any $k \in \mathbf{Z}_{\ge 1}$ and $d_{0}, d_{\infty} \in \mathbf{Z}_{\geq 0}$, $X = \mathbf{P}_{\mathbf{P}^n}(\mathcal{O}^{\oplus(d_{0}+1)}\oplus\mathcal{O}(k)^{\oplus(d_{\infty}+1)})$ does not admit a K\"ahler--Einstein metric. 
\end{corollary}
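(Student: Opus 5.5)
By Corollary \ref{existence_adm_KE}, $X$ admits a $G$-invariant K\"ahler--Einstein metric if and only if
\[
\int_{-1}^{1}(x-w)p_{\Omega}(x)\,dx = b_{1} - wb_{0}
\]
vanishes. For $X$ over $\mathbf{P}^{n}$ as in \eqref{Fano_adm_over_P^n}, we have $p_{\Omega}(x) = (\lambda+x)^{n}(1+x)^{d_{0}}(1-x)^{d_{\infty}}$, possibly up to a positive constant coming from the normalization of $\lambda_{a} = \varepsilon_{a}/x_{a}$. This constant does not affect whether the integral vanishes. Proposition \ref{nonvanishing_Futaki} shows $b_{1} - wb_{0} > 0$. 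Hence the Futaki invariant is nonzero, and $X$ admits no $G$-invariant K\"ahler--Einstein metric.

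It remains to drop the $G$-invariance. Suppose $X$ carries some K\"ahler--Einstein metric. By Matsushima's theorem, $\Aut^{0}(X)$ is then reductive and the isometry group of that metric is a maximal compact subgroup. Maximal compact subgroups are conjugate, so pulling the metric back by a suitable automorphism gives a $G$-invariant K\"ahler--Einstein metric. This contradicts the previous paragraph. Alternatively, one can bypass invariance: the classical Futaki invariant is an obstruction to any K\"ahler--Einstein metric, and evaluated on the $S^{1}$-generator $T$ it equals a positive multiple of $b_{1} - wb_{0}$.

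The only step needing care is matching the characteristic polynomial. Here $\mathcal{A}$ is a single index with $Y_{a} = \mathbf{P}^{n}$ and $d_{a} = n$. One must check that $\varepsilon_{a}/x_{a} = \lambda$ and $\varepsilon_{a} = 1$ for $k \geq 1$, using $x_{a}$ from \eqref{Fano_cond} with $s_{a}$ determined by the normalization $\omega_{a} = 2\pi c_{1}(\mathcal{O}(k))$, so $s_{a} = (n+1)/k$. Then $2s_{a} + d_{\infty} - d_{0} = (2(n+1) + k(d_{\infty} - d_{0}))/k$ gives exactly $1/x_{a} = \lambda$, so the integral in Corollary \ref{existence_adm_KE}(3) is precisely $b_{1} - wb_{0}$, and the corollary follows.
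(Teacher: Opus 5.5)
Your proposal is correct and follows the paper's route: the paper deduces the corollary directly from Corollary \ref{existence_adm_KE} together with $b_{1}-wb_{0}>0$ from Proposition \ref{nonvanishing_Futaki}, under the section's standing Fano assumption $k(d_{0}+1)<n+1$. Your other two steps are also correct and only make explicit what the paper leaves implicit. These are the check that $\varepsilon_{a}=1$, $s_{a}=(n+1)/k$ and $1/x_{a}=\lambda$, and the Matsushima/conjugacy argument (or the classical Futaki invariant) that passes from $G$-invariant to arbitrary K\"ahler--Einstein metrics.
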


Now, set 
\[
I\coloneq b_1-wb_0+wb_1-b_2. 
\]
Then 
\begin{align*}
I
  &=\int_{-1}^1(\lambda+x)^n(1+x)^{d_0}(1-x)^{d_\infty+1}(x-w)dx
\end{align*}
holds. 
Moreover, the condition $M_{X} \leq 1$ (resp. $M_{X} = 1$) is equivalent to $I  \leq 0$ (resp. $I = 0$). 

\begin{lemma}\label{inequity}
If the following inequality holds, then $M_{X} > 1$ holds: 
\begin{equation}\label{eq1}
\frac{n(d_\infty+1)-(d_0+1)}{d_0+d_\infty+4}\ge\frac{n+1-k(d_0+1)}{k(d_0+d_\infty+2)}.     
\end{equation}
In particular, in that case $X$ does not admit a Mabuchi soliton. 
\end{lemma}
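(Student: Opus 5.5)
The plan is to prove that $I>0$ whenever \eqref{eq1} holds. By the discussion right after Corollary~\ref{nonexistence_KE}, $I>0$ is equivalent to $M_X>1$, and Corollary~\ref{existence_adm_M-sol} then rules out a Mabuchi soliton.

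First, as in the proof of Proposition~\ref{nonvanishing_Futaki}, I substitute $x=2u-1$. Write $D\coloneqq d_0+d_\infty$ and $a=\frac{n+1-k(d_0+1)}{k(D+2)}>0$, so that $\lambda+x=2(a+u)$. This gives
\[
I=2^{D+n+3}\int_0^1(a+u)^n u^{d_0}(1-u)^{d_\infty+1}\Bigl(u-\tfrac{d_0+1}{D+2}\Bigr)\,du .
\]
In this notation the right-hand side of \eqref{eq1} is exactly $a$, so the hypothesis reads $a\le \frac{n(d_\infty+1)-(d_0+1)}{D+4}$.

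Second, I use the same integration-by-parts trick as before, now with the exponents of $u$ and $1-u$ shifted by one. A direct computation gives
\[
-\bigl(u^{d_0+1}(1-u)^{d_\infty+2}\bigr)'=u^{d_0}(1-u)^{d_\infty+1}\bigl((D+3)u-(d_0+1)\bigr),
\]
and
\[
u-\tfrac{d_0+1}{D+2}=\tfrac{(D+3)u-(d_0+1)}{D+3}-\tfrac{d_0+1}{(D+2)(D+3)} .
\]
Integrating by parts (the boundary terms vanish) yields, up to the positive factor $2^{D+n+3}/((D+2)(D+3))$,
\[
I\;\propto\; n(D+2)\int_0^1(a+u)^{n-1}u^{d_0+1}(1-u)^{d_\infty+2}\,du-(d_0+1)\int_0^1(a+u)^{n}u^{d_0}(1-u)^{d_\infty+1}\,du .
\]
In the second integral I split $(a+u)^n=(a+u)^{n-1}u+a(a+u)^{n-1}$. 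Then I expand $(a+u)^{n-1}$ binomially, as in Proposition~\ref{nonvanishing_Futaki}, and write everything in Beta functions $B(p,q)$ with $p=j+d_0+2$, $q=d_\infty+2$. Using $B(p,q+1)=\frac{q}{p+q}B(p,q)$ and $B(p-1,q)=\frac{p+q-1}{p-1}B(p,q)$, the coefficient of $\binom{n-1}{j}a^{n-1-j}$ becomes
\[
B(p,q)\Bigl[\tfrac{n(D+2)(d_\infty+2)}{p+q}-(d_0+1)-(d_0+1)\,a\,\tfrac{p+q-1}{p-1}\Bigr].
\]

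Third, I show that the resulting sum is positive. The hypothesis \eqref{eq1}, an upper bound on $a$ with denominator $D+4=p+q$ at $j=0$, is designed to make the $j=0$ bracket nonnegative. The main obstacle is that the bracket decreases in $j$, because $p+q$ grows, so for large $j$ the individual terms can be negative. Termwise positivity is therefore not enough. To handle this I would try two things, in this order.

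1. Regard the expression as the integral of
\[
\varphi(u)=n(D+2)u(1-u)-(d_0+1)(a+u)
\]
against the positive measure $(a+u)^{n-1}u^{d_0}(1-u)^{d_\infty+1}du$. I would check that \eqref{eq1} makes $\int\varphi$ nonnegative against the untilted Beta weight. I would then try to absorb the increasing tilt $(a+u)^{n-1}$ with a Chebyshev-type rearrangement argument, using where $\varphi$ changes sign.
2. Failing that, pair the negative high-$j$ terms with the positive low-$j$ terms via the recursion between consecutive Beta values, using that the extra power of $a$ only helps the lower terms.

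I expect this sign-balancing step to be the crux. The rest is the routine bookkeeping of Proposition~\ref{nonvanishing_Futaki}.
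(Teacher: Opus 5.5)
Your Steps 1 and 2 are correct: the substitution, the integration by parts against $-(u^{d_0+1}(1-u)^{d_\infty+2})'$, and the Beta coefficients all check out. You are also right that the individual $j$-terms genuinely have mixed sign. But the argument stops exactly where the content of the lemma lies. Step 3 is only a list of strategies, and the first one cannot work as stated.

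Your multiplier $\varphi(u)=n(D+2)u(1-u)-(d_0+1)(a+u)$ has $\varphi(0)=-(d_0+1)a<0$ and $\varphi(1)=-(d_0+1)(1+a)<0$. So it changes sign twice, with pattern $-,+,-$. The standard Chebyshev/single-crossing argument upgrades $\int\varphi\,d\mu\ge0$ to $\int\varphi h\,d\mu\ge0$ for increasing $h$ only when $\varphi$ crosses zero once, from $-$ to $+$. Here the increasing tilt $h=(a+u)^{n-1}$ puts extra mass precisely on the negative lobe near $u=1$, so nothing follows. Equivalently, the ratio $u(1-u)/(a+u)$ of the two measures you are comparing vanishes at both ends and is not monotone. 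The pairing idea is not specified enough to assess. You also never address strictness, although the lemma claims $M_X>1$, i.e.\ $I>0$ rather than $I\ge0$.

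The missing idea is to integrate by parts differently, so that the multiplier becomes \emph{linear} against the heavier weight $t^{d_0}(1-t)^{d_\infty+2}$. The paper uses
\[
B(j+d_0+2,d_\infty+2)-\tfrac{d_0+1}{D+2}\,B(j+d_0+1,d_\infty+2)=\tfrac{(d_\infty+1)j-(d_0+1)}{(D+2)(d_\infty+2)}\,B(j+d_0+1,d_\infty+3).
\]
Resumming with $\sum_j j\binom{n}{j}a^{n-j}t^j=nt(a+t)^{n-1}$ shows that $I$ is a positive multiple of $(n(d_\infty+1)-(d_0+1))K-(d_0+1)aL$. Here $K=\int_0^1 t\,(a+t)^{n-1}t^{d_0}(1-t)^{d_\infty+2}\,dt$ and $L=\int_0^1 (a+t)^{n-1}t^{d_0}(1-t)^{d_\infty+2}\,dt$. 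In other words, $I$ is a multiple of the integral of the linear function $(n(d_\infty+1)-(d_0+1))t-(d_0+1)a$ against this tilted weight.

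This multiplier crosses zero once, with positive slope, since \eqref{eq1} together with $a>0$ forces $n(d_\infty+1)>d_0+1$. So the Chebyshev argument you wanted now applies. For the increasing tilt $(a+t)^{n-1}$ it gives $K/L\ge B(d_0+2,d_\infty+3)/B(d_0+1,d_\infty+3)=(d_0+1)/(D+4)$. Hence the integral is at least $(d_0+1)L\bigl[\frac{n(d_\infty+1)-(d_0+1)}{D+4}-a\bigr]$, which is nonnegative precisely by \eqref{eq1}.

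Strictness then follows in two cases:
\begin{itemize}
\item For $n\ge2$ the tilt is strictly increasing, so Chebyshev's inequality is strict.
\item For $n=1$ the Fano condition forces $k=1$ and $d_0=0$. Then \eqref{eq1} reads $d_\infty^2+d_\infty-4\ge0$, which is never an equality for an integer $d_\infty$.
\end{itemize}
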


\begin{proof}
Define $b \in \mathbf{R}$ by 
\[
b \coloneqq \frac{d_0+1}{d_0+d_\infty+2}. 
\]
Then $b > 0$, and 
  \[
  w+1
  =\frac{2(d_0+1)}{d_0+d_\infty+2}=2b. 
  \]
Moreover, 
\begin{align}\label{I_expression}
\begin{split}
  I
  &=\int_0^1(\lambda+2u-1)^{n}(2u)^{d_0}(2-2u)^{d_\infty+1}(2u-(w+1))2\,du\\
  &=2^{d_0+d_\infty+n+3}\int_0^1(a+u)^nu^{d_0}(1-u)^{d_\infty+1}(u-b)\,du
\end{split}
\end{align}
holds, where $a$ is defined in the proof of Proposition \ref{nonvanishing_Futaki}. 
Since 
\begin{align*}
  &\int_0^1(a+u)^nu^{d_0}(1-u)^{d_\infty+1}(u-b)du\\
  &=\sum_{j=0}^n\binom{n}{j}a^{n-j}\left[\int_0^1u^{j+d_0+1}(1-u)^{d_\infty + 1}du-b\int_0^1 u^{j+d_0}(1-u)^{d_\infty + 1}du\right]\\
  &=\sum_{j=0}^n\binom{n}{j}a^{n-j}[B(j+d_0+2,d_\infty+2)-bB(j+d_0+1,d_\infty+2)]
\end{align*}
and 
\begin{align*}
  &B(j+d_0+2,d_\infty+2)-bB(j+d_0+1,d_\infty+2) \\
  &=\left[\frac{j+d_0+1}{(j+d_0+1)+(d_\infty+2)}-\frac{d_0+1}{d_0+d_\infty+2}\right]B(j+d_0+1,d_\infty+2)\\
  &=\frac{(d_\infty+1)j-(d_0+1)}{d_0+d_\infty+2}\frac{B(j+d_0+1,d_\infty+2)}{j+d_0+d_\infty+3}\\
  &=\frac{(d_\infty+1)j-(d_0+1)}{d_0+d_\infty+2}\frac{B(j+d_0+1,d_\infty+3)}{d_\infty+2}, 
\end{align*}
we have 
 \[
 I=\frac{2^{d_0+d_\infty+n+3}}{(d_0+2)(d_0+d_\infty+2)}\sum_{j=0}^n\binom{n}{j}a^{n-j}[(d_\infty+1)j-(d_0+1)]B(j+d_0+1,d_\infty+3). 
 \]
Therefore, if we set 
  \[
  S_0\coloneq\sum_{j=0}^n\binom{n}{j}a^{n-j}B(d_0+1+j,d_\infty+3),\quad S_1\coloneq\sum_{j=0}^nj\binom{n}{j}a^{n-j}B(d_0+1+j,d_\infty+3), 
  \]
then, since 
  \[
 I=\frac{2^{d_0+d_\infty+n+3}}{(d_0+2)(d_0+d_\infty+2)}[(d_\infty+1)S_1-(d_0+1)S_0]
 \] 
and $S_0>0$, the condition $I\ge0$ (resp. $I = 0$) is equivalent to 
 \begin{align}\label{equiv_I_nneg_S_0_S_1}
 \frac{S_1}{S_0}\ge\frac{d_0+1}{d_\infty+1}\quad \left(\text{resp. } \frac{S_1}{S_0} = \frac{d_0+1}{d_\infty+1}
\right). 
 \end{align}
Since 
\begin{align*}
  S_0
  &=\sum_{j=0}^n\binom{n}{j}a^{n-j}\int_0^1t^{j+d_0}(1-t)^{d_\infty+2}dt\\
  &=\int_0^1t^{d_0}(1+t)^{d_\infty+2}(a+t)^ndt\\
  &=a\int_0^1t^{d_0}(1+t)^{d_\infty+2}(a+t)^{n-1}dt+\int_0^1t^{d_0+1}(1+t)^{d_\infty+2}(a+t)^{n-1}dt, 
\end{align*}
using 
\begin{align*}
  nx(x+y)^{n-1}
  &=x\frac{\partial}{\partial x}(x+y)^n
  =\sum_{j=0}^nj\binom{n}{j}x^jy^{n-j}, 
\end{align*}
we obtain 
 \begin{align*}
  S_1
  &=\sum_{j=0}^nj\binom{n}{j}a^{n-j}\int_0^1t^{d_0+j}(1-t)^{d_\infty+2}dt\\
  &=\int_0^1t^{d_0}(1-t)^{d_\infty+2}nt(a+t)^{n-1}dt\\
  &=n\int_0^1t^{d_0+1}(1-t)^{d_\infty+2}(a+t)^{n-1}dt. 
\end{align*}
Now let us set 
 \[
 K\coloneq\int_0^1t^{d_0+1}(1-t)^{d_\infty+2}(a+t)^{n-1}dt,\quad L\coloneq\int_{0}^1t^{d_0}(1-t)^{d_\infty+2}(a+t)^{n-1}dt. 
 \]
Then $K, L > 0$, and 
 \[
 \frac{S_1}{S_0}=\frac{nK}{aL+K} 
 \]
Define $f\colon(0,+\infty)\to\mathbf{R}$ by 
\[
f(t)\coloneq\frac{nt}{aL+t}. 
\]
Then, since $aL >0$, $f$ is a strictly increasing function. 
Next, if we set 
 \[
 g(t)\coloneq t^{d_0}(1-t)^{d_\infty+2},\quad h(t)\coloneq(a+t)^{n-1}, 
 \]
then 
 \begin{align*}
  &K=\int_0^1tg(t)h(t)\,dt,\quad L=\int_0^1g(t)h(t)\,dt, \\
  &\int_0^1g(t)\,dt=B(d_0+1,d_\infty+3),\quad \int_0^1tg(t)\,dt=B(d_0+2,d_\infty+3)
 \end{align*}
hold. 
Here, we investigate the sign of 
\[
B(d_0+1,d_\infty+3)K-B(d_0+2,d_\infty+3)L. 
\]
 \begin{itemize}
   \item In the case $n=1$. \par
     Since $h(t)=1$, we have $K=B(d_0+2,d_\infty+3), L=B(d_0+1,d_\infty+3)$ and hence $B(d_0+1,d_\infty+3)K-B(d_0+2,d_\infty+3)L=0$. 
     \item In the case $n\ge2$. \par
     Since $h'(t)=(n-1)(a+t)^{n-2} > 0 $, $h$ is strictly increasing on $[0,1]$, and 
 \begin{align*}
   &B(d_0+1,d_\infty+3)K-B(d_0+2,d_\infty+3)L\\
   &=\int_0^1tg(t)h(t)\,dt\int_0^1g(t)\,dt-\int_0^1g(t)h(t)dt\int_0^1tg(t)\,dt\\
   &=\int_0^1tg(t)h(t)\,dt\int_0^1g(s)\,ds-\int_0^1g(t)h(t)\,dt\int_0^1sg(s)\,ds\\
   &=\int_0^1\int_0^1(t-s)h(t)g(s)g(t)\,dsdt
 \end{align*}
 holds. 
 By a change of variables, we have 
 \[
   \int_0^1\int_0^1(t-s)h(t)g(s)g(t)\,dsdt=\int_0^1\int_0^1(s-t)h(s)g(t)g(s)\,dsdt
 \]
 and 
 \begin{align*}
   &B(d_0+1,d_\infty+3)K-B(d_0+2,d_\infty+3)L\\
   &=\frac{1}{2}\int_0^1\int_0^1((t-s)h(t)+(s-t)h(s))g(s)g(t)\,dsdt\\
   &=\frac{1}{2}\int_0^1\int_0^1(h(s)-h(t))(s-t)g(s)g(t)\,dsdt. 
 \end{align*} 
Therefore, since $(h(s)-h(t))(s-t)\ge0$, we have $B(d_0+1,d_\infty+3)K - B(d_0+2,d_\infty+3)L \ge0$. 
 \end{itemize}
From the above, 
\[
K \ge \frac{B(d_0+2,d_\infty+3)}{B(d_0+1,d_\infty+3)} L = \frac{d_0+1}{d_0+d_\infty+4}L
 \]
holds, and so, by the monotonicity of $f$, 
 \[
 \frac{S_1}{S_0}=\frac{nK}{aL+K}\ge\frac{n(d_0+1)}{a(d_0+d_\infty+4) + d_0+1}
 \]
follows. 
Therefore, if \eqref{eq1} holds, then 
 \begin{align*}
 \frac{n(d_0+1)}{a(d_0+d_\infty+4)+(d_0+1)}\ge\frac{d_0+1}{d_\infty+1}, 
 \end{align*}
and so \eqref{equiv_I_nneg_S_0_S_1} follows. 
In particular, since $I\ge0$, we have $M_{X} \geq 1$. 
Furthermore, the following holds. 
\begin{enumerate}[(1)]
  \item In the case $n=1$. \par
    Since $k(d_0+1)<n+1$, we have $n=k=1$ and $d_0=0$.
 Hence, it holds 
    \[
    a=\frac{n+1 - k(d_0+1)}{k(d_0+d_\infty+2)}=\frac{1}{d_\infty+2} 
    \]
    and $B(d_0+1,d_\infty+3)K-B(d_0+2,d_\infty+3)L=0$. 
    In particular, we have 
    \[
    K = \frac{d_{0}+1}{d_{0}+d_{\infty}+4}L. 
    \]
    Here, if we assume $M_{X} = 1$, then $I = 0$, and so by \eqref{equiv_I_nneg_S_0_S_1} we obtain 
    \begin{align*}
    \frac{d_0+1}{d_\infty+1}=\frac{S_1}{S_0}=\frac{nK}{aL+K}=\frac{n(d_0+1)}{a(d_0+d_\infty+4)+(d_0+1)}. 
    \end{align*}
    Rearrangement of this gives us 
    \begin{align*}
    d_\infty^2+d_\infty-4=0, 
    \end{align*}
    which contradicts $d_{\infty} \in \mathbf{Z}_{\geq 0}$. 
    Therefore, $M_{X} > 1$. 
    \item In the case $n\ge2$. \par
    Since $h$ is a strictly increasing function, $s = t$ is the only possibility for $s, t \in [0, 1]$ satisfying $(h(s)-h(t))(s-t)=0$. 
    Hence the set on which the integrand vanishes has measure zero in $[0,1]\times[0,1]$, and therefore 
    \begin{align*}
   &B(d_0+1,d_\infty+3)K-B(d_0+2,d_\infty+3)L \\
   &= \frac{1}{2}\int_0^1\int_0^1(h(s)-h(t))(s-t)g(s)g(t)dsdt > 0
 \end{align*} 
and hence 
    \[
    \frac{S_1}{S_0}=\frac{nK}{aL+K}>\frac{n(d_0+1)}{a(d_0+d_\infty+4)+(d_0+1)}\ge\frac{d_0+1}{d_\infty+1}. 
    \]
    This and the equivalence in \eqref{equiv_I_nneg_S_0_S_1} gives us that $M_{X} > 1$. 
\end{enumerate}
From the above, we see that $M_X>1$ holds in either case. 
\end{proof}

\subsection{The case $k \geq 2$ or $d_{\infty} \geq 1$}

We first show a non-existence result for Mabuchi solitons. 

\begin{proposition}
If $k\ge2$, or $k=1$ and $d_\infty\ge1$, then $M_{X} > 1$ holds, except in the cases $(n,k,d_0,d_\infty)=(1,1,0,1)$ and $(n,k,d_0,d_\infty)= (2,1,0,1)$. In particular, $X$ does not admit a Mabuchi soliton.
\end{proposition}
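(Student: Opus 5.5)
The plan is to reduce everything to Lemma \ref{inequity}: it suffices to verify inequality \eqref{eq1} in every case with $k\ge 2$, or with $k=1$ and $d_\infty\ge 1$, apart from the two listed exceptions. Write $D\coloneqq d_0+d_\infty+2$. Clearing the positive denominators, \eqref{eq1} becomes
\[
k D\bigl(n(d_\infty+1)-(d_0+1)\bigr)\ \ge\ (D+2)\bigl(n+1-k(d_0+1)\bigr).
\]
I will treat this as a linear inequality in $n$ and compare the coefficients of $n$ on the two sides, keeping in mind the Fano constraint $n+1>k(d_0+1)$.

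\textbf{Case $k\ge 2$.} The coefficient of $n$ on the left is $kD(d_\infty+1)\ge 2D$, while on the right it is $D+2\le 2D$, since $D\ge 2$. So the left side grows at least as fast in $n$. It therefore suffices to check the inequality at the smallest admissible value $n=k(d_0+1)$. There the right side equals $D+2$, and the left side equals $kD(d_0+1)(k(d_\infty+1)-1)\ge 2D\ge D+2$. The claim follows.

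\textbf{Case $k=1$, $d_\infty\ge1$.} Now the Fano condition reads $n\ge d_0+1$. The inequality becomes
\[
D\bigl(n(d_\infty+1)-(d_0+1)\bigr)\ge (D+2)(n-d_0).
\]
Here the coefficient of $n$ on the left is $D(d_\infty+1)\ge 2D\ge D+2$, so again it suffices to check $n=d_0+1$. At that value the left side is $D(d_0+1)d_\infty$ and the right side is $D+2$. This holds unless $(d_0+1)d_\infty D<D+2$, i.e. it holds whenever $(d_0+1)d_\infty\ge 2$, since then $2D\ge D+2$. The remaining possibility $d_0=0$, $d_\infty=1$ gives $D=3$; there the inequality reads $3(2n-1)\ge 5n$, i.e. $n\ge3$. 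So the only failures are $n=1,2$ with $(k,d_0,d_\infty)=(1,0,1)$, which are exactly the excluded cases. (The value $n=1$ is allowed here since $n\ge d_0+1=1$.)

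The main subtlety is the slope comparison. One must justify that $kD(d_\infty+1)\ge D+2$ holds in both cases, so that monotonicity in $n$ reduces everything to the minimal $n$. One must also confirm that there are no further boundary cases, such as $D=2$ with $d_\infty=0$, $k\ge2$: there the slope on the left is $2k\cdot 2\ge 8>4$, so the argument goes through. Once \eqref{eq1} is established, Lemma \ref{inequity} gives $M_X>1$, and Corollary \ref{existence_adm_M-sol} (equivalently Example \ref{mc_obst}) gives the nonexistence of a Mabuchi soliton.
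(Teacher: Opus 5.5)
Your proposal is correct and takes essentially the paper's route: you rewrite \eqref{eq1} as a linear inequality in $n$ with nonnegative slope and check it at a base point. The only difference is that you evaluate at the minimal Fano value $n=k(d_0+1)$ rather than at $n=1$, which collapses the right-hand side to $D+2$ and isolates the bad case $(k,d_0,d_\infty)=(1,0,1)$ directly.

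One small slip in your closing remark: for $d_0=d_\infty=0$ the left slope is $kD(d_\infty+1)=2k\ge 4=D+2$, not $\ge 8$, with equality at $k=2$, where \eqref{eq1} becomes an identity. This does no harm, because your main argument uses only the non-strict slope comparison and Lemma \ref{inequity} needs only the non-strict form of \eqref{eq1}.
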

\begin{proof}
First note that the inequality \eqref{eq1} in Lemma \ref{inequity} is equivalent to 
  \begin{align}\label{eq2_eq1_equiv}
  \left[(d_\infty+1)(d_0+d_\infty+2)-\frac{d_0+d_\infty+4}{k}\right]n-\frac{d_0+d_\infty+4}{k}+2(d_0+1)\ge 0. 
  \end{align}
Define $f\colon[1,+\infty)\to\mathbf{R}$ by 
  \[
  f(t)\coloneq\left[(d_\infty+1)(d_0+d_\infty+2)-\frac{d_0+d_\infty+4}{k}\right]t-\frac{d_0+d_\infty+4}{k}+2(d_0+1). 
  \]
  \begin{enumerate}[(1)]
    \item The case $k\ge 2$. \par  
    In this case, we have 
    \begin{align*}
      &(d_\infty+1)(d_0+d_\infty+2)-\frac{d_0+d_\infty+4}{k} \\
      &\ge(d_\infty+1)(d_0+d_\infty+2)-\frac{d_0+d_\infty+4}{2} \\
      &=\left(d_\infty+\frac{1}{2}\right)(d_0+d_\infty+2)-1 \\
      &\ge 0. 
    \end{align*}
    Moreover, equality holds if and only if $(k,d_0,d_\infty)=(2,0,0)$. 
    \begin{enumerate}[(a)]
      \item The case $(k,d_0,d_\infty)=(2,0,0)$. \par
      Since 
        \[
          f(t)=\left(1\cdot2-\frac{4}{2}\right)t-\frac{4}{2}+2=0, 
        \]
        the inequality (\ref{eq1}) holds for any $n \in \mathbf{Z}_{\geq 1}$. 
        \item Cases other than (a). \par
        Since $k\ge2$, we have
        \begin{align*}
          f(1)&=(d_\infty+1)(d_0+d_\infty+2)-\frac{2(d_0+d_\infty+4)}{k}+2(d_0+1)\\
          &\ge(d_\infty+1)(d_0+d_\infty+2)-(d_0+d_\infty+4)+2(d_0+1)\\
          &=d_\infty(d_0+d_\infty+2)+2d_0 \\
          &\ge 0. 
        \end{align*}
        Therefore, since $n\ge2$ and $f$ is monotonically increasing, the inequality \eqref{eq1} is maintained by $f(n) \geq f(1)\ge 0$. 
    \end{enumerate}
      \item The case $k=1$. \par
      First note that 
      \[
        (d_\infty+1)(d_0+d_\infty+2)-(d_0+d_\infty+4)=d_\infty(d_0+d_\infty+2)-2>0
      \]
      and hence $f$ is a monotonically increasing function. 
      \begin{enumerate}[(a)]
        \item The case $d_0\ge 1$. \par 
      Since 
      \begin{align*}
        f(n) > f(1)&=(d_\infty+1)(d_0+d_\infty+2)-2(d_0+d_\infty+4)+2(d_0+1)\\
        &=(d_\infty-1)(d_0+d_\infty+2)+2(d_0-1)\ge 0, 
      \end{align*}
      the inequality (\ref{eq1}) holds. 
      \item The case $d_0=0$. \par
      If $d_\infty\ge 2$, then the inequality \eqref{eq1} holds from $f(1)\ge 3\cdot(0+2+2)-2>0$. 
      If $d_\infty=1$, then 
      \[
      f(3)=3\left((1+1)(0+1+2)-\frac{0+1+4}{1}\right)-\frac{0+1+4}{1}+2(0+1)=0, 
      \]
      and so, except in the cases $(n,d_0,d_\infty)=(1,0,1)$ and $(n,d_0,d_\infty)=(2,0,1)$, the inequality \eqref{eq1} holds. 
      \end{enumerate}
  \end{enumerate}
Therefore, if $k\ge2$, or $k=1$ and $d_\infty\ge1$, then $M_{X} > 1$ holds, except in the cases $(n,k,d_0,d_\infty)=(1,1,0,1)$ and $(n,k,d_0,d_\infty) = (2,1,0,1)$. In particular, $X$ does not admit a Mabuchi soliton.
\end{proof}

\begin{proposition}
If $(n,k,d_0,d_\infty)=(1,1,0,1)$, then $X$ admits a Mabuchi soliton. 
On the other hand, when $(n,k,d_0,d_\infty)=(2,1,0,1)$, $M_{X} > 1$ and hence $X$ does not admit a Mabuchi soliton. 
\end{proposition}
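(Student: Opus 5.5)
The plan is to decide the sign of $I$ directly in the two remaining cases, using the reduction already established. By Proposition \ref{nonvanishing_Futaki} we have $b_{1}-wb_{0}>0$. We also have $b_{0}>0$ and $b_{0}b_{2}-b_{1}^{2}>0$. Hence $M_{X}<1$, $M_{X}=1$ and $M_{X}>1$ correspond to $I<0$, $I=0$ and $I>0$ respectively. By Corollary \ref{existence_adm_M-sol}, $X$ admits a Mabuchi soliton exactly when $I<0$. I will evaluate the expression \eqref{I_expression},
\[
I = 2^{d_0+d_\infty+n+3}\int_0^1(a+u)^n u^{d_0}(1-u)^{d_\infty+1}(u-b)\,du,
\]
with $a=\frac{n+1-k(d_0+1)}{k(d_0+d_\infty+2)}$ and $b=\frac{d_0+1}{d_0+d_\infty+2}$. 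Only the sign matters, so the power of $2$ can be dropped.

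Both cases have $k=1$, $d_0=0$ and $d_\infty=1$, so $b=1/3$ and the weight is $(1-u)^2$. Its moments are
\[
m_j\coloneqq\int_0^1u^j(1-u)^2\,du=B(j+1,3)=\frac{2}{(j+1)(j+2)(j+3)},
\]
which gives $m_0=1/3$, $m_1=1/12$, $m_2=1/30$ and $m_3=1/60$. In each case I will expand $(a+u)^n(u-1/3)$ as a polynomial and integrate term by term against these moments.

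For $(n,k,d_0,d_\infty)=(1,1,0,1)$ we have $a=1/3$. The integrand polynomial is $(u+1/3)(u-1/3)=u^2-1/9$. The integral is therefore $m_2-m_0/9=\frac{1}{30}-\frac{1}{27}<0$. Hence $I<0$, so $M_X<1$, and a Mabuchi soliton exists.

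For $(n,k,d_0,d_\infty)=(2,1,0,1)$ we have $a=2/3$. The expansion is $(u+2/3)^2(u-1/3)=u^3+u^2-\frac{4}{27}$, since the linear terms cancel. The integral is $m_3+m_2-\frac{4}{27}m_0=\frac{1}{60}+\frac{1}{30}-\frac{4}{81}=\frac{1}{20}-\frac{4}{81}=\frac{1}{1620}>0$. Hence $I>0$, so $M_X>1$, and no Mabuchi soliton exists.

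The only delicate point is the second case. The margin there is tiny ($81$ versus $80$), which is exactly why Lemma \ref{inequity} could not handle it, so that arithmetic has to be checked carefully.
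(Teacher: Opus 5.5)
Your proposal is correct and follows essentially the paper's proof. Both compute $I$ explicitly in the two cases and then use $M_X = 1 + b_0 I/(b_0b_2-b_1^2)$ together with Corollary~\ref{existence_adm_M-sol}. The paper integrates the expanded polynomial in $x$ over $[-1,1]$, whereas you work with the substituted form \eqref{I_expression} and the beta moments; your values $-1/270$ and $1/1620$, multiplied by $2^{5}$ and $2^{6}$, reproduce the paper's $-16/135$ and $16/405$.
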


\begin{proof}
First, assume that $(n,k,d_0,d_\infty)=(1,1,0,1)$. 
Then, since 
    \[
    \lambda=\frac{2(1+1)/1+1-0}{0+1+2}=\frac{5}{3},\quad w=\frac{0-1}{0+1+2}=-\frac{1}{3}, 
    \]
we have 
    \begin{align*}
      I&=\int_{-1}^1\left(\frac{5}{3}+x\right)(1-x)^2\left(x+\frac{1}{3}\right)dx
      =\int_{-1}^1\left(x^4-\frac{22}{9}x^2+\frac{8}{9}x+\frac{5}{9}\right)dx\\
      &=\left\lbrack\frac{1}{5}x^5-\frac{22}{27}x^3+\frac{5}{9}x\right\rbrack_{-1}^1=-\frac{16}{135}<0. 
    \end{align*}
In particular, $M_{X} < 1$ and $X$ admits a Mabuchi soliton by Corollary \ref{existence_adm_M-sol}. 
Next, if $(n,k,d_0,d_\infty)=(2,1,0,1)$, then, since 
    \[
    \lambda=\frac{2(2+1)/1+1-0}{0+1+2}=\frac{7}{3},\quad w=\frac{0-1}{0+1+2}=-\frac{1}{3}, 
    \]
we have 
    \begin{align*}
      I&=\int_{-1}^1\left(\frac{7}{3}+x\right)^2(1-x)^2\left(x+\frac{1}{3}\right)dx\\
      &=\int_{-1}^1\left(x^5+3x^4-2x^3-\frac{194}{27}x^2+\frac{91}{27}x+\frac{49}{27}\right)dx\\
      &=\left\lbrack\frac{3}{5}x^5-\frac{194}{81}x^3+\frac{49}{27}x\right\rbrack_{-1}^1 \\
      &=\frac{16}{405} \\
     &> 0. 
    \end{align*}
This shows $M_{X} > 1$ and that $X$ does not admit a Mabuchi soliton. 
\end{proof}

\subsection{The case $k=1$ and $d_\infty=0$}

Next we show a result of existence for Mabuchi solitons. 
The following proposition completes the proof of Theorem \ref{M-sol_adm/P^n}. 

\begin{proposition}
If $k=1$ and $d_\infty=0$, then $X$ admits a Mabuchi soliton. 
\end{proposition}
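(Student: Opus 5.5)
By Corollary \ref{existence_adm_M-sol} together with Proposition \ref{nonvanishing_Futaki}, it suffices to show that $M_X<1$. With the notation of the previous subsection, this is equivalent to $I<0$. So the plan is to prove the strict inequality $I<0$ under the hypotheses $k=1$ and $d_\infty=0$.

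\textbf{Reduction to a one-variable integral.} Here the Fano condition reads $n+1>d_0+1$, so $n\ge d_0+1$. The constants become
\[
a=\frac{n-d_0}{d_0+2}>0,\qquad b=\frac{d_0+1}{d_0+2}.
\]
By \eqref{I_expression} we need the sign of
\[
J\coloneqq\int_0^1(a+u)^n u^{d_0}(1-u)(u-b)\,du .
\]
I would split $u-b=\bigl(u-\tfrac{d_0+1}{d_0+3}\bigr)-\tfrac{d_0+1}{(d_0+2)(d_0+3)}$. The first piece is taken care of by the identity
\[
\bigl(u^{d_0+1}(1-u)^2\bigr)'=u^{d_0}(1-u)\bigl[(d_0+1)-(d_0+3)u\bigr].
\]
Integrating by parts (the boundary terms vanish), exactly as in the proof of Proposition \ref{nonvanishing_Futaki}, gives
\[
(d_0+3)J=n\int_0^1(a+u)^{n-1}u^{d_0+1}(1-u)^2\,du-\frac{d_0+1}{d_0+2}\int_0^1(a+u)^{n}u^{d_0}(1-u)\,du .
\]
Writing $(a+u)^n=(a+u)^{n-1}(a+u)$ in the second integral, the claim $J<0$ becomes an inequality between integrals against the positive weight $\rho(u)\coloneqq(a+u)^{n-1}u^{d_0}(1-u)$:
\[
n(d_0+2)\int_0^1 u(1-u)\rho\,du<(d_0+1)\int_0^1(a+u)\rho\,du .
\]

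\textbf{Proving the inequality.} The inequality does not hold pointwise in $u$ when $n$ is large, so an averaging argument is needed. My first attempt would be the Beta-function expansion already used in Lemma \ref{inequity}, now specialized to $d_\infty=0$:
\[
J\propto\sum_{j=0}^n\binom{n}{j}a^{n-j}\bigl(j-(d_0+1)\bigr)\frac{2}{(j+d_0+1)(j+d_0+2)(j+d_0+3)} .
\]
Substituting $a=(n-d_0)/(d_0+2)$, I would try to show that this sum is negative. The idea is to pair each positive term ($j>d_0+1$) with a negative term, or to telescope using
\[
\frac{1}{(m)(m+1)(m+2)}=\frac12\Bigl(\frac{1}{m(m+1)}-\frac{1}{(m+1)(m+2)}\Bigr)
\]
combined with Pascal's rule. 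The point is that the exact value of $a$ should force cancellation up to a manifestly negative remainder.

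As a fallback I would use the second-mean-value (Chebyshev) argument of Lemma \ref{inequity}, reversed. One compares the $\rho$-average of $u(1-u)$ with that of $(a+u)$, using that the $\rho$-average of $u$ is pushed toward $1$ by the factor $(a+u)^{n-1}$ only in a controlled way. Concretely, with $\langle\cdot\rangle_\rho$ denoting the $\rho$-average, the bound $\langle u\rangle_\rho\le\frac{n+d_0}{n+d_0+2}$ (obtained by bounding $(a+u)^{n-1}\le (1+a)^{n-1}u^{n-1}(1+a/u)^{\ldots}$-type comparisons with the weight $u^{n-1+d_0}(1-u)$) could then be plugged in.

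\textbf{Sanity checks and the main obstacle.} Before attempting the general argument, I would verify the base cases: $d_0=0$ is Mabuchi's blow-up example, and $n=d_0+1$ is easy because the $j$-sum has few positive terms. The main obstacle is this last step, namely a sharp enough estimate for the moment ratio, uniform in $n$ and $d_0$. The crude bound $\langle u\rangle_\rho<1$ only settles the range $n<2d_0+2$, so the precise value of $a$ must be used in an essential way.
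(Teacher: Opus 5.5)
\textbf{A correct reduction, but the main inequality is never proved.} Your first steps are sound. It suffices to prove $I<0$, your values of $a,b$ are right, and your split of $u-b$ plus the integration by parts correctly turn $I<0$ into
\[
n(d_0+2)\int_0^1u(1-u)\rho\,du<(d_0+1)\int_0^1(a+u)\rho\,du,\qquad \rho=(a+u)^{n-1}u^{d_0}(1-u).
\]
This inequality, however, is the entire content of the proposition, and the proposal does not prove it. The pairing/telescoping of the Beta sum is only announced. The fallback cannot work as stated. The bound $\langle u\rangle_\rho\le\frac{n+d_0}{n+d_0+2}$ is true (Chebyshev against $u^{n-1+d_0}(1-u)$). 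But that is the $a=0$ weight, so the bound tends to $1$ and carries no information about $a$. More seriously, your inequality is not a statement about $m=\langle u\rangle_\rho$ alone. It reads $Cm-D\langle u^2\rangle_\rho<(d_0+1)a$ with $C=n(d_0+2)-(d_0+1)$ and $D=n(d_0+2)$. Completing it with $\langle u^2\rangle_\rho\ge m^2$ leaves the concave quadratic $Cm-Dm^2$. Its vertex $C/2D<\tfrac12$ lies below your upper bound on $m$ (for $d_0\ge1$), so that bound gains nothing. You are then back to the pointwise condition $C^2<4D(d_0+1)a$. One computes $C^2-4D(d_0+1)a=d_0^2n^2+2(d_0+1)(d_0-2)n+(d_0+1)^2\ge0$ for every $d_0\ge1$, so this fails.

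\textbf{The missing step: one more integration by parts.} What you need is an integration by parts that turns the problem into a pure first-moment bound for a single weight, and your identity is one step away from it. In $\int_0^1(a+u)^nu^{d_0}(1-u)\,du$, substitute
\[
u^{d_0}(1-u)=-\tfrac12\bigl(u^{d_0+1}(1-u)^2\bigr)'+\tfrac{d_0+3}{2}u^{d_0}(1-u)^2
\]
and integrate by parts again. This gives the paper's formula
\[
J=\frac{1}{2(d_0+2)}\int_0^1(a+u)^{n-1}u^{d_0}(1-u)^2\bigl[(n-d_0-1)u-a(d_0+1)\bigr]\,du .
\]
This settles $n=d_0+1$ at once. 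For $n>d_0+1$ it reduces the claim to
\[
\langle u\rangle_\sigma<\frac{(n-d_0)(d_0+1)}{(n-d_0-1)(d_0+2)},\qquad \sigma=(a+u)^{n-1}u^{d_0}(1-u)^2.
\]
The paper proves this bound as follows.
\begin{enumerate}[(1)]
\item Expand binomially to write $\langle u\rangle_\sigma=\sum_j r_jf(j)$, where $f(t)=\frac{t+d_0+1}{t+d_0+4}$ is concave and increasing, and $r_j\propto\binom{n-1}{j}a^{n-1-j}B(j+d_0+1,3)$.
\item Jensen's inequality gives $\langle u\rangle_\sigma\le f\bigl(\sum_j r_jj\bigr)$.
\item Since $B(j+d_0+1,3)$ decreases in $j$, a Chebyshev sum inequality bounds $\sum_jr_jj$ by the binomial mean $\frac{n-1}{1+a}=\frac{(n-1)(d_0+2)}{n+2}$. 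This is exactly where the specific value of $a$ enters.
\item The resulting inequality $f\bigl(\frac{(n-1)(d_0+2)}{n+2}\bigr)<\frac{(n-d_0)(d_0+1)}{(n-d_0-1)(d_0+2)}$ is equivalent to $d_0n^2+(d_0^2+12d_0+12)n-4d_0(d_0+1)>0$, which holds because $n>d_0+1$.
\end{enumerate}
Without this estimate, or an equivalent one, your argument is only a reduction.
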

\begin{proof}
Let $a$ and $b$ be real numbers defined in the proofs of Proposition \ref{nonvanishing_Futaki} and Lemma \ref{inequity}, respectively. 
Then, by \eqref{I_expression} in Lemma \ref{inequity}, we have 
  \begin{align*}
    I&=2^{d_0+d_\infty+n+3}\int_0^1(a+u)^nu^{d_0}(1-u)^{d_\infty+1}(u-b)du\\
    &=2^{d_0+n+3}\int_0^1(a+u)^nu^{d_0}(1-u)(u-b)du, \\
    a&=\frac{(n+1)-(d_0+1)}{d_0+2}=\frac{n-d_0}{d_0+2},\quad b=\frac{d_0+1}{d_0+2}. 
  \end{align*}
Here, by setting  
  \[
  \alpha \coloneqq -\frac{1}{2(d_0+2)},\quad \beta \coloneqq -\frac{d_0+1}{2(d_0+2)}, 
  \]
we obtain  
  \[
  u^{d_0}(1-u)(u-b)=\alpha\left(\frac{d}{du}u^{d_0+1}(1-u)^2\right)+\beta u^{d_0}(1-u)^2. 
  \]
Therefore, by integration by parts, 
    \begin{align*}
    I&=2^{d_0+n+3}\int_0^1(a+u)^n\left[\alpha\Bigl(\frac{d}{du}u^{d_0+1}(1-u)^2\Bigr)+\beta u^{d_0}(1-u)^2\right]du\\
    &=2^{d_0+n+3}\left\{\Bigl\lbrack\alpha(a+u)^nu^{d_0+1}(1-u)^2\Bigr\rbrack_0^1-\int_0^1\alpha n(a+u)^{n-1}u^{d_0+1}(1-u)^2du\right\} \\
    &\quad +2^{d_0+n+3}\int_0^1\beta(a+u)^nu^{d_0}(1-u)^2du \\
    &=2^{d_0+n+3}\int_0^1(a+u)^{n-1}u^{d_0}(1-u)^2(-\alpha nu+\beta(a+u))du\\
    &=\frac{2^{d_0+n+2}}{d_0+2}\int_0^1(a+u)^{n-1}u^{d_0}(1-u)^2[(n-d_0-1)u-a(d_0+1)]du. 
  \end{align*}
Thus, when $n-d_0-1=0$, we have 
  \[
  I=-\frac{2^{d_0+n+2}}{d_0+2}\int_0^1(a+u)^{n-1}u^{d_0}(1-u)^2a(d_0+1)du\,<0
  \]
and hence $M_{X} < 1$, which shows that $X$ admits a Mabuchi soliton. 
From now on, assume $n-d_0-1>0$. 
If we set 
  \[
  c\coloneq\frac{a(d_0+1)}{n-d_0-1}=\frac{(n-d_0)(d_0+1)}{(n-d_0-1)(d_0+2)}, 
  \]
we have $c > 0$ and 
  \[
  I=\frac{2^{d_0+n+2}(n-d_0-1)}{d_0+2}\int_0^1(a+u)^{n-1}u^{d_0}(1-u)^2(u-c)\,du. 
  \]
Hence, by setting 
  \[
  J\coloneq\int_0^1(a+u)^{n-1}u^{d_0+1}(1-u)^2du,\quad K\coloneq\int_0^1(a+u)^{n-1}u^{d_0}(1-u)^2du, 
  \]
we have $I = J-cK$, and $I \leq 0$ (resp. $I = 0$) if and only if 
  \begin{align}\label{eq_1_equiv_JK}
  \frac{J}{K} \le c\quad \left(\text{resp. } \frac{J}{K} = c\right). 
  \end{align}
In what follows, we show that $\dfrac{J}{K} < c$. 
First, 
  \begin{align*}
    J
    &=\sum_{j=0}^{n-1}\binom{n-1}{j}a^{n-1-j}\int_0^1u^{j+d_0+1}(1-u)^2du\\
    &=\sum_{j=0}^{n-1}\binom{n-1}{j}a^{n-1-j}B(j+d_0+2,3)\\
    &=\sum_{j=0}^{n-1}\binom{n-1}{j}a^{n-1-j}\frac{2}{(j+d_0+2)(j+d_0+3)(j+d_0+4)}, \\
    K
    &=\sum_{j=0}^{n-1}\binom{n-1}{j}a^{n-1-j}\int_0^1u^{j+d_0}(1-u)^2du\\
    &=\sum_{j=0}^{n-1}\binom{n-1}{j}a^{n-1-j}B(j+d_0+1,3)\\
    &=\sum_{j=0}^{n-1}\binom{n-1}{j}a^{n-1-j}\frac{2}{(j+d_0+1)(j+d_0+2)(j+d_0+3)}. 
  \end{align*}
Hence, by setting 
  \[
  p_j\coloneq\sum_{j=0}^{n-1}\binom{n-1}{j}a^{n-1-j},\quad q_j\coloneq\frac{2}{(j+d_0+1)(j+d_0+2)(j+d_0+3)}
  \]
we have 
  \[
  J=\sum_{j=0}^{n-1}p_jq_j\frac{j+d_0+1}{j+d_0+4},\quad K=\sum_{i=0}^{n-1}p_iq_i. 
  \]
Next, if we set $r_j\coloneq\dfrac{1}{K}p_jq_j$, then $\sum\limits_{j=0}^{n-1}r_j=1$ and 
  \[
  \frac{J}{K} = \sum_{j=0}^{n-1}r_j\frac{j+d_0+1}{j+d_0+4}. 
  \]
Define $f\colon[0, +\infty) \to\mathbf{R}$ by $f(t)\coloneq\dfrac{t+d_0+1}{t+d_0+4}$. 
Then $f$ is a strictly increasing concave function. 
Hence we have 
  \[
  \frac{J}{K}=\sum_{j=0}^{n-1}r_jf(j)\le f\left(\sum_{j=0}^{n-1}r_jj\right). 
  \]
Here, we point out that 
  \[
    \sum_{j=0}^{n-1}r_jj
    \le\frac{\sum\limits_{j=0}^{n-1}p_jj}{\sum\limits_{i=0}^{n-1}p_i} 
  \]
holds. 
In fact, since $p_j \geq 0$ and $q_j$ is monotonically decreasing in $j$, we have 
  \begin{align*}
    \sum_{j=0}^{n-1}p_jq_jj\sum_{i=0}^{n-1}p_i-\sum_{j=0}^{n-1}p_jj\sum_{i=0}^{n-1}p_iq_i
    &=\sum_{j=0}^{n-1}\sum_{i=0}^{n-1}j(q_j-q_i)p_jp_i\\
    &=\sum_{i<j}j(q_j-q_i)p_jp_i+\sum_{j<i}j(q_j-q_i)p_jp_i\\
    &=\sum_{i<j}j(q_j-q_i)p_jp_i+\sum_{i<j}i(q_i-q_j)p_ip_j\\
    &=\sum_{i<j}(j-i)(q_j-q_i)p_jp_i 
    \le 0. 
  \end{align*}
Hence, 
  \[
    \sum_{j=0}^{n-1}r_jj
    = \frac{\sum\limits_{j=0}^{n-1}p_jq_jj}{\sum\limits_{i=0}^{n-1}p_iq_i}
    \le\frac{\sum\limits_{j=0}^{n-1}p_jj}{\sum\limits_{i=0}^{n-1}p_i}. 
  \] 
Furthermore, since 
  \begin{align*}
    &\sum_{i=0}^{n-1}p_i =\sum_{i=0}^{n-1}\binom{n-1}{j}a^{n-1-i}=(1+a)^{n-1}, \\
    &\sum_{j=0}^{n-1}p_jj =\sum_{j=0}^{n-1}\binom{n-1}{j}j\cdot a^{n-1-j}=(n-1)(1+a)^{n-2}
  \end{align*}
and $f$ is monotonically increasing, 
  \[
  \frac{J}{K}\le f\left(\sum_{j=0}^{n-1}r_jj\right)\le f\left(\frac{\sum\limits_{j=0}^{n-1}p_jj}{\sum\limits_{i=0}^{n-1}p_i}\right)=f\left(\frac{n-1}{1+a}\right). 
  \]
Thus, we obtain 
  \begin{align*}
    f\left(\frac{n-1}{1+a}\right)
    &=f\left(\frac{(d_0+2)(n-1)}{n+2}\right) \\
    &=\frac{(2d_0+3)n+d_0}{2(d_0+3)n + d_0+6}
  \end{align*}
and 
  \begin{align}\label{JK_inequality}
  \frac{J}{K} 
  \le\frac{(2d_0+3)n+d_0}{2(d_0+3)n + d_0+6}. 
  \end{align}
follows. 
Finally, let us show 
  \begin{align}\label{JK_sufficient}
  \frac{(2d_0+3)n+d_0}{2(d_0+3)n+(d_0+6)}<c. 
  \end{align}
First, by a direct computation, the condition \eqref{JK_sufficient} is equivalent to 
  \begin{align*}
  d_0n^2 + (d_0^2+12d_0+12)n - 4d_0(d_0+1) > 0. 
  \end{align*}
Moreover, since $d_0+1 < n$, we have in fact 
  \begin{align*}
    d_0n^2 + (d_0^2+12d_0+12)n - 4d_0(d_0+1) 
    &> d_0n^2 + (d_0^2+12d_0+12)n -4d_0n \\
    &= n(d_0n+d_0^2+8d_0+12) \\
    &>0. 
  \end{align*}
Therefore, by \eqref{JK_sufficient} and \eqref{JK_inequality} we obtain 
  \[
  \frac{J}{K}\le\frac{(2d_0+3)n+d_0}{2(d_0+3)n+(d_0+6)} < c. 
  \]
Hence, by \eqref{eq_1_equiv_JK}, we obtain $I < 0$, namely $M_{X} < 1$. 
\end{proof}


\begin{thebibliography}{99}

\bibitem{ACGTF08-3}
V.~Apostolov, D.~M.~J.~Calderbank, P.~Gauduchon and Christina.~W.~T\o nnesen-Friedman, 
Hamiltonian 2-forms in K\"ahler geometry III: Extremal metrics and Stability. 
\textit{Invent. math.} \textbf{173} No 3 (2008), 547--601. 

\bibitem{ACGTF08-4}
V.~Apostolov, D.~M.~J.~Calderbank, P.~Gauduchon and Christina.~W.~T\o nnesen-Friedman,
Hamiltonian 2-forms in K\"ahler geometry IV: Weakly Bochner-flat K\"ahler manifolds. 
\textit{Comm. Anal. Geom.} \textbf{16} (2008), 91--126. 




\bibitem{FM95} A.~Futaki and T.~Mabuchi, 
Bilinear forms and extremal K\"ahler vector fields associated with K\"ahler classes. 
\emph{Math. Ann.} \textbf{301} (1995), 199--210. 


\bibitem{HL23}
J.~Han and C.~Li, 
On the Yau-Tian-Donaldson conjecture for generalized K\"ahler-Ricci soliton equations. 
\emph{Comm. Pure Appl. Math.} \textbf{76} (2023), no. 9, 1793--1867. 

\bibitem{Hi23}
T.~Hisamoto, 
Mabuchi's soliton metric and relative D-stability.  
\emph{Amer. J. Math.} \textbf{145} (2023), no. 3, 765--806.


\bibitem{MTF11}
G.~Maschler and C.~W.~T\o nnesen-Friedman, 
Generalizations of K\"ahler-Ricci solitons on projective bundles. 
\emph{Math. Scand.} \textbf{108} (2011), no. 2, 161--176.
 
\bibitem{Ma01}
T.~Mabuchi, 
K\"ahler-Einstein metrics for manifolds with nonvanishing Futaki character. 
\emph{Tohoku Math. J. (2)} \textbf{53} (2001), no.2, 171--182. 

\bibitem{Ma21}
T. Mabuchi, {\it Test configurations, stabilities and canonical K\"ahler metrics-complex geometry by the energy method}, SpringerBriefs Math. Springer, Singapore (2021). 

\bibitem{NN24} Y.~Nakagawa and S.~Nakamura, 
Multiplier Hermitian-Einstein metrics on Fano manifolds of KSM-type. 
\emph{Tohoku Math. J.} (2) \textbf{76} (2024), no.1, 127--152. 

\bibitem{Nak19}
S.~Nakamura, 
Generalized K\"ahler Einstein metrics and uniform stability for toric Fano manifolds. 
\emph{Tohoku Math. J. (2)} \textbf{71} (2019), no. 4, 525--532. 




\bibitem{Y22a} 
Y.~Yao, 
Relative Ding stability and an obstruction to the existence of Mabuchi solitons. 
\emph{J. Geom. Anal.}, (2022), no. 4, Paper number 105, 51 pp. 

\bibitem{Y22b}
Y.~Yao, 
Mabuchi solitons and relative Ding stability of toric Fano varieties. 
\emph{Int. Math. Res. Not. IMRN}, (2022), no. 24, 19790--19853. 

\end{thebibliography}
\end{document}